\newtheorem{thm}{Theorem}[section]
\newtheorem{lem}[thm]{Lemma}
\newtheorem{cor}[thm]{Corollary}
\newtheorem{prop}[thm]{Proposition}
\theoremstyle{definition}
\newtheorem{example}[thm]{Example}
\theoremstyle{definition}
\newtheorem{examples}[thm]{Examples}
\theoremstyle{definition}
\newtheorem{defn}[thm]{Definition}
\theoremstyle{definition}
\newtheorem{remark}[thm]{Remark}
\newcommand{\mc}[1]{\mathcal{#1}}
\newcommand{\e}[1]{\emph{#1}}
\newcommand{\la}{\langle}
\newcommand{\ra}{\rangle}
\newcommand{\rmv}[1]{}
\newcommand{\hs}{\hskip10pt}
\newcommand{\Amod}{\mathbf{\mathcal{A}\hskip2pt mod}}
\newcommand{\modA}{\mathbf{mod\hskip2pt\mathcal{A}}}
\newcommand{\AmodA}{\mathbf{\mathcal{A}\hskip2pt mod\hskip2pt\mathcal{A}}}
\newcommand{\LG}{VN(G)}
\newcommand{\LO}{L^1(G)}
\newcommand{\LOQ}{L^1(\mathbb{G})}
\newcommand{\LOH}{L^{1}(\mathbb{H})}
\newcommand{\LOQH}{L^1(\widehat{\mathbb{G}})}
\newcommand{\LOQHs}{L^1_*(\widehat{\mathbb{G}})}
\newcommand{\LOQHP}{L^1(\widehat{\mathbb{G}}')}
\newcommand{\LT}{L^2(G)}
\newcommand{\LTQ}{L^2(\mathbb{G})}
\newcommand{\LI}{L^{\infty}(G)}
\newcommand{\LIQ}{L^{\infty}(\mathbb{G})}
\newcommand{\LIQH}{L^{\infty}(\widehat{\mathbb{G}})}
\newcommand{\LIHH}{L^{\infty}(\widehat{\mathbb{H}})}
\newcommand{\LIQHP}{L^{\infty}(\widehat{\mathbb{G}}')}
\newcommand{\BH}{\mc{B}(H)}
\newcommand{\Th}{\mc{T}(H)}
\newcommand{\LUC}{\mathrm{LUC}(\mathbb{G})}
\newcommand{\BLT}{\mc{B}(L^2(G))}
\newcommand{\BLTQ}{\mc{B}(L^2(\mathbb{G}))}
\newcommand{\KLTQ}{\mc{K}(L^2(\mathbb{G}))}
\newcommand{\TCQ}{\mc{T}(L^2(\mathbb{G}))}
\newcommand{\CBTCrr}{\mc{CB}_{\mc{T}_{\rhd}}(\mc{B}(L^2(\mathbb{G})))}
\newcommand{\Mcb}{M_{cb}A(G)}
\newcommand{\McbQl}{M_{cb}^l(L^1(\mathbb{G}))}
\newcommand{\QcbQl}{Q_{cb}^l(L^1(\mathbb{G}))}
\newcommand{\McbQHl}{M_{cb}^l(L^1(\widehat{\mathbb{G}}))}
\newcommand{\McbQr}{M_{cb}^r(L^1(\mathbb{G}))}
\newcommand{\QcbQr}{Q_{cb}^r(L^1(\mathbb{G}))}
\newcommand{\vphi}{\varphi}
\newcommand{\Lphi}{\Lambda_\varphi}
\newcommand{\lm}{\lambda}
\newcommand{\Lm}{\Lambda}
\newcommand{\Gam}{\Gamma}
\newcommand{\om}{\omega}
\newcommand{\Om}{\Omega}
\newcommand{\ten}{\otimes}
\newcommand{\oten}{\overline{\otimes}}
\newcommand{\hten}{\widehat{\otimes}}
\newcommand{\id}{\textnormal{id}}
\newcommand{\h}[1]{\widehat{#1}}
\newcommand{\wh}[1]{\widehat{#1}}
\newcommand{\sq}{\square}
\newcommand{\Irr}{\mathrm{Irr}(\mathbb{G})}
\newcommand{\ep}{\varepsilon}
\providecommand{\norm}[1]{\lVert#1\rVert}
\newcommand{\A}{\mathcal{A}}
\newcommand{\G}{\mathbb{G}}
\newcommand{\F}{\mathbb{F}}
\newcommand{\Hb}{\mathbb{H}}
\newcommand{\C}{\mathbb{C}}
\newcommand{\N}{\mathbb{N}}
\newcommand{\Z}{\mathbb{Z}}
\newcommand{\R}{\mathbb{R}}
\newcommand{\Q}{\mathbb{Q}}
\DeclareSymbolFont{lettersA}{U}{txmia}{m}{it}
\DeclareMathSymbol{\W}{\mathord}{lettersA}{151}
\begin{document}

\title[Inner amenability and approximation properties of LCQG]{Inner amenability and approximation properties of locally compact quantum groups}
\author{Jason Crann}
\email{jason.crann@carleton.ca}
\address{School of Mathematics and Statistics, Carleton University, Ottawa, ON, Canada H1S 5B6}

\keywords{Locally compact quantum groups; inner amenability; approximation properties.}
\subjclass[2010]{Primary 22D35; Secondary 46M10, 46L89.}

\begin{abstract} We introduce an appropriate notion of inner amenability for locally compact quantum groups, study its basic properties, related notions, and examples arising from the bicrossed product construction. We relate these notions to homological properties of the dual quantum group, which allow us to generalize a well-known result of Lau--Paterson \cite{LP1}, resolve a recent conjecture of Ng--Viselter \cite{NV}, and prove that, for inner amenable quantum groups $\G$, approximation properties of the dual operator algebras can be averaged to approximation properties of $\G$. Similar homological techniques are used to prove that $\ell^1(\G)$ is not relatively operator biflat for any non-unimodular discrete quantum group $\G$; a unimodular discrete quantum group $\G$ with Kirchberg's factorization property is weakly amenable if and only if $L^1_{cb}(\h{\G})$ is operator amenable, and amenability of a locally compact quantum group $\G$ implies $C_u(\h{\G})\cong \LOQH\hten_{\LOQH}C_0(\h{\G})$ completely isometrically. The latter result allows us to partially answer a conjecture of Voiculescu \cite{Voi} when $\G$ has the approximation property.
\end{abstract}

\begin{spacing}{1.0}

\maketitle

\section{Introduction}

The class of inner amenable locally compact groups has played an important role in the development of abstract harmonic analysis and its connection to operator algebras \cite{LP1,LR,St2}. In particular, it provides a sufficiently general class of locally compact groups for which approximation properties of $G$ can be characterized through approximation properties of its reduced $C^*$-algebra $C^*_\lm(G)$ and von Neumann algebra $VN(G)$. Indeed, within the class of inner amenable groups, amenability of $G$ is equivalent to nuclearity of $C^*_\lm(G)$ as well as injectivity of $VN(G)$ \cite{LP1}; exactness of $G$ is equivalent to exactness of $C^*_\lm(G)$ \cite{Claire}, and the Haagerup property of $G$ is equivalent to the Haagerup property of $VN(G)$ \cite{OOT}.

In \cite{GR}, a notion of inner amenability was introduced for an arbitrary locally compact quantum group $\G$ by means of the existence of a state $m\in\LIQ^*$ satisfying
\begin{equation}\label{e:1}\la m,x\star f\ra=\la m,f\star x\ra, \ \ \ f\in\LOQ, \  x\in\LIQ.\end{equation}
Although such a condition may be interesting in its own right, if $\G$ is co-amenable, then any cluster point in $\LIQ^*$ of a bounded approximate identity in $\LOQ$ will give rise to such a state. In particular, any locally compact group is an ``inner amenable'' locally compact quantum group in the sense of \cite{GR}.

In this paper we introduce a bona fide generalization of inner amenability to locally compact quantum groups. We study its basic properties, examples, and related notions of strong and topological inner amenability. Generalizing a well-known result of Lau--Paterson \cite{LP1}, we show that $\G$ is amenable if and only if $\LIQH$ is injective and $\G$ is inner amenable. We also derive sufficient conditions under which the bicrossed product of a matched pair of locally compact groups is inner amenable. 

We pursue homological manifestations of this concept in section 4, showing that inner amenability of $\G$ entails the relative 1-flatness of $\LOQH$. As an application of our techniques, we prove that $\G$ is co-amenable if and only if $C_0(\G)$ is nuclear and $\h{\G}$ is topologically inner amenable. This resolves a recent conjecture of Ng--Viselter \cite{NV}, and generalizes a recent result of Ng \cite{Ng} from locally compact groups to arbitrary Kac algebras, wherein $\G$ is co-amenable if and only if $C_0(\G)$ is nuclear and has a tracial state. The method of proof shows that it is \textit{inner amenability}, as opposed to discreteness, that underlies the original averaging technique of Haagerup. Indeed, we show that for strongly inner amenable quantum groups, weak amenability and the approximation property follow respectively from the w*CBAP and w*OAP of $\LIQH$. In particular, for inner amenable locally compact groups $G$, the w*CBAP of $VN(G)$ implies weak amenability of $G$ and the approximation property is equivalent to the w*OAP of $VN(G)$. Similar results are proved in the setting of topological inner amenability and approximation properties of $C_0(\h{\G})$. These techniques may be viewed as new tools for the development of harmonic analysis on quantum groups beyond the unimodular discrete setting (for which the above results greatly simplify). 

In section 5 we establish the self-duality of (non-relative) 1-biflatness, that is, $\LOQ$ is 1-biflat if and only if $\LOQH$ is 1-biflat. This shows, in particular, that $\ell^1(\G)$ is not relatively 1-biflat for any non-unimodular discrete quantum group $\G$. Section 6 is devoted to the question of operator amenability of $L^1_{cb}(\G)$, the $cb$-multiplier closure of $\LOQ$. For unimodular discrete quantum groups $\G$ with Kirchberg's factorization property, we show that $\G$ is weakly amenable if and only if $L^1_{cb}(\h{\G})$ is operator amenable. This result is new even for the class of weakly amenable residually finite discrete groups such that $C^*(G)$ is not residually finite-dimensional (see \cite{Bekka}). 

We finish in section 7 with a strengthening of \cite[Proposition 5.10]{C}, by showing that $C_u(\G)^*\cong\McbQl$ completely isometrically and weak*-weak* homeomorphically when $\h{\G}$ is amenable. As a corollary, when $\h{\G}$ has the approximation property, we prove that $\G$ is co-amenable if and only if $\h{\G}$ is amenable.

\section{Preliminaries}

\subsection{Operator Modules} Let $\mc{A}$ be a completely contractive Banach algebra. We say that an operator space $X$ is a right \e{operator $\mc{A}$-module} if it is a right Banach $\mc{A}$-module such that the module map $m_X:X\hten\mc{A}\rightarrow X$ is completely contractive, where $\hten$ denotes the operator space projective tensor product. We say that $X$ is \e{faithful} if for every non-zero $x\in X$, there is $a\in\mc{A}$ such that $x\cdot a\neq 0$, and we say that $X$ is \e{essential} if $\la X\cdot\mc{A}\ra=X$, where $\la\cdot\ra$ denotes the closed linear span. Note that our definition of faithfulness, the standard notion in operator modules, is opposite in nature to the usual definition in algebra. We denote by $\modA$ the category of right operator $\mc{A}$-modules with morphisms given by completely bounded module homomorphisms. Left operator $\mc{A}$-modules and operator $\mc{A}$-bimodules are defined similarly, and we denote the respective categories by $\Amod$ and $\AmodA$. Regarding terminology, in what follows we will often omit the term ``operator'' when discussing homological properties of operator modules as we will be working exclusively in the operator space category.


Let $\mc{A}$ be a completely contractive Banach algebra, $X\in\modA$ and $Y\in\Amod$. The \e{$\mc{A}$-module tensor product} of $X$ and $Y$ is the quotient space $X\hten_{\mc{A}}Y:=X\hten Y/N$, where
$$N=\la x\cdot a\ten y-x\ten a\cdot y\mid x\in X, \ y\in Y, \ a\in\mc{A}\ra,$$
and, again, $\la\cdot\ra$ denotes the closed linear span. It follows that (see \cite[Corollary 3.5.10]{BLM})
$$\mc{CB}_{\mc{A}}(X,Y^*)\cong N^{\perp}\cong(X\hten_{\mc{A}} Y)^*,$$
where $\mc{CB}_{\mc{A}}(X,Y^*)$ is the space of completely bounded right $\mc{A}$-module maps $\Phi:X\rightarrow Y^*$.
If $Y=\mc{A}$, then clearly $N\subseteq\mathrm{Ker}(m_X)$ where $m_X:X\hten\mc{A}\rightarrow X$ is the module map. If the induced mapping $\widetilde{m}_X:X\hten_{\mc{A}}\mc{A}\rightarrow X$ is a completely isometric isomorphism we say that $X$ is an \e{induced $\mc{A}$-module}. A similar definition applies for left modules. In particular, we say that $\mc{A}$ is \e{self-induced} if $\widetilde{m}_\mc{A}:\mc{A}\hten_{\mc{A}}\mc{A}\cong\mc{A}$ completely isometrically.

Let $\mc{A}$ be a completely contractive Banach algebra and $X\in\modA$. The identification $\mc{A}^+=\mc{A}\oplus_1\C$ turns the unitization of $\mc{A}$ into a unital completely contractive Banach algebra, and it follows that $X$ becomes a right operator $\mc{A}^+$-module via the extended action
\begin{equation*}x\cdot(a+\lm e)=x\cdot a+\lm x, \ \ \ a\in\mc{A}^+, \ \lm\in\C, \ x\in X.\end{equation*}
Let $C\geq1$. Then $X$ is \e{relatively $C$-projective} if there exists a morphism $\Phi^+:X\rightarrow X\hten\mc{A}^+$ which is a right inverse to the extended module map $m_X^+:X\hten\mc{A}^+\rightarrow X$ such that $\norm{\Phi^+}_{cb}\leq C$. When $X$ is essential, then $X$ is relatively $C$-projective if and only if there exists a morphism $\Phi:X\rightarrow X\hten\mc{A}$ satisfying $\norm{\Phi}_{cb}\leq C$ and $m_X\circ\Phi=\id_{X}$ by the operator analogue of \cite[Proposition 1.2]{DP}. We say that $X$ is \e{$C$-projective} if for every $Y,Z\in\modA$, every complete quotient morphism $\Psi:Y\twoheadrightarrow Z$, every morphism $\Phi:X\rightarrow Z$, and every $\varepsilon>0$, there exists a morphism $\widetilde{\Phi}_\varepsilon:X\rightarrow Y$ such that $\norm{\widetilde{\Phi}_\varepsilon}_{cb}< C\norm{\Phi}_{cb}+\varepsilon$  and $\Psi\circ\widetilde{\Phi}_\varepsilon=\Phi$, i.e., the following diagram commutes:
\begin{equation*}
\begin{tikzcd}
                     &Y \arrow[d, two heads, "\Psi"]\\
X \arrow[ru, dotted, "\widetilde{\Phi}_\varepsilon"] \arrow[r, "\Phi"] &Z
\end{tikzcd}
\end{equation*}

Given a completely contractive Banach algebra $\mc{A}$ and $X\in\modA$, there is a canonical completely contractive morphism $\Delta^+:X\rightarrow\mc{CB}(\mc{A}^+,X)$ given by
\begin{equation*}\Delta^+(x)(a)=x\cdot a, \ \ \ x\in X, \ a\in\mc{A}^+,\end{equation*}
where the right $\mc{A}$-module structure on $\mc{CB}(\mc{A}^+,X)$ is defined by
\begin{equation*}(\Psi\cdot a)(b)=\Psi(ab), \ \ \ a\in\mc{A}, \ \Psi\in\mc{CB}(\mc{A}^+,X), \ b\in\mc{A}^+.\end{equation*}
An analogous construction exists for objects in $\Amod$. Let $C\geq 1$. Then $X$ is \e{relatively $C$-injective} if there exists a morphism $\Phi^+:\mc{CB}(\mc{A}^+,X)\rightarrow X$ such that $\Phi^+\circ\Delta^+=\id_{X}$ and $\norm{\Phi^+}_{cb}\leq C$. When $X$ is faithful, then $X$ is relatively $C$-injective if and only if there exists a morphism $\Phi:\mc{CB}(\mc{A},X)\rightarrow X$ such that $\Phi\circ\Delta=\id_{X}$ and $\norm{\Phi}_{cb}\leq C$ by the operator analogue of \cite[Proposition 1.7]{DP}, where $\Delta(x)(a):=\Delta^+(x)(a)$ for $x\in X$ and $a\in\mc{A}$. We say that $X$ is \e{$C$-injective} if for every $Y,Z\in\modA$, every completely isometric morphism $\Psi:Y\hookrightarrow Z$, and every morphism $\Phi:Y\rightarrow X$, there exists a morphism $\widetilde{\Phi}:Z\rightarrow X$ such that $\norm{\widetilde{\Phi}}_{cb}\leq C\norm{\Phi}_{cb}$ and $\widetilde{\Phi}\circ\Psi=\Phi$, that is, the following diagram commutes:

\begin{equation*}
\begin{tikzcd}
Z \arrow[rd, dotted, "\widetilde{\Phi}"]\\
Y \arrow[u, hook, "\Psi"] \arrow[r, "\Phi"] &X
\end{tikzcd}
\end{equation*}

There is a natural categorical equivalence between $\AmodA$ and $\mathbf{mod\hskip 2pt \mc{A}^{\mathrm{op}}\hten\mc{A}}$ given by
$$axb=x\cdot(a\ten b), \ \ \ a,b\in\mc{A}, \ x\in X, \ X\in\AmodA.$$
With this identification, we obtain the following bimodule analogue of \cite[Proposition 2.3]{C}.

\begin{prop}\label{p:rel+inj} Let $\mc{A}$ be a completely contractive Banach algebra and $X\in\AmodA$. If $X$ is $C_1$-injective in $\mathbf{mod}-\C$ and is relatively $C_2$-injective in $\AmodA$, then $X$ is $C_1C_2$-injective in $\AmodA$.\end{prop}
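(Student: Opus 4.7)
The plan is to adapt the averaging argument of Proposition 2.3 of \cite{C} to the bimodule category via the categorical equivalence $\AmodA \cong \mathbf{mod}-\mc{A}^{\mathrm{op}}\hten\mc{A}$ recalled just before the statement. Under this identification, the relative $C_2$-injectivity of $X$ in $\AmodA$ provides, by the operator analogue of Dales--Polyakov cited for the faithful case, a morphism $\rho : \mc{CB}((\mc{A}^{\mathrm{op}}\hten\mc{A})^+,X)\rightarrow X$ in $\AmodA$ with $\rho\circ\Delta^+=\id_X$ and $\|\rho\|_{cb}\leq C_2$.

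Given a completely isometric bimodule morphism $\Psi:Y\hookrightarrow Z$ and a bimodule morphism $\Phi:Y\rightarrow X$, the first step is to forget the bimodule structure and use the $C_1$-injectivity of $X$ in $\mathbf{mod}-\C$ (i.e.\ at the level of operator spaces) to obtain a completely bounded extension $\Phi_0:Z\rightarrow X$ of $\Phi$ with $\|\Phi_0\|_{cb}\leq C_1\|\Phi\|_{cb}$. This $\Phi_0$ need not respect the $\mc{A}$-actions.

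The second step is to lift $\Phi_0$ to a genuine bimodule morphism by setting
\[
\widehat{\Phi}_0 : Z \longrightarrow \mc{CB}((\mc{A}^{\mathrm{op}}\hten\mc{A})^+,X), \qquad \widehat{\Phi}_0(z)(a\otimes b) = \Phi_0(azb).
\]
Using the module structure $(\Psi\cdot(c\otimes d))(a\otimes b)=\Psi((c\otimes d)(a\otimes b))$ on $\mc{CB}$, a direct computation shows that $\widehat{\Phi}_0$ is a morphism in $\AmodA$ with $\|\widehat{\Phi}_0\|_{cb}\leq\|\Phi_0\|_{cb}$; moreover, since $\Phi_0|_Y=\Phi$ is already a bimodule map, one verifies $\widehat{\Phi}_0\circ\Psi=\Delta^+\circ\Phi$. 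Composing with $\rho$ yields $\widetilde{\Phi}:=\rho\circ\widehat{\Phi}_0:Z\rightarrow X$, and
\[
\widetilde{\Phi}\circ\Psi=\rho\circ\Delta^+\circ\Phi=\Phi, \qquad \|\widetilde{\Phi}\|_{cb}\leq C_2\|\Phi_0\|_{cb}\leq C_1C_2\|\Phi\|_{cb},
\]
which gives $C_1C_2$-injectivity of $X$ in $\AmodA$.

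The argument is essentially formal once the right intertwining diagrams are set up, so the only real point to check carefully is the bimodule-morphism property of $\widehat{\Phi}_0$ under the conventions for multiplication in $\mc{A}^{\mathrm{op}}\hten\mc{A}$ and the corresponding action on $\mc{CB}((\mc{A}^{\mathrm{op}}\hten\mc{A})^+,X)$; this bookkeeping, together with the passage from the faithful splitting $\Delta$ to its unitized version $\Delta^+$, is where one must be most careful but involves no substantive new idea beyond the right-module case.
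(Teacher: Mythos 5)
Your argument is correct and is essentially the route the paper intends: it states the proposition as the bimodule analogue of \cite[Proposition 2.3]{C}, obtained by transporting the one-sided extend-then-average argument through the categorical equivalence $\AmodA\cong\mathbf{mod}\hskip2pt\mc{A}^{\mathrm{op}}\hten\mc{A}$, which is exactly what you carry out. The only cosmetic remark is that the splitting $\rho$ on $\mc{CB}((\mc{A}^{\mathrm{op}}\hten\mc{A})^+,X)$ is already what relative $C_2$-injectivity gives by definition via the unitization, so no appeal to the Dales--Polyakov reduction for faithful modules is needed there.
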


\subsection{Locally Compact Quantum Groups} A \e{locally compact quantum group} is a quadruple $\G=(\LIQ,\Gam,\vphi,\psi)$, where $\LIQ$ is a Hopf-von Neumann algebra with co-multiplication $\Gam:\LIQ\rightarrow\LIQ\oten\LIQ$, and $\vphi$ and $\psi$ are fixed left and right Haar weights on $\LIQ$, respectively \cite{KV1,KV2}. For every locally compact quantum group $\G$, there exists a \e{left fundamental unitary operator} $W$ on $L^2(\G,\vphi)\ten L^2(\G,\vphi)$ and a \e{right fundamental unitary operator} $V$ on $L^2(\G,\psi)\ten L^2(\G,\psi)$ implementing the co-multiplication $\Gam$ via
\begin{equation*}\Gam(x)=W^*(1\ten x)W=V(x\ten 1)V^*, \ \ \ x\in\LIQ.\end{equation*}
Both unitaries satisfy the \e{pentagonal relation}; that is,
\begin{equation*}\label{penta}W_{12}W_{13}W_{23}=W_{23}W_{12}\hs\hs\mathrm{and}\hs\hs V_{12}V_{13}V_{23}=V_{23}V_{12}.\end{equation*}
By \cite[Proposition 2.11]{KV2}, we may identify $L^2(\G,\vphi)$ and $L^2(\G,\psi)$, so we will simply use $\LTQ$ for this Hilbert space throughout the paper. We denote by $R$ the unitary antipode of $\G$.

Let $\LOQ$ denote the predual of $\LIQ$. Then the pre-adjoint of $\Gam$ induces an associative completely contractive multiplication on $\LOQ$, defined by
\begin{equation*}\star:\LOQ\hten\LOQ\ni f\ten g\mapsto f\star g=\Gam_*(f\ten g)\in\LOQ.\end{equation*}
The multiplication $\star$ is a complete quotient map from $\LOQ\hten\LOQ$ onto $\LOQ$, implying
\begin{equation*}\la\LOQ\star\LOQ\ra=\LOQ.\end{equation*}
Moreover, $\LOQ$ is always self-induced. The proof follows from \cite[Theorem 2.7]{Vaes} (see \cite[Proposition 3.1]{C} for details). The canonical $\LOQ$-bimodule structure on $\LIQ$ is given by
\begin{equation*}f\star x=(\id\ten f)\Gam(x) \ \ \ \ x\star f=(f\ten\id)\Gam(x), \ \ \ x\in\LIQ, \ f\in\LOQ.\end{equation*}
A \e{left invariant mean on $\LIQ$} is a state $m\in \LIQ^*$ satisfying
\begin{equation*}\label{leftinv}\la m,x\star f \ra=\la f,1\ra\la m,x\ra, \ \ \ x\in\LIQ, \ f\in\LOQ.\end{equation*}
Right and two-sided invariant means are defined similarly. A locally compact quantum group $\G$ is said to be \e{amenable} if there exists a left invariant mean on $\LIQ$. It is known that $\G$ is amenable if and only if there exists a right (equivalently, two-sided) invariant mean (cf. \cite[Proposition 3]{DQV}). We say that $\G$ is \e{co-amenable} if $\LOQ$ has a bounded left (equivalently, right or two-sided) approximate identity (cf. \cite[Theorem 3.1]{BT}).

The \e{left regular representation} $\lm:\LOQ\rightarrow\BLTQ$ of $\G$ is defined by
\begin{equation*}\lm(f)=(f\ten\id)(W), \ \ \ f\in\LOQ,\end{equation*}
and is an injective, completely contractive homomorphism from $\LOQ$ into $\BLTQ$. Then $\LIQH:=\{\lm(f) : f\in\LOQ\}''$ is the von Neumann algebra associated with the dual quantum group $\h{\G}$. Analogously, we have the \e{right regular representation} $\rho:\LOQ\rightarrow\BLTQ$ defined by
\begin{equation*}\rho(f)=(\id\ten f)(V), \ \ \ f\in\LOQ,\end{equation*}
which is also an injective, completely contractive homomorphism from $\LOQ$ into $\BLTQ$. Then $\LIQHP:=\{\rho(f) : f\in\LOQ\}''$ is the von Neumann algebra associated to the quantum group $\h{\G}'$. It follows that $\LIQHP=\LIQH'$, and the left and right fundamental unitaries satisfy $W\in\LIQ\oten\LIQH$ and $V\in\LIQHP\oten\LIQ$ \cite[Proposition 2.15]{KV2}. Moreover, dual quantum groups always satisfy $\LIQ\cap\LIQH=\LIQ\cap\LIQHP=\C1$ \cite[Proposition 3.4]{VD}. The modular conjugations of the dual Haar weights give rise to conjugate linear isometries $J,\widehat{J}:\LTQ\rightarrow\LTQ$ satisfying
$$J\LIQ J=\LIQ'\hs\hs\mathrm{and}\hs\hs\widehat{J}\LIQH\widehat{J}=\LIQHP.$$
Moreover, the unitary $U:=\widehat{J}J$ intertwines the left and right regular representations via $\rho(f)=U\lm(f)U^*$, $f\in\LOQ$. At the level of the fundamental unitaries, this relation becomes
\begin{equation}\label{e:WV}V=\sigma(1\ten U)W(1\ten U^*)\sigma.\end{equation}
We also record the adjoint formulas $(\widehat{J}\ten J)W(\widehat{J}\ten J)=W^*$ and $(J\ten\widehat{J})V(J\ten\widehat{J})=V^*$.


If $G$ is a locally compact group, we let $\G_a=(\LI,\Gam_a,\vphi_a,\psi_a)$ denote the \e{commutative} quantum group associated with the commutative von Neumann algebra $\LI$, where the co-multiplication is given by $\Gam_a(f)(s,t)=f(st)$, and $\vphi_a$ and $\psi_a$ are integration with respect to a left and right Haar measure, respectively. The dual $\h{\G}_a$ of $\G_a$ is the \e{co-commutative} quantum group $\G_s=(\LG,\Gam_s,\vphi_s,\psi_s)$, where $\LG$ is the left group von Neumann algebra with co-multiplication $\Gam_s(\lm(t))=\lm(t)\ten\lm(t)$, and $\vphi_s=\psi_s$ is Haagerup's Plancherel weight (cf. \cite[\S VII.3]{T3}). Then $L^1(\G_a)$ is the usual group convolution algebra $\LO$, and $L^1(\G_s)$ is the Fourier algebra $A(G)$. It is known that every commutative locally compact quantum group is of the form $\G_a$ \cite[Theorem 2]{T}. Therefore, every commutative locally compact quantum group is co-amenable, and is amenable if and only if the underlying locally compact group is amenable. By duality, every co-commutative locally compact quantum group is of the form $\G_s$, which is always amenable, and is co-amenable if and only if the underlying locally compact group is amenable, by Leptin's theorem \cite{Lep}.

For a locally compact quantum group $\G$, we let $C_0(\G):=\overline{\hat{\lm}(\LOQH)}^{\norm{\cdot}}$ denote the \e{reduced quantum group $C^*$-algebra} of $\G$. We say that $\G$ is \e{compact} if $C_0(\G)$ is a unital $C^*$-algebra, in which case we denote $C_0(\G)$ by $C(\G)$. We say that $\G$ is \e{discrete} if $\LOQ$ is unital, in which case we denote $\LOQ$ by $\ell^1(\G)$. It is well-known that $\G$ is compact if and only if $\h{\G}$ is discrete, and in that case, $\ell^1(\h{\G})\cong\bigoplus_{1} \{T_{n_\alpha}(\C)\mid\alpha\in\Irr\}$,
where $T_{n_\alpha}(\C)$ is the space of $n_\alpha\times n_\alpha$ trace class operators, and $\Irr$ denotes the set of (equivalence classes of) irreducible co-representations of the compact quantum group $\G$ \cite{Wo}. For general $\G$, the operator dual $M(\G):=C_0(\G)^*$ is a completely contractive Banach algebra containing $\LOQ$ as a norm closed two-sided ideal via the map $\LOQ\ni f\mapsto f|_{C_0(\G)}\in M(\G)$. The co-multiplication satisfies $\Gam(C_0(\G))\subseteq M(C_0(\G)\ten_{\min} C_0(\G))$, where $M(C_0(\G)\ten_{\min} C_0(\G))$ is the multiplier algebra of the $C^*$-algebra $C_0(\G)\ten_{\min} C_0(\G)$.

We let $C_u(\G)$ be the \e{universal quantum group $C^*$-algebra} of $\G$, and denote the canonical surjective *-homomorphism onto $C_0(\G)$ by $\pi_{\G}:C_u(\G)\rightarrow C_0(\G)$ \cite{K}. The space $C_u(\G)^*$ then has the structure of a unital completely contractive Banach algebra
such that the map $\LOQ\rightarrow C_u(\G)^*$ given by the composition of the inclusion $\LOQ\subseteq M(\G)$ and $\pi_{\G}^*:M(\G)\rightarrow C_u(\G)^*$ is a completely isometric homomorphism, and it follows that $\LOQ$ is a norm closed two-sided ideal in $C_u(\G)^*$ \cite[Proposition 8.3]{K}.

An element $\hat{b}\in\LIQH$ is said to be a \e{completely bounded left multiplier} of $\LOQ$ if $\hat{b}\lm(f)\in\lm(\LOQ)$ for all $f\in\LOQ$ and the induced map
\begin{equation*}m_{\hat{b}}^l:\LOQ\ni f\mapsto\lm^{-1}(\hat{b}\lm(f))\in\LOQ\end{equation*}
is completely bounded on $\LOQ$. We let $\McbQl$ denote the space of all completely bounded left multipliers of $\LOQ$, which is a completely contractive Banach algebra with respect to the norm
\begin{equation*}\norm{[\hat{b}_{ij}]}_{M_n(\McbQl)}=\norm{[m^l_{\hat{b}_{ij}}]}_{cb}.\end{equation*}
Completely bounded right multipliers are defined analogously and we denote by $\McbQr$ the corresponding completely contractive Banach algebra. There is a canonical, injective completely contractive homomorphism $\tilde{\lm}:C_u(\G)^*\rightarrow\McbQl$, extending $\lm$, which maps $\mu\in C_u(\G)^*$ to the operator of left multiplication by $\mu$ on $\LOQ$. In general, given $\hat{b}\in\McbQl$, the adjoint $\Theta^l(\hat{b}):=(m_{\hat{b}}^l)^*$ defines a normal completely bounded left $\LOQ$-module map on $\LIQ$, and by  \cite[Proposition 4.2]{JNR} have the completely isometric identification
\begin{equation*}\label{e:Mcbidentifications}\Theta^l:\McbQl\cong \ _{\LOQ}\mc{CB}^{\sigma}(\LIQ).\end{equation*}
Moreover, it follows from \cite[Proposition 4.1]{JNR} that $_{\LOQ}\mc{CB}^{\sigma}(\LIQ)=_{\LOQ}\mc{CB}(C_0(\G),\LIQ)$.

It is known that $\McbQl$ is a dual operator space \cite[Theorem 3.5]{HNR2}, with predual $\QcbQl$. By the general result \cite[Lemma 1.6]{HK}, it follows from \cite[Theorem 3.3]{KR} that for arbitrary locally compact quantum groups
$$\QcbQl=\{\Om_{A,\rho}\mid A\in C_0(\G)\ten_{\min}\mc{K}(\ell^2), \ \rho\in \LOQ\hten \mc{T}(\ell^2)\},$$
where $\la\hat{b},\Om_{A,\rho}\ra=\la(\Theta^l(\hat{b})\ten\id_{K_{\infty}})(A),\rho\ra$, $\hat{b}\in\McbQl$.
We say that $\G$ is \textit{weakly amenable} if there exists an approximate identity $(\hat{f}_i)$ in $\LOQH$ which is bounded in $\McbQHl$. The infimum of bounds for such approximate identities is the Cowling--Haagerup constant of $\G$, and is denoted $\Lambda_{cb}(\G)$. We say that $\G$ has the \textit{approximation property} if there exists a net $(\hat{f}_i)$ in $\LOQH$ such that $\h{\Theta}^l(\hat{\lm}(\hat{f}_i))$ converges to $\id_{\LIQH}$ in the stable point-weak* topology. 

Let $\G$ be a locally compact quantum group. The right fundamental unitary $V$ of $\G$ induces a co-associative co-multiplication
\begin{equation*}\Gam^r:\BLTQ\ni T\mapsto V(T\ten 1)V^*\in\BLTQ\oten\BLTQ,\end{equation*}
and the restriction of $\Gam^r$ to $\LIQ$ yields the original co-multiplication $\Gam$ on $\LIQ$. The pre-adjoint of $\Gam^r$ induces an associative completely contractive multiplication on the space of trace class operators $\TCQ$, defined by
\begin{equation*}\rhd:\TCQ\hten\TCQ\ni\om\ten\tau\mapsto\om\rhd\tau=\Gam^r_*(\om\ten\tau)\in\TCQ.\end{equation*}
Analogously, the left fundamental unitary $W$ of $\G$ induces a co-associative co-multiplication
\begin{equation*}\Gam^l:\BLTQ\ni T\mapsto W^*(1\ten T)W\in\BLTQ\oten\BLTQ,\end{equation*}
and the restriction of $\Gam^l$ to $\LIQ$ is also equal to $\Gam$. The pre-adjoint of $\Gam^l$ induces another associative completely contractive multiplication 
\begin{equation*}\lhd:\TCQ\hten\TCQ\ni\om\ten\tau\mapsto\om\lhd\tau=\Gam^l_*(\om\ten\tau)\in\TCQ.\end{equation*}
The algebra $\BLTQ$ inherits a canonical $\TCQ$-bimodule structure with respect to both the left $\lhd$ and right $\rhd$ products. 

It was shown in \cite[Lemma 5.2]{HNR2} that the pre-annihilator $\LIQ_{\perp}$ of $\LIQ$ in $\TCQ$ is a norm closed two sided ideal in $(\TCQ,\rhd)$ and $(\TCQ,\lhd)$, respectively, and the complete quotient map
\begin{equation*}\label{pi}\pi:\TCQ\ni\om\mapsto f=\om|_{\LIQ}\in\LOQ\end{equation*}
is an algebra homomorphism from $\mc{T}_{\rhd}:=(\TCQ,\rhd)$, respectively, $\mc{T}_{\lhd}:=(\TCQ,\lhd)$, onto $\LOQ$. It follows that the right $\lhd$-module and left $\rhd$-module structures degenerate to an $\LOQ$-bimodule structure on $\BLTQ$:
$$ f\rhd T = (\id\ten f)\Gam^r(T), \ \ \ T\lhd f = (f\ten\id)\Gam^l(T), \ \ \ f\in\LOQ, \ T\in\BLTQ.$$

By \cite[Proposition 2.1]{KV2} the unitary antipode $R$ satisfies $R(x)=\widehat{J}x^*\widehat{J}$, for $x\in\LIQ$. It therefore extends to a *-anti-automorphism (still denoted) $R:\BLTQ\rightarrow\BLTQ$, via $R(T)=\h{J}T^*\h{J}$, $T\in\BLTQ$. The extended antipode maps $\LIQ$ and $\LIQH$ onto $\LIQ$ and $\LIQHP$, respectively, and satisfies the generalized antipode relations; that is,
\begin{equation}\label{e:R}( R\ten R)\circ\Gam^r=\Sigma\circ\Gam^l\circ R\hs\mathrm{and}
\hs( R\ten R)\circ\Gam^l=\Sigma\circ\Gam^r\circ R,\end{equation}
where $\Sigma$ is the flip map on $\BLTQ\oten\BLTQ$.

Let $\G$ and $\Hb$ be two locally compact quantum groups. Then $\Hb$ is said to be a \e{closed quantum subgroup of $\G$ in the sense of Vaes} if there exists a normal, unital, injective *-homomorphism $\gamma:\LIHH\rightarrow\LIQH$ satisfying $(\gamma\ten\gamma)\circ\Gam_{\h{\Hb}}=\Gam_{\h{\G}}\circ\gamma$. This is not the original definition of Vaes \cite[Definition 2.5]{V2}, but was shown to be equivalent in \cite[Theorem 3.3]{DKSS}.

\section{Inner Amenability}

Given a locally compact quantum group $\G$ the composition $W\sigma V\sigma\in\LIQ\oten\BLTQ$ defines a unitary co-representation of $\G$ called the \textit{conjugation co-representation}, where $\sigma$ is the flip map on $\LTQ\ten\LTQ$. When $\G=\G_a$ for some locally compact group $G$, it follows that
$$W\sigma V\sigma\xi(s,t)=\xi(s,s^{-1}ts)\Delta(s)^{1/2}, \ \ \ \xi\in L^2(G\times G), \ s,t\in G.$$ 
Thus, $W\sigma V\sigma$ is the unitary generator of the conjugation representation $\beta_2:G\rightarrow\BLT$, where
$$\beta_2(s)\xi(t)=\lm(s)\rho(s)\xi(t)=\xi(s^{-1}ts)\Delta(s)^{1/2}.$$
Following Paterson \cite[2.35.H]{Pat2}, we say that a locally compact group $G$ is \textit{inner amenable} if there exists a state $m\in\LI^*$ satisfying
\begin{equation}\label{e:PatIA}\la m,\beta_{\infty}(s)f\ra=\la m,f\ra \ \ \ s\in G, \ f\in\LI,\end{equation}
where $\beta_\infty(s)f(t)=f(s^{-1}ts)$, $s,t\in G$, $f\in\LI$, is the conjugation action on $\LI$. 

\begin{remark} In \cite{Effros}, Effros defined a discrete group $G$ to be ``inner amenable'' if there exists a conjugation invariant mean $m\in\ell^{\infty}(G)^*$ such that $m\neq\delta_e$. In what follows, inner amenability will always refer to the definition (\ref{e:PatIA}) given above. In particular, every discrete group is inner amenable.
\end{remark}

\begin{defn}\label{d:WIA} Let $\G$ be a locally compact quantum group. We say that
\begin{enumerate}[label=(\roman*)]
\item $\G$ is \e{strongly inner amenable} (see \cite{OOT}) if there exists a net $(\xi_i)$ of unit vectors such that
$$\norm{W\sigma V\sigma(\eta\ten\xi_i)-\eta\ten\xi_i}\rightarrow0, \ \ \ \eta\in\LTQ.$$
\item $\G$ is \e{inner amenable} if there exists a state $m\in\LIQH^*$ satisfying
$$\la m,\hat{x}\lhd f\ra=\la f,1\ra\la m,\hat{x}\ra, \ \ \ f\in\LOQ, \ \hat{x}\in\LIQH.$$
Such a state is said to be \e{inner invariant}.
\item $\G$ is \e{topologically inner amenable} if there exists a state $m\in C_0(\h{\G})^*$ such that
$$\la m,\hat{x}\lhd f\ra=\la f,1\ra\la m,\hat{x}\ra, \ \ \ f\in\LOQ, \ \hat{x}\in C_0(\h{\G}).$$
Such a state is said to be \e{topologically inner invariant}.
\end{enumerate}
\end{defn}

\begin{examples} The following known examples are worth mentioning. 
\begin{enumerate}[label=(\roman*)]
\item Any co-commutative quantum group $\G_s$ is trivially strongly inner amenable, as $V_s=W_a$ so that $W_s\sigma V_s\sigma=W_sW_s^*=1$. 
\item Any unimodular discrete quantum group is strongly inner amenable; the unit vector $\xi:=\Lambda_\vphi(1)$ being conjugation invariant, where $\vphi$ is the Haar trace on the compact dual.
\item It was shown in \cite{NV} that if $\G$ has trivial scaling group, and $C_0(\h{\G})$ possesses a tracial state, then $\G$ is topologically inner amenable.
\end{enumerate}
Further examples, including the bicrossed product construction will be studied below.
\end{examples}

The following proposition is standard. We include the proof for completeness.

\begin{prop}\label{p:implication} Let $\G$ be a locally compact quantum group. If $\G$ is strongly inner amenability  then it is inner amenable.
\end{prop}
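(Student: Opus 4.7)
The strategy is to construct an inner invariant state as a weak$^*$-cluster point of the vector functionals $m_i := \omega_{\xi_i}|_{\LIQH}$, each of which is a state on $\LIQH$ since $\|\xi_i\|=1$. Weak$^*$-compactness of the state space of $\LIQH$ (Banach--Alaoglu together with weak$^*$-closedness of states) yields such a cluster point $m$, again a state. To verify that $m$ is inner invariant, it suffices to show that for every $\hat{x}\in\LIQH$ and $f\in\LOQ$,
\[
m_i(\hat{x}\lhd f) - \langle f,1\rangle m_i(\hat{x}) \longrightarrow 0,
\]
since passing to the cluster point then delivers $m(\hat{x}\lhd f)=\langle f,1\rangle m(\hat{x})$.

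Since $\LIQ$ acts on $\LTQ$ in standard form, each $f\in\LOQ$ can be realized as a vector functional $\omega_{\eta,\mu}$ for some $\eta,\mu\in\LTQ$. Unpacking $\hat{x}\lhd f = (f\otimes\id)(W^*(1\otimes\hat{x})W)$, the difference to be estimated becomes
\[
\langle(W^*(1\otimes\hat{x})W - 1\otimes\hat{x})(\eta\otimes\xi_i),\, \mu\otimes\xi_i\rangle.
\]
Two inputs drive the estimate: (i) since $\sigma V\sigma\in\LIQ\oten\LIQHP$ and $\LIQHP=\LIQH'$, the element $\sigma V\sigma$ commutes with $1\otimes\hat{x}$; and (ii) applying $W^*$ to the strong inner amenability hypothesis yields the norm approximation $\|\sigma V\sigma(\zeta\otimes\xi_i) - W^*(\zeta\otimes\xi_i)\|\to 0$ for each $\zeta\in\LTQ$.

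Inserting $(\sigma V\sigma)^*(\sigma V\sigma)=1$ and using (i) to commute $1\otimes\hat{x}$ past the $\sigma V\sigma$ factors rewrites $W^*(1\otimes\hat{x})W$ in a form in which the $W^*$--factors appear only adjacent to vectors of the form $\zeta\otimes\xi_i$. Applying (ii) to both $\eta\otimes\xi_i$ and $\mu\otimes\xi_i$, together with the unitarity of $W$ and $\sigma V\sigma$, then absorbs the $W$--factors and shows that the difference tends to zero in the limit. The desired inner invariance of $m$ follows by passing to the cluster point.

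The principal technical subtlety, invisible in the classical (abelian) setting, is that the first tensor legs of $W^*(1\otimes\hat{x})W$ and $\sigma V\sigma$ both lie in the noncommutative algebra $\LIQ$, so they cannot be commuted outright; the role of (ii) is precisely to supply the approximate intertwining needed to bypass this noncommutativity, mirroring the classical use of the identity $\lambda(s)\xi_i\approx\rho(s^{-1})\xi_i$ together with the commutation of $\lambda$ and $\rho$.
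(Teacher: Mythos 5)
Your overall strategy---cluster the vector states, exploit that $\sigma V\sigma\in\LIQ\oten\LIQHP$ commutes with $1\ten\hat{x}$, and feed in the approximate intertwining coming from strong inner amenability---is the right circle of ideas, but there is a genuine gap at the crucial estimate, and it is precisely the noncommutativity you flag in your last paragraph that breaks it. Unwinding $\la\om_{\xi_i},\hat{x}\lhd f\ra$ with $f=\om_{\eta,\mu}$ and moving one $W^*$ across the inner product gives
\[
\la(1\ten\hat{x})\,W(\eta\ten\xi_i),\,W(\mu\ten\xi_i)\ra,
\]
so the operator you must control on vectors of the form $\zeta\ten\xi_i$ is $W$, \emph{not} $W^*$. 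Strong inner amenability only yields $\norm{W^*(\zeta\ten\xi_i)-\sigma V\sigma(\zeta\ten\xi_i)}\rightarrow0$ (equivalently, approximate invariance under $(W\sigma V\sigma)^*=\sigma V^*\sigma W^*$); it says nothing about $W(\zeta\ten\xi_i)$. Your insertion of $(\sigma V\sigma)^*(\sigma V\sigma)=1$ followed by commuting $1\ten\hat{x}$ past the $\sigma V\sigma$ factors only converts the expression into $\la(1\ten\hat{x})\sigma V\sigma W(\eta\ten\xi_i),\sigma V\sigma W(\mu\ten\xi_i)\ra$, which would require approximate invariance of $\eta\ten\xi_i$ under $\sigma V\sigma W$ rather than under $W\sigma V\sigma$. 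Since the first legs of $W$ and $\sigma V\sigma$ both lie in the noncommutative algebra $\LIQ$, these are genuinely different unitary co-representations, and the hypothesis gives no information about the second one. (Equivalently: knowing $\norm{T(\eta\ten\xi_i)}\rightarrow0$ for the difference of unitaries $T=W^*-\sigma V\sigma$ does not give $\norm{T^*(\eta\ten\xi_i)}\rightarrow0$.) So your step ``absorbs the $W$-factors'' does not close.

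The paper's proof resolves exactly this handedness mismatch by building the state from the vectors $J\xi_i$ rather than $\xi_i$, where $J$ is the modular conjugation: one sets $m=\mathrm{w}^*\text{-}\lim_i\,\om_{J\xi_i}|_{\LIQH}$ and uses the adjoint formulas $(\widehat{J}\ten J)W(\widehat{J}\ten J)=W^*$ and $(\widehat{J}\ten J)\sigma V\sigma(\widehat{J}\ten J)=\sigma V^*\sigma$. These convert $W(\zeta\ten J\xi_i)$ into $(\widehat{J}\ten J)W^*(\widehat{J}\zeta\ten\xi_i)$---a $W^*$ applied to a vector whose second leg is $\xi_i$, which is exactly where the hypothesis applies---and the resulting $\sigma V\sigma(\widehat{J}\zeta\ten\xi_i)$ back into $\sigma V^*\sigma(\zeta\ten J\xi_i)$, at which point the commutation of $\sigma V^*\sigma$ with $1\ten\hat{x}$ finishes the computation. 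Your argument needs this conjugation step (or some substitute for it); as written, the estimate you rely on is not a consequence of strong inner amenability.
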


\begin{proof} Let $(\xi_i)$ be a net of unit vectors asymptotically invariant under the conjugation co-representation $W\sigma V\sigma$. Passing to a subnet, we may assume that $(\om_{J\xi_i}|_{\LIQH})$ converges weak* to a state $m\in\LIQH^*$. For any $f=\om_{\xi,\eta}\in\LOQ$ and $\hat{x}\in\LIQH$, using strong inner amenability together with the adjoint relations 
$$(\widehat{J}\ten J)W(\widehat{J}\ten J)=W^*, \ \ \ (\widehat{J}\ten J)\sigma V\sigma (\widehat{J}\ten J)=\sigma V^*\sigma,$$
we have
\begin{align*}\la m,\hat{x}\lhd f\ra&=\lim_i\la \om_{J\xi_i},\hat{x}\lhd f\ra\\
&=\lim_i\la W^*(1\ten \hat{x})W(\xi\ten J\xi_i),\eta\ten J\xi_i\ra\\
&=\lim_i\la(1\ten \hat{x})W(\xi\ten J\xi_i),W(\eta\ten J\xi_i)\ra\\
&=\lim_i\la(1\ten \hat{x})(\widehat{J}\ten J)W^*(\widehat{J}\xi\ten\xi_i),(\widehat{J}\ten J)W^*(\widehat{J}\eta\ten\xi_i)\ra\\
&=\lim_i\la(1\ten \hat{x})(\widehat{J}\ten J)\sigma V\sigma(\widehat{J}\xi\ten\xi_i),(\widehat{J}\ten J)\sigma V\sigma(\widehat{J}\eta\ten\xi_i)\ra\\
&=\lim_i\la(1\ten \hat{x})\sigma V^*\sigma(\xi\ten J\xi_i),\sigma V^*\sigma(\eta\ten J\xi_i)\ra\\
&=\la m,\hat{x}\ra\la f,1\ra.\end{align*}
\end{proof}

In the commutative case, the converse of Proposition \ref{p:implication} holds.

\begin{prop}\label{p:equal} A commutative quantum group $\G_a$ is strongly inner amenable if and only if it is inner amenable if and only if its underlying group $G$ is inner amenable.\end{prop}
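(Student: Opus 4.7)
The proposition asserts the equivalence of three conditions for $\G_a$: strong inner amenability, inner amenability, and classical inner amenability of the underlying locally compact group $G$. Proposition 3.4 already supplies the implication strong IA $\Rightarrow$ IA. The plan is to prove (a) strong IA of $\G_a$ $\Leftrightarrow$ classical IA of $G$ by a direct computation with the conjugation corepresentation, and (b) IA of $\G_a$ $\Rightarrow$ strong IA of $\G_a$ by an approximation argument that reverses the chain of identities in the proof of Proposition 3.4.

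For (a), the explicit formula
$$W_a\sigma V_a\sigma(\eta\ten\xi)(s,t)=\eta(s)\xi(s^{-1}ts)\Delta(s)^{1/2}=\eta(s)(\beta_2(s)\xi)(t)$$
immediately yields
$$\|W_a\sigma V_a\sigma(\eta\ten\xi_i)-\eta\ten\xi_i\|^2=\int_G|\eta(s)|^2\,\|\beta_2(s)\xi_i-\xi_i\|^2\,ds.$$
Varying $\eta$ over normalized indicators of compact sets of positive measure, and using density of compactly supported functions in $L^2(G)$ together with uniform boundedness, this identifies strong IA of $\G_a$ with the $L^2$-Reiter-type characterization of classical inner amenability of $G$ in the sense of Paterson.

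For (b), given a quantum inner-invariant state $m\in\LIQH^*$, I would weak*-approximate $m$ by vector states $\om_{J\xi_i}|_{\LIQH}$ arising from unit vectors $\xi_i\in\LTQ$, using that $\LTQ$ is the standard form of $\LIQH=VN(G)$. Running the chain of identities in the proof of Proposition 3.4 in reverse (for $f=\om_{\xi,\eta}$ and setting $\hat{y}:=J\hat{x}J\in\LIQHP$, $\xi':=\widehat{J}\xi$, $\eta':=\widehat{J}\eta$), the inner invariance of $m$ translates into the asymptotic identity
$$\lim_i\!\left[\la(1\ten\hat{y})W^*(\xi'\ten\xi_i),W^*(\eta'\ten\xi_i)\ra-\la(1\ten\hat{y})\sigma V\sigma(\xi'\ten\xi_i),\sigma V\sigma(\eta'\ten\xi_i)\ra\right]=0$$
for all $\hat{y}\in\LIQHP$ and $\xi',\eta'\in\LTQ$. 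A Day-type convexity argument—passing to convex combinations of the $\xi_i$'s realized again as vector states via the standard form—then yields a new net of unit vectors along which $W^*(\xi'\ten\xi_i)-\sigma V\sigma(\xi'\ten\xi_i)\to 0$ in norm for every $\xi'\in\LTQ$, i.e.\ $W\sigma V\sigma(\eta\ten\xi_i)-\eta\ten\xi_i\to 0$ for every $\eta\in\LTQ$, witnessing strong IA of $\G_a$.

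The main obstacle is this Day-convexity step: weak* convergence of the vector states only delivers the averaged asymptotic identity displayed above, and upgrading to norm-level asymptotic invariance of a vector net under $W\sigma V\sigma$ requires the standard amenability-theoretic machinery (Mazur / Hahn--Banach), in parallel with Day's classical derivation of Reiter's property from the existence of an invariant mean. Equivalently, one can bypass strong IA and extract Paterson's $L^2$-Reiter net for classical inner amenability of $G$ directly from the asymptotic inner invariance of the approximating vector states.
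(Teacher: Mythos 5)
Your part (a) is sound: the identity
$$\norm{W\sigma V\sigma(\eta\ten\xi_i)-\eta\ten\xi_i}^2=\int_G|\eta(s)|^2\,\norm{\beta_2(s)\xi_i-\xi_i}_2^2\,ds$$
correctly reduces strong inner amenability of $\G_a$ to the $L^2$-Reiter form of Paterson's inner amenability of $G$ (modulo the standard equivalence of the integrated Reiter condition, the uniform-on-compacta Reiter condition, and the existence of an inner invariant mean on $L^\infty(G)$); the direction ``classical IA $\Rightarrow$ strong IA'' is essentially the paper's argument. The problem is part (b), and it is essential: without it your implications give only $S\Leftrightarrow C$ and $S\Rightarrow I$, so nothing ever takes you \emph{out of} the hypothesis that $\G_a$ is inner amenable, and the three-way equivalence does not close.

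The Day/Mazur step you propose does not work at the Hilbert-space level. What reversing the chain of identities actually yields is asymptotic agreement of the two vector states $T\mapsto\la(1\ten T)W^*(\xi'\ten\xi_i),W^*(\xi'\ten\xi_i)\ra$ and $T\mapsto\la(1\ten T)\sigma V\sigma(\xi'\ten\xi_i),\sigma V\sigma(\xi'\ten\xi_i)\ra$ on the relevant von Neumann algebra. Two unit vectors inducing the same state on a von Neumann algebra can be orthogonal (they may differ by a unitary in the commutant), so this gives no weak convergence of $W^*(\xi'\ten\xi_i)-\sigma V\sigma(\xi'\ten\xi_i)$ to zero, and Mazur's theorem has nothing to upgrade. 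Moreover $\xi\mapsto\om_\xi$ is not affine, so convex combinations of the $\xi_i$ do not implement convex combinations of the approximating states; the classical Day argument runs with positive densities in the predual $L^1(G)$ and is converted to $L^2$ via the Powers--St\o rmer inequality, which requires the invariant state to live on $L^\infty(G)$, not on $VN(G)$. What is actually needed is the transfer $I\Rightarrow C$, and this is what the paper supplies: from a quantum inner-invariant state $m\in VN(G)^*$ one computes $\lm(t)x\lm(t)^*\lhd f=x\lhd f_t$, deduces that $m$ is invariant under $x\mapsto\lm(t)x\lm(t)^*$ for every $t\in G$, and then invokes \cite[Proposition 3.2]{CT} (a Lau--Paterson-type theorem) to pass from a conjugation-invariant state on $VN(G)$ to an inner invariant mean on $L^\infty(G)$. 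That last passage is a theorem, not a formality, and it is exactly what your closing sentence (``extract Paterson's $L^2$-Reiter net directly from the approximating vector states'') elides. With $I\Rightarrow C$ in hand, your part (a) closes the cycle and the convexity argument in (b) becomes unnecessary.
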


\begin{proof} If $\G_a$ is strongly inner amenable then it is inner amenable.  Let $m\in VN(G)^*$ satisfy $\la m,x\ra=\la m,x \lhd f\ra$ for all $x\in VN(G)$ and all states $f\in\LO$. Then for $t\in G$
$$\lm(t)x\lm(t)^* \lhd f=\int_G \lm(s^{-1}t)x\lm(t^{-1}s) f(s)ds=\int_G\lm(r)^*x\lm(r)f(tr)dr=x \lhd f_t,$$
so that
$$\la m,\lm(t)x\lm(t)^*\ra=\la m,\lm(t)x\lm(t)^* \lhd f\ra=\la m,x \lhd f_t\ra=\la m,x\ra$$
for all $x\in\LG$, $t\in G$ and states $f\in\LO$. Thus, $G$ is inner amenable by \cite[Proposition 3.2]{CT}.

Conversely, if $G$ is inner amenable then there exists a net of unit vectors $(\xi_i)$ in $\LT$ satisfying
\begin{equation*}\label{3}\norm{\lm(s)\xi_i-\rho(s)^*\xi_i}_{\LT}\rightarrow0\end{equation*}
uniformly on compact subsets of $G$. For any $\eta\in C_c(G)$ we then have
$$\la W\sigma V\sigma (\eta\ten \xi_i),\eta\ten\xi_i\ra=\iint\xi_i(ts)\overline{\xi_i}(st)\Delta(s)^{1/2}|\eta(s)|^2 \ ds \ dt\rightarrow 1.$$
It follows that $(\xi_i)$ is strongly inner invariant. 
\end{proof}

Definition \ref{d:WIA} (ii) is therefore a bona fide generalization of inner amenability to quantum groups, contrary to the definition proposed in \cite{GR}. Curiously, if one ``lifts'' the relation (\ref{e:1}) proposed in \cite{GR} to the level of $\BLT$, via
$$\la m,f \rhd T\ra=\la m, T \lhd f\ra, \ \ \ f\in\LO, \ T\in\BLT,$$
then a similar argument as in Proposition \ref{p:equal} shows that one obtains a proper generalization of inner amenability. For details, see \cite[\S5]{Cthesis}.

\begin{prop}\label{p:TIA} A commutative quantum group $\G_a$ is topologically inner amenable if and only if $C^*_\lm(G)$ possesses a tracial state if and only if the amenable radical of $G$ is open. \end{prop}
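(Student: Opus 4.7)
The statement combines two equivalences. The second, that $C^*_\lm(G)$ admits a tracial state if and only if the amenable radical $R_a(G)$ is open, is a known characterization for locally compact groups (extending the Breuillard--Kalantar--Kennedy--Ozawa theorem from the discrete case), and I would simply cite it. The content of the proof therefore lies in the first equivalence, namely that $\G_a$ is topologically inner amenable iff $C^*_\lm(G)$ carries a tracial state.

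My first step is to unwind the $\lhd$-action on the generators. Using $\Gam^l(T)=W^*(1\ten T)W$ with the commutative left fundamental unitary, a direct computation should yield
\begin{equation*}
\lm(\psi)\lhd f=\int_G f(s)\,\lm(s)^*\lm(\psi)\lm(s)\,ds, \ \ \ \psi,f\in\LO,
\end{equation*}
the integral converging in norm because $s\mapsto\lm(s)^*\lm(\psi)\lm(s)$ is norm continuous (which in turn follows from continuity of conjugation in $\LO$ together with norm density of $\lm(\LO)$ in $C^*_\lm(G)$).

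For the forward direction, assuming $m\in C^*_\lm(G)^*$ is topologically inner invariant, the invariance condition applied to $\hat{x}=\lm(\psi)$ becomes an averaged identity; taking $f$ to be an approximate identity in $\LO$ concentrated at an arbitrary $s_0\in G$ then produces the pointwise conjugation invariance $m(\lm(s_0)^*\lm(\psi)\lm(s_0))=m(\lm(\psi))$. Norm density of $\lm(\LO)$ in $C^*_\lm(G)$ extends this to $m(\lm(s)^*c\lm(s))=m(c)$ for all $s\in G$ and $c\in C^*_\lm(G)$. The manipulation
\begin{equation*}
m(\lm(s)c)=m(\lm(s)^*\lm(s)c\lm(s))=m(c\lm(s))
\end{equation*}
followed by Bochner integration against $\psi\in\LO$ then upgrades this to $m(\lm(\psi)c)=m(c\lm(\psi))$, and a final density argument yields $m(ab)=m(ba)$ on $C^*_\lm(G)$. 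Conversely, a tracial state satisfies $m(\lm(s)^*\lm(\psi)\lm(s))=m(\lm(\psi))$ automatically, so integrating against $f\in\LO$ recovers topological inner invariance on $\lm(\LO)$, which extends to all of $C^*_\lm(G)$ by complete contractivity of the module map $\hat{x}\mapsto\hat{x}\lhd f$ and norm density.

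The main technical point needing care is the continuity/density argument: since $m$ is only a state on the $C^*$-algebra and is not assumed normal on $VN(G)$, I cannot pass limits weak*-continuously and must rely throughout on norm convergence, both in the Bochner integrals defining the $\lhd$-action and in approximating arbitrary elements of $C^*_\lm(G)$ by elements of $\lm(\LO)$.
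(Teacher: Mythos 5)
Your proposal is correct, and its skeleton matches the paper's: cite the literature (Forrest--Spronk--Wiersma, Kennedy--Raum) for the equivalence between existence of a tracial state on $C^*_\lm(G)$ and openness of the amenable radical, then prove that topological inner amenability of $\G_a$ is equivalent to traciality. The differences are in how the two directions are executed. For ``topologically inner invariant $\Rightarrow$ tracial,'' the paper first extracts conjugation invariance of $m$ exactly as you do, but then views $m$ as a positive definite function in $B_\lm(G)$ satisfying $m(s)=m(tst^{-1})$ and performs the integral computation at the level of functions; you stay entirely at the operator level, using $m(\lm(s)c)=m(\lm(s)^*\lm(s)c\lm(s))=m(c\lm(s))$ and Bochner integration against $\psi\in\LO$. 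Both work, and your version is arguably more self-contained. For the converse, the paper simply cites \cite[Proposition 3.3]{NV} (applicable since $\G_a$ has trivial scaling group), whereas you recompute it directly; that is fine, but note one small point you gloss over: for non-discrete $G$ the unitaries $\lm(s)$ do not lie in $C^*_\lm(G)$, so the identity $m(\lm(s)^*\lm(\psi)\lm(s))=m(\lm(\psi))$ is not literally an instance of $m(ab)=m(ba)$; you need the standard fact that a tracial state extends to a tracial state on the multiplier algebra (equivalently, that $m(u^*au)=m(a)$ for unitaries $u\in M(C^*_\lm(G))$ and $a\in C^*_\lm(G)$, proved by strict continuity of the extension). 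With that remark inserted, your argument is complete.
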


\begin{proof} The existence of a tracial state on $C_\lm^*(G)$ was recently investigated by Forrest--Spronk--Wiersma \cite{FSW}, and Kennedy--Raum \cite{KenR}, where it was shown to be equivalent to the openness of the amenable radical of $G$, that is, the largest amenable normal subgroup. 

By \cite[Proposition 3.3]{NV} $\G_a$ is topologically inner amenable if $C_\lm^*(G)$ has a tracial state. Conversely, if $m$ is an inner invariant state on $C_\lm^*(G)$, it follows as in the proof of Proposition \ref{p:equal} that $m$ is $G$-invariant under the canonical conjugation action. Viewing $m$ as a positive definite function in $B_\lm(G)$, it follows that $m(s)=m(tst^{-1})$, $s,t\in G$. A simple integral calculation then shows that $m$ is a tracial state on $C_\lm^*(G)$.\end{proof}

\begin{remark}\label{r:6} At first glance one might think that inner amenability implies topological inner amenability via the restriction of an inner invariant state $m$ on $\LIQH$ to $C_0(\h{\G})$. This is not the case however. In \cite{Su} Suzuki provided elementary constructions of non-discrete $C^*$-simple groups of the form $G=\bigoplus_{n\in\N} \Gamma_n\rtimes \prod_{n\in\N} F_n$ where $F_n$ is a finite group acting on the discrete group $\Gamma_n$ for which $C^*_\lm(\Gamma_n\rtimes F_n)$ admits a unique tracial state. In \cite[Remark 2.5, Remark 2.6 (i)]{FSW}, it was shown that each compactly generated open subgroup $H$ of $G$ is IN (meaning $H$ has a compact conjugation invariant neighbourhood of the identity), and that one may build a resulting net of normalized characteristic functions whose vector functionals cluster to an inner invariant state on $VN(G)$. Thus, $G$ is inner amenable. However, in \cite[Remark 2.5]{FSW}, the authors also show that the continuous function $m$ in $B_\lm(G)$ associated to a tracial state on $C^*_\lm(G)$ must be discontinuous. Whence, there is no tracial state on $C^*_\lm(G)$ and $G$ is not topologically inner amenable by Proposition \ref{p:TIA}. Therefore, there are ``continuity restrictions'' which forbid the restriction of an inner invariant state on $VN(G)$ to a tracial state on $C^*_\lm(G)$.

In the other direction, in general it is not possible to lift a tracial state on $C^*_\lm(G)$ to an inner invariant mean on $VN(G)$. In \cite[Remark 2.6 (ii)]{FSW} it is shown that $\R^2\rtimes \F_6$ is topologically inner amenable but not inner amenable, where $\F_6$ is viewed as a closed subgroup of $SL(2,\R)$.\end{remark}

\begin{prop}\label{p:amen=>IA} Let $\G$ be a locally compact quantum group. If $\h{\G}$ is co-amenable then $\G$ is strongly inner amenable. If $\G$ is amenable then it is inner amenable.
\end{prop}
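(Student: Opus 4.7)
The plan is to treat the two assertions by different methods: the second by a direct averaging construction, and the first by a careful choice of approximating unit vectors in the natural positive cone of $\LIQH$ that simultaneously controls the left fundamental unitary $W$ and the twist $\sigma V\sigma$. For the second assertion, let $m$ be a left invariant mean on $\LIQ$ and fix any normal state $\om$ on $\LIQH$. The restriction $\Gam^l|_{\LIQH}:\LIQH\to\LIQ\oten\LIQH$, $\Gam^l(\h{x}) = W^*(1\ten\h{x})W$, is a normal unital $*$-homomorphism, so $M := (m\ten\om)\circ\Gam^l|_{\LIQH}$ is a state on $\LIQH$. Coassociativity $(\id\ten\Gam^l)\Gam^l = (\Gam^l\ten\id)\Gam^l$ together with $\Gam^l|_{\LIQ} = \Gam$ yields
$$\Gam^l(\h{x}\lhd f) = (f\ten\id\ten\id)((\Gam\ten\id)(\Gam^l(\h{x}))),$$
which expresses the $\lhd$-action of $f$ on $\h{x}$ as the right convolution $(\cdot\star f)$ applied to the first leg of $\Gam^l(\h{x})$. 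Applying $m\ten\om$ and invoking left invariance $m(a\star f) = \la f,1\ra m(a)$ gives $M(\h{x}\lhd f) = \la f,1\ra M(\h{x})$, so $M$ is an inner invariant state.

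For the first assertion, by B\'edos--Tuset co-amenability of $\h{\G}$ yields a bounded approximate identity of states $(\h{f}_i)\subset\LOQH$ that (after passing to a subnet) converges weak$^*$ in $C_0(\h{\G})^*$ to the counit $\h{\ep}$. Using standard form theory, realize $\h{f}_i = \om_{\xi_i}|_{\LIQH}$ with $\xi_i$ in the natural positive cone $P^\natural$ of $\LIQH$ on $\LTQ$; then $\h{J}\xi_i = \xi_i$, hence $U^*\xi_i = J\h{J}\xi_i = J\xi_i$. Since $(\om_\eta\ten\id)(W)\in C_0(\h{\G})$ and $(\id\ten\h{\ep})(W) = 1$,
$$\la(\id\ten\om_{\xi_i})(W)\eta,\eta\ra = \om_{\xi_i}((\om_\eta\ten\id)(W))\to\|\eta\|^2,$$
and unitarity of $W$ yields $\|W(\eta\ten\xi_i)-\eta\ten\xi_i\|\to 0$ for every $\eta\in\LTQ$.

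To transfer the approximation to $\sigma V\sigma$, observe that antilinearity of $J$ gives $\om_{J\xi}(\h{x}) = \overline{\om_\xi(J\h{x}J)} = \overline{\om_\xi(\h{R}(\h{x}^*))}$, where $\h{R}$ is the unitary antipode of $\h{\G}$. Together with $\h{\ep}\circ\h{R} = \h{\ep}$ and the character identity $\h{\ep}(\h{x}^*) = \overline{\h{\ep}(\h{x})}$, this shows $\om_{J\xi_i}\to\h{\ep}$ weak$^*$ on $C_0(\h{\G})$ as well, whence $\|W(\eta\ten U^*\xi_i)-\eta\ten U^*\xi_i\|\to 0$. Using $\sigma V\sigma = (1\ten U)W(1\ten U^*)$ from (\ref{e:WV}),
$$\|\sigma V\sigma(\eta\ten\xi_i)-\eta\ten\xi_i\| = \|W(\eta\ten U^*\xi_i)-\eta\ten U^*\xi_i\|\to 0,$$
and the decomposition $W\sigma V\sigma - 1 = W(\sigma V\sigma-1)+(W-1)$ combined with the triangle inequality gives $\|W\sigma V\sigma(\eta\ten\xi_i)-\eta\ten\xi_i\|\to 0$, so $\G$ is strongly inner amenable.

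The principal obstacle is coordinating the approximation across both $W$ and $\sigma V\sigma$ with a single net $(\xi_i)$. The choice $\xi_i\in P^\natural$ is pivotal: the fixity $\h{J}\xi_i = \xi_i$ identifies $U^*\xi_i$ with $J\xi_i$, and the antipode-invariance of the counit $\h{\ep}\circ\h{R} = \h{\ep}$ then forces the conjugated net $(J\xi_i)$ to approximate $\h{\ep}$ as well, allowing both co-representations to be controlled by the same family of vectors.
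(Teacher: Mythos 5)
Your proof is correct, and the two halves relate to the paper's argument quite differently. For the first assertion you follow the same strategy as the paper (approximate the counit by vector states $\om_{\xi_i}$ with $\h{J}\xi_i=\xi_i$ and deduce $\norm{W(\eta\ten\xi_i)-\eta\ten\xi_i}\to0$), but your transfer to $\sigma V\sigma$ via $\sigma V\sigma=(1\ten U)W(1\ten U^*)$ and the auxiliary claim $\om_{J\xi_i}\to\h{\ep}$ (through $\h{\ep}\circ\h{R}=\h{\ep}$) is a correct but avoidable detour: the paper gets the same conclusion in one line from $\sigma V\sigma=(\h{J}\ten\h{J})W^*(\h{J}\ten\h{J})$ together with $\h{J}\xi_i=\xi_i$, with no need to re-identify the weak* limit of the conjugated net. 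For the second assertion your route is genuinely different and substantially more elementary: setting $M=(m\ten\om)\circ\Gam^l|_{\LIQH}$ and using coassociativity of $\Gam^l$, the identity $\Gam^l|_{\LIQ}=\Gam$, and left invariance of $m$ gives inner invariance directly, using only a \emph{left} invariant mean and no Arens products, no conditional expectation $\Theta^r(m)$, and no antipode. What the paper's longer argument buys is a strictly stronger conclusion --- a state $M$ on all of $\BLTQ$ satisfying $\la M,\rho\rhd T\ra=\la M,T\lhd\rho\ra$ for every $T\in\BLTQ$ and $\rho\in\TCQ$ --- which is what is actually invoked later (e.g.\ in Corollary~\ref{c:LP} via ``the proof of'' this proposition); your construction yields only the restriction to $\LIQH$, which fully proves the proposition as stated but would not substitute for the stronger statement where it is cited.
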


\begin{proof} If $\h{\G}$ is co-amenable, let $E\in\LIQH^*$ be a co-unit and approximate $E$ in the weak* topology by vector states $\om_{\xi_i}$ with $\h{J}\xi_i=\xi_i$ (since $\LIQH\curvearrowright\LTQ$ is standard). Then $1=(\id\ten E)(W)=\lim_i(\id\ten\om_{\xi_i})(W)$ strongly, from which it follows that
$$\norm{W(\eta\ten\xi_i)-(\eta\ten\xi_i)}\rightarrow0, \ \ \ \eta\in\LTQ.$$
Since $\sigma V\sigma=(\h{J}\ten\h{J})W^*(\h{J}\ten\h{J})$, and $\h{J}\xi_i=\xi_i$, we also have
$$\norm{\sigma V\sigma (\eta\ten \xi_i)-(\eta\ten\xi_i)}\rightarrow0, \ \ \ \eta\in\LTQ.$$
Whence, $\G$ is strongly inner amenable.

Now, suppose $\G$ is amenable and let $m\in\LIQ^*$ be a two-sided invariant mean. We will show a stronger statement by providing a state $M\in\BLTQ^*$ such that
$$\la M,\rho\rhd T\ra=\la M,T\lhd\rho\ra, \ \ \ T\in\BLTQ, \ \rho\in\TCQ,$$
upon which restriction to $\LIQH$ is the desired state.
Letting $m$ also denote its restriction to $\LUC:=\la\LIQ\star\LOQ\ra$, let $m_0:=\rho_0\circ\Theta^r(m)\in\BLTQ^*$, where $\rho_0\in\TCQ$ is a fixed normal state, and 
$$\Theta^r:\LUC^*\ni n\mapsto (T\mapsto (\id\ten n)V^*(T\ten 1)V)\in\CBTCrr$$
is the canonical completely contractive homomorphism (see \cite[\S3]{CN} or \cite[Proposition 6.5]{HNR2}). Since $\LUC=\la\BLTQ\rhd\TCQ\ra$ \cite[Proposition 5.3]{HNR2}, one may view the map $\Theta^r$ as follows: 
$$\la\Theta^r(n)(T),\rho\ra=\la n,T\rhd\rho\ra, \ \ \ n\in\LUC^*, \ T\in\BLTQ, \ \rho\in\TCQ.$$
Consider the state $R^*(m_0)\sq m_0\in\BLTQ^*$ where $R$ is the extended unitary antipode and $\sq$ is the left Arens product on $\BLTQ^*$ extending the multiplication in $\mc{T}_\rhd=(\TCQ,\rhd)$.

Fix $\rho,\om\in\TCQ$ and $T\in\BLTQ$. Firstly, since $m\in\LUC^*$ is an invariant mean $m\sq\pi(\rho)=\la\pi(\rho),1\ra m=\la\rho,1\ra m$. Thus,
\begin{align*}\la m_0\sq\rho,T\ra&=\la m_0,\rho\rhd T\ra=\la\rho_0,\Theta^r(m)\circ\Theta^r(\pi(\rho))(T)\ra\\
&=\la\rho_0,\Theta^r(m\sq\pi(\rho))(T)\ra=\la\rho,1\ra\la\rho_0,\Theta^r(m)(T)\ra\\
&=\la\rho,1\ra\la m_0,T\ra.
\end{align*}
Hence, $m_0\sq\rho=\la\rho,1\ra m_0$. Secondly, $\Theta^r(m)$ is a conditional expectation onto $\LIQH$ commuting with both the right $\mc{T}_\rhd$- and right $\mc{T}_\lhd$-actions \cite[Theorem 4.9]{CN}. Since $\hat{x}\rhd\om=\la\om,\hat{x}\ra1$, $\hat{x}\in\LIQH$ and $\om\in\TCQ$, we also have
\begin{align*}\la m_0\sq(T\lhd\rho),\om\ra&=\la m_0,(T\lhd\rho)\rhd\om\ra=\la\rho_0,\Theta^r(m)((T\lhd\rho)\rhd\om)\ra\\
&=\la\rho_0,\Theta^r(m)(T\lhd\rho)\rhd\om\ra=\la\rho_0,1\ra\la\Theta^r(m)(T\lhd\rho),\om\ra\\
&=\la\rho_0,1\ra\la\Theta^r(m)(T)\lhd\rho,\om\ra=\la\rho_0,1\ra\la\Theta^r(m)(T),\rho\lhd\om\ra\\
&=\la\rho_0,\Theta^r(m)(T)\rhd(\rho\lhd\om)\ra=\la\rho_0,\Theta^r(m)(T\rhd(\rho\lhd\om))\ra\\
&=\la m_0,T\rhd(\rho\lhd\om)\ra=\la m_0\sq T,\rho\lhd\om\ra\\
&=\la (m_0\sq T)\lhd\rho,\om\ra.\\
\end{align*}
Thus, $m_0\sq(T\lhd\rho)=(m_0\sq T)\lhd\rho$. Putting things together, on the one hand we obtain
$$\la R^*(m_0)\sq m_0,\rho\rhd T\ra=\la R^*(m_0),(m_0\sq\rho)\sq T\ra=\la\rho,1\ra\la R^*(m_0),m_0\sq T\ra,$$
and on the other,
\begin{align*}\la R^*(m_0)\sq m_0,T\lhd\rho\ra&=\la R^*(m_0),m_0\sq(T\lhd\rho)\ra=\la R^*(m_0),(m_0\sq T)\lhd\rho\ra\\
&=\la m_0,R((m_0\sq T)\lhd\rho)\ra=\la m_0,R_*(\rho)\rhd R(m_0\sq T)\ra\\
&=\la m_0\sq R_*(\rho),R(m_0\sq T)\ra=\la R_*(\rho),1\ra\la m_0, R(m_0\sq T)\ra\\
&=\la\rho,1\ra\la R^*(m_0),m_0\sq T\ra=\la\rho,1\ra\la R^*(m_0)\sq m_0, T\ra.
\end{align*}
Therefore, $M:=R^*(m_0)\sq m_0$ is the required state.
\end{proof}

Combining \cite[Corollary 4.10]{CN} with (the proof of) Proposition \ref{p:amen=>IA}, we obtain a quantum group analogue of a well-known result of Lau--Paterson \cite[Corollary 3.2]{LP1}.

\begin{cor}\label{c:LP} A locally compact quantum group $\G$ is amenable if and only if it is inner amenable and $\LIQH$ is 1-injective in $\C\hskip2pt\mathbf{mod}$.
\end{cor}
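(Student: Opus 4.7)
The forward direction $(\Rightarrow)$ is immediate from what has already been assembled: Proposition \ref{p:amen=>IA} gives inner amenability of $\G$ from amenability, and amenability of $\G$ is well-known to imply injectivity of $\LIQH$; for a von Neumann algebra the latter coincides with 1-injectivity as an operator space, i.e., 1-injectivity in $\C\hskip2pt\mathbf{mod}$.

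For the converse $(\Leftarrow)$, the plan is to recycle the machinery in the proof of Proposition \ref{p:amen=>IA}, replacing the conditional expectation $\Theta^r(m)$ (which was manufactured there from an invariant mean on $\LIQ$, and which was by construction equivariant for the $\mc{T}_\rhd$-action) by a conditional expectation $E\colon\BLTQ\to\LIQH$ coming from the hypothesized 1-injectivity in $\C\hskip2pt\mathbf{mod}$, and then compensating for the loss of built-in equivariance by composing with the inner invariant state. Concretely, let $E$ be such a completely contractive projection, automatically $\LIQH$-bimodular by Tomiyama, let $m\in\LIQH^*$ be an inner invariant state, and set
\begin{equation*}
m_0 := m\circ E\in\BLTQ^*.
\end{equation*}
The goal is then to reproduce, \emph{mutatis mutandis}, the two key identities
\begin{equation*}
m_0\sq\rho = \la\rho,1\ra m_0, \qquad m_0\sq(T\lhd\rho) = (m_0\sq T)\lhd\rho,
\end{equation*}
valid for $\rho\in\TCQ$ and $T\in\BLTQ$. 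Given these, forming $M:=R^*(m_0)\sq m_0$ as at the end of the proof of Proposition \ref{p:amen=>IA} produces a state on $\BLTQ$ satisfying $\la M,\rho\rhd T\ra=\la M,T\lhd\rho\ra$, at which point \cite[Corollary 4.10]{CN}, which characterizes amenability of $\G$ precisely through such $\TCQ$-module symmetries of states on $\BLTQ$ (equivalently, via module-equivariant conditional expectations $\BLTQ\to\LIQH$), delivers amenability of $\G$.

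The technical heart, and the main obstacle, is the first identity $m_0\sq\rho=\la\rho,1\ra m_0$, which unfolds to
\begin{equation*}
\la m, E(\rho\rhd T)\ra = \la\rho,1\ra\la m,E(T)\ra \qquad (\rho\in\TCQ,\ T\in\BLTQ).
\end{equation*}
In the proof of Proposition \ref{p:amen=>IA} this was automatic because $\Theta^r(m)$ was a right $\mc{T}_\rhd$-module conditional expectation onto $\LIQH$, whereas here $E$ enjoys no such equivariance a priori. The plan is to leverage two structural facts: first, that the $\rhd$-action of $\TCQ$ on $\LIQH\subset\BLTQ$ is scalar ($\hat{y}\rhd\rho=\la\rho,1\ra\hat{y}$) since $V\in\LIQHP\oten\LIQ$ commutes with $\LIQH\ten 1$; second, that $\LIQH$-bimodularity of $E$ combined with the adjoint relation \eqref{e:WV} intertwining $V$ and $W$ lets one rewrite $E(\rho\rhd T)$ in terms of a $\lhd$-action on $E(T)\in\LIQH$, whereupon inner invariance $\la m,\hat{x}\lhd f\ra=\la f,1\ra\la m,\hat{x}\ra$ collapses the expression to $\la\rho,1\ra\la m,E(T)\ra$. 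The analogous right-module identity is proved similarly, using the generalized antipode relations \eqref{e:R}. Once these identities are in hand, the construction of $M$ and the verification that $M|_{\LIQ}$ is a two-sided invariant mean proceeds line for line with Proposition \ref{p:amen=>IA}, completing the converse direction.
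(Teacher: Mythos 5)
Your forward direction is fine. For the converse you start in the right place --- the object to study is indeed $m\circ E$, where $E\colon\BLTQ\to\LIQH$ is a conditional expectation furnished by $1$-injectivity and $m$ is the inner invariant state --- but the argument you build around it has two genuine gaps. First, the identity you call the technical heart, $\la m,E(\rho\rhd T)\ra=\la\rho,1\ra\la m,E(T)\ra$, is not accessible by the means you propose: the $\rhd$-action is implemented by $V\in\LIQHP\oten\LIQ$, so $\rho\rhd T$ is a weak*-convergent combination of terms $a^*Tb$ with $a,b\in\LIQHP=\LIQH'$, and Tomiyama bimodularity of $E$ is over $\LIQH$, not over its commutant; the relation (\ref{e:WV}) does not repair this, since $U=\h{J}J$ does not normalize $\LIQH$ and $W(1\ten\cdot)W^*$ is not the $\lhd$-action (which uses $W^*(1\ten\cdot)W$). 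Second, even granting both identities, the endgame fails: a state $M\in\BLTQ^*$ with $\la M,\rho\rhd T\ra=\la M,T\lhd\rho\ra$ does \emph{not} characterize amenability. In the proof of Proposition~\ref{p:amen=>IA} such a state is built \emph{from} amenability and used only to extract an inner invariant state by restriction to $\LIQH$; restricted to $\LIQ$ it gives merely the condition (\ref{e:1}) of \cite{GR}, which the introduction flags as too weak. Concretely, for $\G=\G_a(\F_2)$ the vector state $\om_{\delta_e}$ is invariant under the conjugation representation and hence satisfies the symmetry, yet $\G_a(\F_2)$ is not amenable. The cited \cite[Corollary 4.10]{CN} concerns covariant conditional expectations onto $\LIQH$, not such states, so it cannot be invoked there. (Note also that your first identity, restricted to $\LIQ$ where $\rho\rhd x=\pi(\rho)\star x$, would already make $m_0|_{\LIQ}$ an invariant mean and finish the proof on the spot; that the remaining construction is redundant is a sign the identity is as hard as the theorem itself.)

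The intended argument is the direct quantization of Lau--Paterson and runs through the \emph{other} action. Since $W\in\LIQ\oten\LIQH$, the right $\lhd$-action $T\lhd f=(f\ten\id)(W^*(1\ten T)W)$ is implemented with coefficients in $\LIQH$, and \cite[Theorem 4.9, Corollary 4.10]{CN} supply precisely the fact that the conditional expectation may be taken to satisfy $E(T\lhd f)=E(T)\lhd f$ (this is the nontrivial point: for a non-normal $E$ one cannot simply push bimodularity through the infinite row--column sums implementing the slice map). Granting this, set $n:=m\circ E|_{\LIQ}$. For $x\in\LIQ$ one has $x\lhd f=(f\ten\id)\Gam(x)=x\star f$, so
\begin{equation*}
\la n,x\star f\ra=\la m,E(x\lhd f)\ra=\la m,E(x)\lhd f\ra=\la f,1\ra\la m,E(x)\ra=\la f,1\ra\la n,x\ra,
\end{equation*}
the third equality being exactly the inner invariance of $m$ applied to $E(x)\in\LIQH$. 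Thus $n$ is a left invariant mean and $\G$ is amenable. No Arens products, no antipode twist, and no state on all of $\BLTQ$ are needed: the entire content of the converse is the pair of facts ``$E$ is $\lhd$-covariant'' and ``$m$ is $\lhd$-invariant on $\LIQH$.''
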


Generalizing the hereditary property in the group setting, we show that inner amenability passes to closed quantum subgroups.

\begin{prop}\label{p:qs} Let $\G$ and $\Hb$ be locally compact quantum groups such that $\Hb$ is a closed quantum subgroup of $\G$ in the sense of Vaes. If $\G$ is inner amenable then $\Hb$ is inner amenable.\end{prop}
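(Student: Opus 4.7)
\emph{Plan.} The natural candidate for an inner invariant state on $\LIHH$ is $n := m\circ\gamma$, where $m\in\LIQH^*$ is an inner invariant state for $\G$. Since $\gamma$ is a unital $*$-monomorphism, $n$ is automatically a state on $\LIHH$; the task is to verify that
\begin{equation*}
\langle n,\hat{y}\lhd_\Hb g\rangle = \langle g,1\rangle\langle n,\hat{y}\rangle,\qquad \hat{y}\in\LIHH,\ g\in\LOH.
\end{equation*}

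The plan is to lift everything to the universal level. By the reformulation of Daws-Kasprzak-Skalski-So\l tan, the Vaes condition for $\Hb\subseteq\G$ is equivalent to the existence of a surjective Hopf $*$-homomorphism $\pi:C_u(\G)\to C_u(\Hb)$ whose associated morphism of universal duals agrees with $\gamma$ at the von Neumann level. Dualising $\pi$ provides a completely isometric algebra embedding $\pi^*:C_u(\Hb)^*\hookrightarrow C_u(\G)^*$ that lifts the ideal inclusion $\LOH\subseteq C_u(\Hb)^*$ into $C_u(\G)^*$ and preserves the augmentation: $\langle\pi^*(g),1\rangle = \langle g,\pi(1)\rangle = \langle g,1\rangle$, once $\pi$ is extended to the multiplier algebras. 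Combining this with the compatibility of the universal bicharacters under $\pi$ and $\gamma$ (implicit in the DKSS equivalence), one aims to establish the intertwining
\begin{equation*}
\gamma(\hat{y}\lhd_\Hb g) \;=\; \gamma(\hat{y})\lhd_\G^u\pi^*(g),\qquad \hat{y}\in\LIHH,\ g\in\LOH,
\end{equation*}
where $\lhd_\G^u$ is the natural extension of the $\lhd_\G$-action to a $C_u(\G)^*$-module action on $\LIQH$ defined via the universal half-reduced bicharacter.

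The inner invariance of $m$ on $\LOQ$ then propagates to $C_u(\G)^*$: the identity $(\id\otimes m)\Gamma^l_\G(\hat{x}) = \langle m,\hat{x}\rangle\cdot 1_\G$ in $\LIQ$ lifts, through the universal half-reduced coaction, to $(\id\otimes m)\Gamma^{l,u}_\G(\hat{x}) = \langle m,\hat{x}\rangle\cdot 1$ in $M(C_u(\G))$, yielding $\langle m,\hat{x}\lhd^u\mu\rangle = \langle\mu,1\rangle\langle m,\hat{x}\rangle$ for every $\mu\in C_u(\G)^*$. Substituting $\mu=\pi^*(g)$ and $\hat{x}=\gamma(\hat{y})$, then combining the intertwining and augmentation identities, gives
\begin{equation*}
\langle n,\hat{y}\lhd_\Hb g\rangle = \langle m,\gamma(\hat{y})\lhd_\G^u\pi^*(g)\rangle = \langle\pi^*(g),1\rangle\langle m,\gamma(\hat{y})\rangle = \langle g,1\rangle\langle n,\hat{y}\rangle,
\end{equation*}
completing the proof. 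The main obstacle is establishing the intertwining $\gamma(\hat{y}\lhd_\Hb g) = \gamma(\hat{y})\lhd_\G^u\pi^*(g)$, which requires careful manipulation of the universal bicharacters and their equivariance under $\pi$ and $\gamma$ — a standard but delicate ingredient of the quantum subgroup machinery; the universal lift of inner invariance is a secondary technical point handled by the coaction structure of $\Gamma^{l,u}_\G$.
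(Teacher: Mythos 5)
Your overall architecture is essentially the paper's: set $n=m\circ\gamma$, pass to the universal level via the DKSS morphism $\pi_{\G,\Hb}:C_u(\G)\to C_u(\Hb)$, and use the compatibility of the half-lifted bicharacters, namely $(\id\ten\gamma)(\W_{\Hb})=(\pi_{\G,\Hb}\ten\id)(\W_{\G})$, to obtain precisely your intertwining identity $\gamma(\hat{y}\lhd_{\Hb}g)=\gamma(\hat{y})\lhd^u_{\G}\pi^*(g)$; the paper carries out exactly this computation, so that part of your plan is sound. The one point of divergence is the final step, and it is also the one genuine soft spot in your write-up. You assert that the reduced invariance $(\id\ten m)\Gam^l(\hat{x})=\la m,\hat{x}\ra 1$ ``lifts'' to the universal slice $(\id\ten m)(\W_{\G}^*(1\ten\hat{x})\W_{\G})=\la m,\hat{x}\ra 1$ in $M(C_u(\G))$. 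That identity is true, but it is not automatic from ``the coaction structure'': the obvious attempt--apply $\pi_{\G}$ and note the reduced image is scalar--fails because $\pi_{\G}$ is injective only when $\G$ is co-amenable, and a direct proof needs a coassociativity computation with the mixed pentagon relation for $\W_{\G}$, plus some care since $m$ is not normal. The paper avoids the universal lift altogether with a cleaner device: since $m$ is inner invariant, one may insert $\lhd_{\G}f$ for an arbitrary state $f\in\LOQ$ at no cost, and then $\pi_{\G,\Hb}^*(\pi_{\Hb}^*(g))\star_{\G}f$ lands in $\LOQ$ because $\LOQ$ is a closed two-sided ideal in $C_u(\G)^*$, so the invariance hypothesis applies verbatim and yields $\la g,1\ra\la f,1\ra\la n,\hat{y}\ra=\la g,1\ra\la n,\hat{y}\ra$. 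Supplying either that convolution trick or a full proof of your universal lift is what is needed to close the argument; as written, the lift is asserted rather than proved.
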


\begin{proof} Let $\h{m}$ be an inner invariant state in the sense of Definition \ref{d:WIA} (ii), let $\h{n}:=\h{m}\circ\gamma\in\LIHH^*$, and let $W_{\G}$ and $W_{\Hb}$ denote the left fundamental unitaries of $\G$ and $\Hb$, respectively. We also denote by $\mathbb{W}_{\G}\in M(C_u(\G)\ten_{\min} C_u(\h{\G}))$ and $\mathbb{W}_{\Hb}\in M(C_u(\Hb)\ten_{\min} C_u(\h{\Hb}))$ the universal multiplicative unitaries satisfying $(\pi_{\G}\ten\pi_{\h{\G}})(\mathbb{W}_{\G})=W_{\G}$ and $(\pi_{\Hb}\ten\pi_{\h{\Hb}})(\mathbb{W}_{\Hb})=W_{\Hb}$. Finally, we define
$$\W_{\G}:=(\id\ten\pi_{\h{\G}})(\mathbb{W}_{\G})\in M(C_u(\G)\ten_{\min} C_0(\h{\G})), \ \ \ \W_{\Hb}:=(\id\ten\pi_{\h{\Hb}})(\mathbb{W}_{\Hb})\in M(C_u(\Hb)\ten_{\min} C_0(\h{\Hb})).$$
Let $\pi_{\G,\Hb}:C_u(\G)\rightarrow C_u(\Hb)$ be the surjection from \cite[Theorem 3.5]{DKSS}. Its dual morphism is a non-degenerate $*$-homomorphism $\hat{\pi}_{\G,\Hb}:C_u(\h{\Hb})\rightarrow M(C_u(\h{\G}))$ satisfying
$$(\pi_{\G,\Hb}\ten\id)(\mathbb{W}_{\G})=(\id\ten\hat{\pi}_{\G,\Hb})(\mathbb{W}_{\Hb}).$$
From the relation $\gamma\circ\pi_{\wh{\Hb}}=\pi_{\wh{\G}}\circ\hat{\pi}_{\G,\Hb}$ \cite[(3.1)]{DKSS}, we have
\begin{align*}(\id\ten\gamma)(\W_{\Hb})&=(\id\ten\gamma\circ\pi_{\wh{\Hb}})(\mathbb{W}_{\Hb})\\
&=(\id\ten\pi_{\wh{\G}}\circ\hat{\pi}_{\G,\Hb})(\mathbb{W}_{\Hb})\\
&=(\id\ten\pi_{\wh{\G}})(\pi_{\G,\Hb}\ten\id)(\mathbb{W}_{\G})\\
&=(\pi_{\G,\Hb}\ten\id)(\W_{\G}).\end{align*}
Thus, for any $\h{x}\in\LIHH$ and $g\in\LOH$ we have
\begin{align*}\la\h{n},\h{x}\lhd_{\Hb}g\ra&=\la\h{m},\gamma((g\ten \id)W_{\Hb}^*(1\ten\h{x})W_{\Hb})\ra\\
&=\la\h{m},\gamma((\pi_{\Hb}^*(g)\ten \id)(\W_{\Hb}^*(1\ten\h{x})\W_{\Hb}))\ra\\
&=\la\h{m},(\pi_{\Hb}^*(g)\ten \id)(\id\ten\gamma)(\W_{\Hb})^*(1\ten\gamma(\h{x}))(\id\ten\gamma)(\W_{\Hb}))\ra\\
&=\la\h{m},(\pi_{\G,\Hb}^*(\pi_{\Hb}^*(g))\ten \id)(\W_{\G}^*(1\ten\gamma(\h{x}))\W_{\G})\ra\\
&=\la\h{m},\Theta^l(\pi_{\G,\Hb}^*(\pi_{\Hb}^*(g)))(\gamma(\hat{x}))\ra,\end{align*}
where the last equality follows from Lemma \ref{l:lemma}. By invariance of $\h{m}$, we may convolve with any state $f\in\LOQ$ to obtain
\begin{align*}\la\h{n},\h{x}\lhd_{\Hb}g\ra&=\la\h{m},\Theta^l(\pi_{\G,\Hb}^*(g))(\gamma(\hat{x}))\lhd_{\G}f\ra\\
&=\la\h{m},\Theta^l(\pi_{\G,\Hb}^*(\pi_{\Hb}^*(g))\star_{\G}f)(\gamma(\hat{x}))\ra\\
&=\la\h{m},\gamma(\hat{x})\lhd_{\G}(\pi_{\G,\Hb}^*(\pi_{\Hb}^*(g))\star_{\G}f)\ra\\
&=\la\pi_{\G,\Hb}^*(\pi_{\Hb}^*(g))\star_{\G}f,1\ra\la\h{m},\gamma(\hat{x})\ra\\
&=\la g,1\ra\la\h{n},\hat{x}\ra.\end{align*}
\end{proof}

\subsection{Examples arising from the bicrossed product construction} Let $G, G_1$ and $G_2$ be locally compact groups with fixed left Haar measures for which there exists a homomorphism $i:G_1\rightarrow G$ and an anti-homomorphism $j:G_2\rightarrow G$ which have closed ranges and are homeomorphisms onto these ranges. Suppose further that $G_1\times G_2\ni (g,s)\mapsto i(g)j(s)\in G$ is a homeomorphism onto an open subset of $G$ having complement of measure zero. Then $(G_1,G_2)$ is said to be a \textit{matched pair} of locally compact groups \cite[Definition 4.7]{VV}. Any matched pair $(G_1,G_2)$ determines a matched pair of actions $\alpha:G_1\times G_2\rightarrow G_2$ and $\beta:G_1\times G_2\rightarrow G_1$ satisfying mutual co-cycle relations \cite[Lemma 4.9]{VV}. It is known that the von Neumann crossed product $G_1\rtimes_\alpha L^\infty(G_2)$ admits a quantum group structure, called the \textit{bicrossed product} of the matched pair $(G_1,G_2)$. The von Neumann algebra of the dual quantum group is given by the crossed product $L^{\infty}(G_1)^\beta\ltimes G_2$, and therefore, following \cite{Majid}, we denote the bicrossed product quantum group by $VN(G_1)^\beta\bowtie_\alpha L^\infty(G_2)$. 

Below we present sufficient conditions on the matched pair $(G_1,G_2)$ under which the bicrossed product is (strongly) inner amenable. In preparation we collect some useful formulae from \cite[\S4]{VV}, to which we refer the reader for details. To ease the presentation we suppress the notations $i$ and $j$ for the embeddings into $G$.

The fundamental unitary $W$ satisfies
$$W^*\xi(g,s,h,t)=\xi(\beta_{t}(h)^{-1}g,s,h,\alpha_{\beta_t(h)^{-1}g}(s)t), \ \ \ \xi\in L^2(G_1\times G_2\times G_1\times G_2).$$
Letting $\Delta$, $\Delta_1$, and $\Delta_2$ denote the modular functions for the groups $G$, $G_1$, and $G_2$, respectively, the modular conjugation $\h{J}$ of the dual Haar weight satisfies
$$\h{J}\xi(g,s)=\Delta(\alpha_g(s))^{1/2}\Delta_1(\beta_s(g)g^{-1})^{1/2}\Delta_2(\alpha_g(s)s^{-1})^{1/2}\overline{\xi}(\beta_s(g),s^{-1}), \ \ \ \xi\in L^2(G_1\times G_2).$$
Let $\Psi:G_2\times G_1\rightarrow(0,\infty)$ be the (continuous) function determined by the Radon-Nikodym derivatives $\Psi(s,g):=\frac{d\beta_s(g)}{dg}$. It follows that $\Psi(s,g)=\Delta(\alpha_g(s))\Delta_1(\beta_s(g)g^{-1})\Delta_2(\alpha_g(s))$. The action $\beta$ determines a unitary representation of $G_2$ on $L^2(G_1)$ given by
$$v_s\xi(g)=\bigg(\frac{d\beta_{s^{-1}}(g)}{dg}\bigg)^{1/2}\xi(\beta_{s^{-1}}(g)), \ \ \ \xi\in L^2(G_1).$$
We denote by $v^1$ the corresponding action of $G_2$ on $L^1(G_1)$, given by
$$v^1_s\xi(g)=\bigg(\frac{d\beta_{s^{-1}}(g)}{dg}\bigg)f(\beta_{s^{-1}}(g)), \ \ \ f\in L^1(G_1).$$

\begin{prop}\label{p:bicrossed} Let $(G_1,G_2)$ be a matched pair of locally compact groups such that
\begin{enumerate}[label=(\roman*)]
\item $G_2$ is inner amenable,
\item $\beta$ preserves $\Delta_1$,
\item there exists an asymptotically $\beta$-invariant BAI for $L^1(G_1)$, i.e., a net $(f_i)$ of non-negative functions in $C_c(G_1)_{\norm{\cdot}_1=1}$ with $\mathrm{supp}(f_i)\rightarrow\{e\}$ satisfying
$$\norm{v^1_sf_i-f_i}_1\rightarrow0$$
uniformly on compacta.
\end{enumerate}
Then $VN(G_1)^\beta\bowtie_\alpha L^\infty(G_2)$ is strongly inner amenable.
\end{prop}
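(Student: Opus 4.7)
The plan is to produce, via a diagonal procedure, a net of unit vectors $\xi_\iota \in L^2(G_1\times G_2)$ of product form
$$\xi_{(i,j)}(g,s) = f_i(g)^{1/2}\eta_j(s),$$
where $(f_i)$ is the BAI furnished by hypothesis (iii) and $(\eta_j)$ witnesses inner amenability of $G_2$, and then verify its asymptotic invariance under the conjugation co-representation $W\sigma V\sigma$ of $\G := VN(G_1)^\beta\bowtie_\alpha L^\infty(G_2)$.

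First, by (the proof of) Proposition 3.5 applied to $G_2$, hypothesis (i) produces a net $(\eta_j)$ of unit vectors in $L^2(G_2)$ satisfying
$$\|\lambda_{G_2}(t)\eta_j-\rho_{G_2}(t)^*\eta_j\|_2\to 0$$
uniformly for $t$ in compacta of $G_2$. Second, since $v_s$ is a unitary representation of $G_2$ on $L^2(G_1)$ with $(v^1_s f)(g)^{1/2} = v_s(f^{1/2})(g)$ pointwise, the Powers--Stormer inequality $\|\phi^{1/2}-\psi^{1/2}\|_2^2\leq\|\phi-\psi\|_1$ combined with (iii) upgrades $(f_i^{1/2})$ to a net of unit vectors in $L^2(G_1)$ satisfying
$$\|v_s f_i^{1/2}-f_i^{1/2}\|_2 \to 0$$
uniformly in $s$ on compacta of $G_2$.

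With these preparations in hand, by density of simple tensors $\eta=\phi\ten\psi$ with $\phi\in C_c(G_1)$ and $\psi\in C_c(G_2)$, it suffices to verify $\|W\sigma V\sigma(\eta\ten\xi_{(i,j)}) - \eta\ten\xi_{(i,j)}\|\to 0$ along a suitable diagonal of $(i,j)$. Using the displayed formula for $W^*$ and deriving $\sigma V\sigma$ from $W$ via $V=\sigma(1\ten U)W(1\ten U^*)\sigma$ together with the displayed formula for $\widehat{J}$, one expands the inner product $\la W\sigma V\sigma(\eta\ten\xi_{(i,j)}),\eta\ten\xi_{(i,j)}\ra$ as an iterated integral over $G_1\times G_2\times G_1\times G_2$. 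Since $\mathrm{supp}(f_i)\to\{e\}$ and the maps $(g,t)\mapsto\alpha_g(t)$ and $(t,g)\mapsto\beta_t(g)$ are jointly continuous with $\alpha_e(t)=t$ and $\beta_t(e)=e$, all occurrences of $\alpha_g(t)$ and $\beta_t(g)$ weighted against $f_i$ may be replaced, up to vanishing error, by $t$ and $e$ respectively. The residual translation by $\beta_t(g)^{-1}$ on the $f_i^{1/2}$-factor is absorbed by the $v$-invariance established above, while the residual action on the $\eta_j$-factor collapses to the conjugation $t\mapsto s^{-1}ts$ for $s\in\mathrm{supp}(\psi)$, absorbed by the inner-invariance of $(\eta_j)$. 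A standard diagonal argument (or passage to a subnet) then produces the desired single-indexed net.

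The principal obstacle is the careful bookkeeping of the numerous Radon--Nikodym factors arising in the kernel computation of $W\sigma V\sigma$, contributed by $\widehat{J}$, by the cocycle $\Psi(s,g)=\Delta(\alpha_g(s))\Delta_1(\beta_s(g)g^{-1})\Delta_2(\alpha_g(s))$, and by the change of variables used to assemble $\sigma V\sigma$ from $W$. Hypothesis (ii) is essential precisely here: eliminating the $\Delta_1$-factor in $\Psi$ prevents it from producing non-vanishing corrections after localization of $f_i$ near the identity of $G_1$, and ensures that the modular contributions collapse cleanly onto the $G_2$-variable, where they are absorbed by the inner-invariant vectors $(\eta_j)$.
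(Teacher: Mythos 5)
Your overall strategy---product vectors built from the two hypotheses, a Powers--St{\o}rmer estimate to upgrade the $L^1$-invariance of $(f_i)$ to $L^2$-invariance of $(f_i^{1/2})$ under $v$, localization via $\mathrm{supp}(f_i)\to\{e\}$, absorption of the residual conjugation by the inner-invariant vectors for $G_2$, and a final diagonal argument---is exactly the paper's. But there is a genuine gap in your choice of vectors. Writing $\sigma V\sigma=(\h{J}\ten\h{J})W^*(\h{J}\ten\h{J})$ and computing $\la W\sigma V\sigma\,\zeta,\zeta\ra=\la(\h{J}\ten\h{J})W^*(\h{J}\ten\h{J})\zeta,W^*\zeta\ra$ from the displayed kernels of $W^*$ and $\h{J}$, the $L^2(G_1)$-leg of the putative invariant vector is evaluated at $\beta_u(h^{-1})^{-1}$ (with $u=\alpha_{\beta_s(g)}(s^{-1})$) in one factor and at $h$ in the other. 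So the invariance you actually need is that of $f_i^{1/2}$ under the \emph{inversion-conjugated} action $h\mapsto\beta_u(h^{-1})^{-1}$, equivalently of $U_1f_i^{1/2}$ under $v_u$, where $U_1$ is the inversion unitary on $L^2(G_1)$. Hypothesis (iii) gives invariance of $f_i^{1/2}$ under $v_u$ itself, and this is not the same thing: nothing forces $U_1f_i^{1/2}$ to be close to $f_i^{1/2}$ (the $f_i$ may be supported on shrinking non-symmetric neighbourhoods, making $U_1f_i^{1/2}$ orthogonal to $f_i^{1/2}$), and one checks that the normalized reflections $\check f_i$ need not satisfy (iii), since $\beta_s$ is not an automorphism of $G_1$ in general. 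The paper resolves this by taking $U_1f_i^{1/2}\ten\xi_j$ rather than $f_i^{1/2}\ten\eta_j$: after the substitution $h\mapsto h^{-1}$ the $G_1$-pairing becomes $\int (v_{u^{-1}}f_i^{1/2})(h)\,\overline{f_i^{1/2}}(h)\,dh$, which is precisely what (iii) controls. Your phrase ``the residual translation by $\beta_t(g)^{-1}$ on the $f_i^{1/2}$-factor is absorbed by the $v$-invariance'' glosses over exactly this point.

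Relatedly, your account of where hypothesis (ii) enters is misplaced. The $\Delta_1$-contribution to $\Psi(u,g)=\Delta(\alpha_g(u))\Delta_1(\beta_u(g)g^{-1})\Delta_2(\alpha_g(u))$ is $\Delta_1(\beta_u(e))=\Delta_1(e)=1$ once one localizes at $g=e$, so no hypothesis is needed to kill it there. What (ii) actually does is reconcile the two modular factors produced by the inversion twist: one copy of $U_1$ contributes $\Delta_1(\beta_u(h^{-1}))^{1/2}$ (because its argument is $\beta_u(h^{-1})^{-1}$) and the other contributes $\Delta_1(h^{-1})^{1/2}$, and (ii) identifies these so that their product is exactly the Jacobian $\Delta_1(h^{-1})$ of the substitution $h\mapsto h^{-1}$. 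A smaller imprecision: the residual conjugation on the $G_2$-leg is by $\alpha_g(s)$ with $(g,s)$ ranging over $\mathrm{supp}(\eta)$, not by $s$; this is harmless because the convergence furnished by inner amenability of $G_2$ is uniform on compacta, and it is where the surviving factor $\Delta_2(\alpha_g(s))^{1/2}$ is absorbed into the pairing $\la\rho(\alpha_g(s))\xi_j,\lm(\alpha_g(s))^*\xi_j\ra$.
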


\begin{proof} Using the co-cycle properties of $\alpha$ and $\beta$ \cite[Lemma 4.9]{VV} together with the definitions of $W$ and $\h{J}$, one sees that
\begin{align*}&(\h{J}\ten\h{J})W^*(\h{J}\ten\h{J})\xi(g,s,h,t)\\
&=\Delta(\alpha_{h^{-1}\beta_s(g)}(s^{-1}))^{1/2}\xi(\beta_{\alpha_{\beta_s(g)}(s^{-1})}(h^{-1})g,s,\beta_{\alpha_{\beta_s(g)}(s^{-1})}(h^{-1})^{-1},t\alpha_{h^{-1}\beta_s(g)}(s^{-1})^{-1})\end{align*}
for all $\xi\in L^2(G_1\times G_2\times G_1\times G_2)$. 
Let $\eta_i:=\sqrt{f_i}$. By hypothesis $(iii)$ we have $\norm{v_s\eta_i - \eta_i}_{L^2(G_1)}^2\leq\norm{v_s^1f_i-f_i}_{L^1(G_1)}\rightarrow0$ 
uniformly on compacta. Combining this with the support condition in $(iii)$, for any uniformly continuous function $f:G_1\times G_2\times G_1\rightarrow\C$ it follows that
\begin{equation}\label{int} \int f(g,s,h) \ (v_{\alpha_{\beta_s(g)}(s^{-1})^{-1}}\eta_i)(h) \overline{\eta_i}(h) \ dh \rightarrow f(g,s,e)\end{equation}
uniformly for $(g,s)$ in compact subsets $K\subseteq G_1\times G_2$.

By $(i)$ there exits a net $(\xi_j)$ in $C_c(G_2)_{\norm{\cdot}_2=1}$ satisfying
\begin{equation}\label{e:conv}\norm{\rho(s)\xi_j-\lm(s)^*\xi_j}_{L^2(G_2)}\rightarrow0\end{equation}
uniformly on compacta. Let $U_1$ be the self-adjoint unitary on $L^2(G_1)$ satisfying $U_1\xi(g)=\xi(g^{-1})\Delta_1(g^{-1})$. Then for any $\eta\in C_c(G_1\times G_2)$, and any $j$, we have
\begin{align*}&\la W\sigma V\sigma(\eta\ten U_1\eta_i\ten\xi_j),\eta\ten U_1\eta_i\ten\xi_j\ra=\la(\h{J}\ten\h{J})W^*(\h{J}\ten\h{J})(\eta\ten U_1\eta_i\ten\xi_j),W^*(\eta\ten U_1\eta_i\ten\xi_j)\ra\\
&=\iiiint\Delta(\alpha_{h^{-1}\beta_s(g)}(s^{-1}))^{1/2} \ \eta(\beta_{\alpha_{\beta_s(g)}(s^{-1})}(h^{-1})g,s) \ U_1\eta_i(\beta_{\alpha_{\beta_s(g)}(s^{-1})}(h^{-1})^{-1}) \ \xi_j(t(\alpha_{h^{-1}\beta_s(g)}(s^{-1}))^{-1})\\
&\times \overline{\eta}(\beta_t(h)^{-1}g,s) \ \overline{U_1\eta_i}(h) \ \overline{\xi_j}(\alpha_{\beta_t(h)^{-1}g}(s)t) \ dg \ ds \ dh \ dt\\
&=\iiiint\Delta(\alpha_{h^{-1}\beta_s(g)}(s^{-1}))^{1/2} \ \eta(\beta_{\alpha_{\beta_s(g)}(s^{-1})}(h^{-1})g,s) \ \eta_i(\beta_{\alpha_{\beta_s(g)}(s^{-1})}(h^{-1})) \ \xi_j(t(\alpha_{h^{-1}\beta_s(g)}(s^{-1}))^{-1})\\
&\times \overline{\eta}(\beta_t(h)^{-1}g,s) \ \overline{\eta_i}(h^{-1}) \ \overline{\xi_j}(\alpha_{\beta_t(h)^{-1}g}(s)t) \ \Delta_1(h^{-1}) \ dg \ ds \ dh \ dt\\
&=\iiiint\Delta(\alpha_{h^{-1}\beta_s(g)}(s^{-1}))^{1/2} \ \eta(\beta_{\alpha_{\beta_s(g)}(s^{-1})}(h^{-1})g,s) \ (v_{\alpha_{\beta_s(g)}(s^{-1})^{-1}}\eta_i)(h^{-1}) \ \xi_j(t(\alpha_{h^{-1}\beta_s(g)}(s^{-1}))^{-1})\\
&\times \Psi(\alpha_{\beta_s(g)}(s^{-1}),h^{-1})^{-1/2} \ \overline{\eta}(\beta_t(h)^{-1}g,s) \ \overline{\eta_i}(h^{-1}) \ \overline{\xi_j}(\alpha_{\beta_t(h)^{-1}g}(s)t) \ \Delta_1(h^{-1}) \ dg \ ds \ dh \ dt\\
&=\iiiint\Delta(\alpha_{h\beta_s(g)}(s^{-1}))^{1/2} \ \eta(\beta_{\alpha_{\beta_s(g)}(s^{-1})}(h)g,s) \ (v_{\alpha_{\beta_s(g)}(s^{-1})^{-1}}\eta_i)(h) \ \xi_j(t(\alpha_{h\beta_s(g)}(s^{-1}))^{-1})\\
&\times \Psi(\alpha_{\beta_s(g)}(s^{-1}),h)^{-1/2} \ \overline{\eta}(\beta_t(h^{-1})^{-1}g,s) \ \overline{\eta_i}(h) \ \overline{\xi_j}(\alpha_{\beta_t(h^{-1})^{-1}g}(s)t) \ dg \ ds \ dh \ dt\\
&=\iiint \bigg(\int \xi_j(t(\alpha_{h\beta_s(g)}(s^{-1}))^{-1}) \ \overline{\xi_j}(\alpha_{\beta_t(h^{-1})^{-1}g}(s)t) \ \overline{\eta}(\beta_t(h^{-1})^{-1}g,s) \ dt\bigg)\\
&\times \Delta(\alpha_{h\beta_s(g)}(s^{-1}))^{1/2} \ \eta(\beta_{\alpha_{\beta_s(g)}(s^{-1})}(h)g,s) \ \Psi(\alpha_{\beta_s(g)}(s^{-1}),h)^{-1/2} \   (v_{\alpha_{\beta_s(g)}(s^{-1})^{-1}}\eta_i)(h) \overline{\eta_i}(h) \ dg \ ds \ dh\\
&\rightarrow\iiint \xi_j(t(\alpha_{\beta_s(g)}(s^{-1}))^{-1})\  \overline{\xi_j}(\alpha_{g}(s)t) \ |\eta(g,s)|^2 \ 
\Delta(\alpha_{\beta_s(g)}(s^{-1}))^{1/2} \ \Psi(\alpha_{\beta_s(g)}(s^{-1}),e)^{-1/2} \ dg \ ds \ dt\\
\end{align*}
by (\ref{int}). But $\alpha_{\beta_s(g)}(s^{-1})^{-1}=\alpha_g(s)$ almost everywhere in $(g,s)$, so that
\begin{align*}\Delta(\alpha_{\beta_s(g)}(s^{-1}))^{1/2}\Psi(\alpha_{\beta_s(g)}(s^{-1}),e)^{-1/2}&=\Delta(\alpha_{\beta_s(g)}(s^{-1}))^{1/2} \Delta(\alpha_{\beta_s(g)}(s^{-1}))^{-1/2}\Delta_2(\alpha_{\beta_s(g)}(s^{-1}))^{-1/2}\\
&=\Delta_2(\alpha_g(s))^{1/2}, \ \ \ a.e.
\end{align*}
The final integral in the above calculation therefore reduces to
$$\iiint \Delta_2(\alpha_g(s))^{1/2} \ \xi_j(t\alpha_{g}(s))\  \overline{\xi_j}(\alpha_{g}(s)t) \ |\eta(g,s)|^2  \ dg \ ds \ dt.$$
By the compact convergence (\ref{e:conv}) the above expression converges in $j$ to
$$\iint |\eta(g,s)|^2 \ dg \ ds = \norm{\eta}_{L^2(G_1\times G_2)}^2.$$
Denoting the index sets of $(\eta_i)$ and $(\xi_j)$ by $I$ and $J$, respectively, we form the product $\mc{I}:=J\times I^{J}$ and for $I=(j,(i_j)_{j\in J})\in\mc{I}$ we let $\xi_I:=U_1\eta_{i_j}\ten\xi_j\in L^2(G_1\times G_2)$. By \cite[pg. 69]{Kelley} and the above analysis, the resulting net $(\xi_I)$ is asymptotically conjugation invariant for $VN(G_1)^\beta\bowtie_\alpha L^\infty(G_2)$.
\end{proof}

\begin{cor}\label{c:discrete} Let $(G_1,G_2)$ be matched pair of discrete groups. Then $VN(G_1)^\beta\bowtie_\alpha L^\infty(G_2)$ is strongly inner amenable.\end{cor}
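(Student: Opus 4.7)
The plan is to verify the three hypotheses (i)--(iii) of \propref{p:bicrossed} directly in the discrete setting, after which the conclusion is immediate from that proposition.

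Hypothesis (i) is handled by the Remark following Definition \ref{d:WIA}: every discrete group is inner amenable, so in particular $G_2$ is. Hypothesis (ii) is also automatic, because when $G_1$ is discrete its Haar measure is counting measure, so the modular function $\Delta_1$ is identically $1$ and is therefore preserved by every bijection of $G_1$, in particular by each $\beta_s$.

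The only step requiring any thought is (iii). Here the plan is to take the stationary net $f_i\equiv \delta_e\in\ell^1(G_1)=L^1(G_1)$, which is a non-negative norm-one element of $C_c(G_1)$ with support exactly $\{e\}$, so the support and normalization requirements are trivially met. To confirm the asymptotic invariance, I would use the matched pair cocycle relations from \cite[Lemma 4.9]{VV}, which force $\beta_s(e_{G_1})=e_{G_1}$ for every $s\in G_2$ (this follows from the factorization $s\cdot e_{G_1}=\beta_s(e)\alpha_e(s)$ in $G$ together with the normalization $\alpha_e=\id$). Since $\Delta_1\equiv 1$ kills the Radon--Nikodym factor in the definition of $v_s^1$, one obtains $v_s^1\delta_e=\delta_{\beta_s(e)}=\delta_e$ for every $s\in G_2$, so $\norm{v_s^1 f_i-f_i}_1=0$ identically --- far stronger than the required asymptotic invariance on compacta.

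With (i)--(iii) verified, \propref{p:bicrossed} directly yields strong inner amenability of $VN(G_1)^\beta\bowtie_\alpha L^\infty(G_2)$. No genuine obstacle is anticipated: the corollary amounts to the observation that discreteness trivializes each of the three conditions of the proposition, the only mildly structural point being the identity-fixing property of the $\beta$-action arising from the matched pair axioms.
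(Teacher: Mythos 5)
Your proposal is correct and is essentially the intended argument: the paper states the corollary without proof precisely because conditions (i)--(iii) of Proposition~\ref{p:bicrossed} trivialize in the discrete case, exactly as you verify (every discrete group is inner amenable, $\Delta_1\equiv 1$ for counting measure, and the stationary net $f_i\equiv\delta_e$ works since the matched pair axioms force $\beta_s(e)=e$).
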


\begin{remark} Unlike amenability, inner amenability for locally compact groups does not pass to extensions. For example, $\R^2\rtimes \F_6$ is not inner amenable (see Remark \ref{r:6}), while both $\R^2$ and $\F_6$ are. \end{remark}

\begin{example} We consider a discretized version of \cite[Example 1]{DQV}. Let
$$G=\bigg\{\begin{pmatrix} a & b & x\\ c & d & y\\ 0 & 0 & 1\end{pmatrix}\mid\begin{pmatrix}a & b\\ c & d\end{pmatrix}\in SL(2,\Q), \ x,y\in\Q\bigg\},$$
$G_1=(\Q^2,+)$, and $G_2=SL(2,\Q)$, viewed as discrete groups. The embeddings
$$G_1\ni (x,y)\mapsto \begin{pmatrix} 1 & 0 & -x\\ -x & 1 & -y+\frac{1}{2}x^2\\ 0 & 0 & 1\end{pmatrix}\in G, \ \ G_2\ni \begin{pmatrix}a & b\\ c & d\end{pmatrix}\mapsto  \begin{pmatrix}d & -b & 0\\ -c & a & 0\\ 0 & 0 & 1\end{pmatrix}\in G$$
determine a matched pair structure for $(G_1,G_2)$. The corresponding actions are given by
\begin{align*}\alpha_{(x,y)}\begin{pmatrix}a & b\\ c & d\end{pmatrix}&=\begin{pmatrix}a+bx & b\\ c+dx-(a+bx)(ax+b(y+\frac{1}{2}x^2)) & d-b(ax+b(y+\frac{1}{2}x^2))\end{pmatrix} \\
\beta_{\begin{pmatrix}a & b\\ c & d\end{pmatrix}}(x,y)&=\bigg(ax+by+\frac{b}{2}x^2,cx+d\bigg(y+\frac{1}{2}x^2\bigg)-\frac{1}{2}\bigg(ax+b\bigg(y+\frac{1}{2}x^2\bigg)\bigg)^2\bigg).\end{align*}
By Corollary \ref{c:discrete}, the bicrossed product $\G:=VN(\Q^2)^\beta\bowtie_\alpha L^\infty(SL(2,\Q))$ is strongly inner amenable. Since $SL(2,\Q)$ is not amenable, it follows from \cite[Theorem 13]{DQV} that $\G$ is not amenable. Moreover, since $\Q^2$ is not compact, $\G$ is not discrete by \cite[Proposition 2.17]{VV}. Thus, $\G$ is an example of a non-discrete, non-amenable, inner amenable quantum group.

\end{example}

\section{Relative Injectivity and the Averaging Technique}

In \cite[Lemma 4.1]{RX} Ruan and Xu showed that the dual $\LIQH$ of a strongly inner amenable Kac algebra $\G$ is relatively $1$-injective in $\LOQH\hskip2pt\mathbf{mod}$. We will now show that relative $1$-injectivity follows from the a priori weaker notion of inner amenability. Below we let $\widetilde{\G}:=\mathrm{Gr}(\h{\G})$ denote the intrinsic group of $\h{\G}$. 

\begin{prop}\label{p:IA=>relinj} Let $\G$ be a locally compact quantum group. Consider the following conditions:
\begin{enumerate}[label=(\roman*)]
\item $\h{\G}$ is inner amenable;
\item $\LIQ$ is relatively $1$-injective in $\mathbf{mod}\hskip2pt\LOQ$;
\item $\LIQ$ is relatively $1$-injective in $\LOQ\hskip2pt\mathbf{mod}$;
\item $\widetilde{\h{\G}}=\mathrm{Gr}(\G)$ is inner amenable.
\end{enumerate}
Then $(i)\Rightarrow(ii)\Leftrightarrow(iii)\Rightarrow(iv)$. When $\G$ is co-commutative, the conditions are equivalent.
\end{prop}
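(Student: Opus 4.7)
The plan is to establish the chain $(i)\Rightarrow(ii)\Leftrightarrow(iii)\Rightarrow(iv)$, with $(iv)\Rightarrow(i)$ in the co-commutative setting following from \propref{p:equal}.

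For $(i)\Rightarrow(ii)$: Let $m\in\LIQ^*$ be the inner invariant state witnessing inner amenability of $\h{\G}$, so $\la m,x\lhd_{\h{\G}}\hat f\ra=\hat f(1)\la m,x\ra$ for $\hat f\in\LOQH$, $x\in\LIQ$. Writing out the dual $\lhd_{\h{\G}}$-action through $\h{W}=\sigma W^*\sigma$ gives the equivalent reformulation $(m\ten\id)(W(x\ten 1)W^*)=m(x)\cdot 1_{\h{\G}}$. I would construct the splitting $\Phi:\mc{CB}(\LOQ,\LIQ)\to\LIQ$ of $\Delta$ by first associating to $\Psi\in\mc{CB}(\LOQ,\LIQ)$ an extended map into $\BLTQ$, and then averaging with $m$ through an expression involving the coproduct $\Gam$ and $W$ designed so that the $\lhd_{\h{\G}}$-invariance of $m$ forces the reconstruction $\Phi(\Delta(x))=x$. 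The naive candidate $\Phi(\Psi)(f)=\la m,\Psi(f)\ra$ fails, so the formula must weave in the fundamental unitary and a slice so that the displayed invariance identity applies. Complete positivity of $m$ will yield $\|\Phi\|_{cb}\le 1$, and right $\LOQ$-linearity will follow from coassociativity of $\Gam$.

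For $(ii)\Leftrightarrow(iii)$: The unitary antipode $R:\LIQ\to\LIQ$ is a completely isometric $*$-anti-automorphism satisfying $\Sigma\circ(R\ten R)\circ\Gam=\Gam\circ R$ as a consequence of \eqref{e:R}. Unpacking this gives $R(g\star x)=R(x)\star R_*(g)$, where $R_*$ is the induced anti-isomorphism of $\LOQ$. Thus $R$ together with $R_*$ yields a categorical equivalence $\LOQ\hskip2pt\mathbf{mod}\simeq\mathbf{mod}\hskip2pt\LOQ$ carrying $\LIQ$ (with its left action) to $\LIQ$ (with its right action), transporting relatively $1$-injective splittings of $\Delta$ in one category to such splittings in the other.

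For $(iii)\Rightarrow(iv)$: The intrinsic group $\mathrm{Gr}(\G)=\widetilde{\h\G}$ is a locally compact group sitting inside $\LIQ$ as the group-like unitaries $u$ with $\Gam(u)=u\ten u$, and is in particular a closed quantum subgroup of $\G$ in the sense of Vaes. I would use \propref{p:qs}-style restriction arguments to pull back the relative $1$-injective splitting from $\LIQ$ over $\LOQ$ to one for $\ell^{\infty}(\mathrm{Gr}(\G))$ over $\ell^{1}(\mathrm{Gr}(\G))$. This latter property, when specialized to a genuine locally compact group, is precisely the classical Lau--Paterson characterization of inner amenability (\cite{LP1}), yielding (iv). Finally, for co-commutative $\G=\G_s$ we have $\h\G=\G_a$ and $\mathrm{Gr}(\G)=G$, so by \propref{p:equal} conditions (i) and (iv) both reduce to inner amenability of $G$, closing the chain and giving equivalence of all four conditions.

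The main obstacle will be step $(i)\Rightarrow(ii)$: identifying the precise averaging formula for $\Phi$ so that the $\lhd_{\h{\G}}$-invariance identity of $m$ --- an identity involving $\h{W}$ and hence $W$ on the opposite side --- matches the right $\LOQ$-module structure on $\LIQ$ given by $\Gam$, and delivers the splitting condition $\Phi\circ\Delta=\id$ with $cb$-norm $\le 1$.
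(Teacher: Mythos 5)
Your outline reproduces the paper's skeleton, but the two steps that carry all the weight are not actually executed, and one of them rests on a false reduction. For $(i)\Rightarrow(ii)$ you correctly guess that one must slice a fundamental unitary against the invariant state, but you explicitly stop short of producing the map --- and that construction, together with the verification of the module property, \emph{is} the proof. What the paper does: starting from a right $\wh{\lhd}$-invariant state $n\in\LIQ^*$, it first passes to $m:=n\circ R$, which is left $\wh{\rhd}'$-invariant (invariant for the commutant unitary $\h{V}'=\sigma V^*\sigma$); then, identifying $\mc{CB}(\LOQ,\LIQ)\cong\LIQ\oten\LIQ$ so that $\Delta=\Gam$, it defines $\Phi(A)=(\id\ten m)(V^*AV)$ on $\BLTQ\oten\LIQ$. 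The identity $\Phi\circ\Gam=\id$ is immediate from $\Gam(x)=V(x\ten1)V^*$, but the right $\LOQ$-module property requires a genuine computation with the pentagon relation $V_{12}V_{13}V_{23}=V_{23}V_{12}$ and the $\wh{\rhd}'$-invariance of $m$, and one still needs \cite[Corollary 4.3]{C} to see that $\Phi$ maps $\LIQ\oten\LIQ$ back into $\LIQ$. Note also that your reformulation of inner invariance through $W$ naturally pairs with the $\Gam^l$-module structure, whereas the right $\LOQ$-action on $\LIQ\oten\LIQ$ is implemented by $V$ via $\Gam^r$; this mismatch is precisely why the paper twists by $R$ first. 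Your step $(ii)\Leftrightarrow(iii)$ via the antipode is correct and is what the paper does (the converted splitting is $R\circ\Phi\circ\Sigma\circ(R\ten R)$).

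The step $(iii)\Rightarrow(iv)$ as you propose it fails. First, $\mathrm{Gr}(\G)=\widetilde{\h{\G}}$ is a closed quantum subgroup of $\h{\G}$, not of $\G$; the associated embedding is $\gamma:VN(\mathrm{Gr}(\G))=L^{\infty}(\widehat{\widetilde{\h{\G}}})\hookrightarrow\LIQ$, so the algebra to which one restricts is $VN(\mathrm{Gr}(\G))$, not $L^{\infty}(\mathrm{Gr}(\G))$. Second, and more seriously, relative $1$-injectivity of $L^{\infty}(H)$ in $\mathbf{mod}\hskip2pt L^1(H)$ is \emph{not} the Lau--Paterson characterization of inner amenability of a locally compact group $H$: that characterization (and \cite[Proposition 3.2]{CT}, which the paper actually invokes) is the existence of a conjugation-invariant state on $VN(H)$. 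Indeed, applying the very implication $(i)\Rightarrow(ii)$ to $\G=\G_a$ --- whose dual $\G_s$ is always strongly inner amenable because its conjugation co-representation is trivial --- shows that $L^{\infty}(H)$ is relatively $1$-injective in $\mathbf{mod}\hskip2pt L^1(H)$ for \emph{every} locally compact group $H$, so the property you reduce to is vacuous and cannot detect inner amenability. The paper's actual argument uses the $\Gam(\LIQ)$--$\LIQ$-bimodule property of the splitting $\Phi$ and the group-likeness $\Gam(u)=u\ten u$ of $u\in\mathrm{Gr}(\G)$ to produce a conjugation-invariant state on $\LIQ$ (as in \cite[Theorem 3.4]{CT}), and then composes with $\gamma$ to land in $VN(\mathrm{Gr}(\G))^*$. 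Your handling of the co-commutative case, via \propref{p:equal} and $\mathrm{Gr}(\G_s)=G$, is correct.
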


\begin{proof} $(i)\Rightarrow(ii)$: Given a state $n\in\LIQ^*$ which is right $\widehat{\lhd}$ invariant, it follows that $m:=n\circ R$ is left $\widehat{\rhd}'$ invariant.

It suffices to provide a completely contractive morphism which is a left inverse to the map $\Delta:\LIQ\rightarrow\mc{CB}(\LOQ,\LIQ)$ given by
\begin{equation}\label{delta}\Delta(x)(f)=x\rhd f, \ \ \ T\in\BLTQ, \ \rho\in\TCQ.\end{equation}
Identifying $\mc{CB}(\LOQ,\LIQ)\cong\LIQ\oten\LIQ$ via
\begin{equation*}\la\Phi,f\ten g\ra=\la\Phi(f),g\ra, \ \ \ \Phi\in\mc{CB}(\LOQ,\LIQ), \ f,g\in\LOQ,\end{equation*}
we have $\Delta=\Gam$, and that the corresponding $\LOQ$-module structure on $\LIQ\oten\LIQ$ is defined by $X\unrhd f=(f\ten\id\ten\id)(\Gam^r\ten\id)(X)$ for $X\in\LIQ\oten\LIQ$ and $f\in\LOQ$.

First, consider the map 
$$\Phi:\BLTQ\oten\LIQ\ni A\mapsto (\id\ten m)(V^*AV)\in\BLTQ.$$
Clearly, $\Phi$ is a completely contractive left inverse to $\Gam$. We show that $\Phi$ is a right $\mc{T}_\rhd$-module map. This will complete the proof since \cite[Corollary 4.3]{C} will entail the invariance $\Phi(\LIQ\oten\LIQ)\subseteq\LIQ$, and the restricted module action $\mc{T}_\rhd\curvearrowright\LIQ$ is the pertinent $\LOQ$-module action. To this end, fix $A\in\BLTQ\oten\LIQ$ and $\rho\in\TCQ$. Then
\begin{align*}\Phi(A\unrhd\rho)&=\Phi((\rho\ten\id\ten\id)(V_{12}A_{13}V_{12}^*))\\
&=(\id\ten m)(\rho\ten\id\ten\id)(V_{23}^*V_{12}A_{13}V_{12}^*V_{23})\\
                              &=(\id\ten m)(\rho\ten\id\ten\id)(V_{12}V_{23}^*V_{13}^*A_{13}V_{13}V_{23}V_{12}^*)\\
                              &=(\rho\ten\id)(V(\id\ten\id\ten  m)(V_{23}^*V_{13}^*A_{13}V_{13}V_{23})V^*).\end{align*}
Now, using the fact that $\h{V}'=\sigma V^*\sigma$, where $\sigma$ is the flip map on $\LTQ\ten\LTQ$, for any $\tau,\om\in\TCQ$, we have
\begin{align*}&\la(\id\ten\id\ten  m)(V_{23}^*V_{13}^*A_{13}V_{13}V_{23}),\tau\ten\om\ra\\
&=\la(\id\ten\id\ten  m)(V_{23}^*(\sigma\ten 1)V_{23}^*A_{23}V_{23}(\sigma\ten1)V_{23}),\tau\ten\om\ra\\
&=\la(\id\ten\id\ten  m)(V_{13}^*V_{23}^*A_{23}V_{23}V_{13}),\om\ten\tau\ra\\
&=\la(\id\ten  m)(V^*(1\ten(\tau\ten\id)(V^*AV))V),\om\ra\\
&=\la( m\ten\id)(\h{V}'((\tau\ten\id)(V^*AV)\ten1)\h{V}'^*),\om\ra\\
&=\la  m,\om\wh{\rhd}'((\tau\ten\id)(V^*AV))\ra\\
&=\la  m,(\tau\ten\id)(V^*AV)\ra\la\om,1\ra\\
&=\la(\id\ten  m\ten\id)(V^*AV\ten 1),\tau\ten\om\ra\\
&=\la\Phi(A)\ten 1,\tau\ten\om\ra.\end{align*}

As $\tau$ and $\om$ were arbitrary, we have
\begin{align*}\Phi(A\unrhd\rho)&=(\rho\ten\id)(V(\id\ten\id\ten m)(V_{23}^*V_{13}^*A_{13}V_{13}V_{23})V^*)\\
                             &=(\rho\ten\id)(V(\Phi(A)\ten1)V^*)\\
                             &=\Phi(A)\rhd\rho.\end{align*}
                             
$(ii)\Leftrightarrow (iii)$ Given a completely contractive left (respectively, right) $\LOQ$-module left inverse $\Phi$ to $\Gam$, it follows that $R\circ\Phi\circ\Sigma\circ(R\ten R)$ is a completely contractive right (respectively, left) $\LOQ$-module left inverse to $\Gam$. 
                          
$(iii)\Rightarrow(iv)$: Recall that $\mathrm{Gr}(\G)$ is a group of unitaries in $\LIQ$, so it acts naturally on $\LIQ$ by conjugation. The existence of a state $m\in\LIQ^*$ which is $\mathrm{Gr}(\G)$-invariant follows directly from the argument of \cite[Theorem 3.4]{CT}, using the $\Gam(\LIQ)-\LIQ$-bimodule property of $\Phi$. Since $\widetilde{\widehat{\G}}$ is a closed quantum subgroup of $\widehat{\G}$ in the sense of Vaes \cite[Theorem 5.5]{D2}, there exists a normal $*$-homomorphism $\gamma:L^{\infty}(\widehat{\widetilde{\h{\G}}})\rightarrow\LIQ$ intertwining the co-multiplications. As $L^{\infty}(\widehat{\widetilde{\h{\G}}})=VN(\widetilde{\widehat{\G}})=VN(\mathrm{Gr}(\G))$, the state $m\circ\gamma\in VN(\mathrm{Gr}(\G))^*$ is $\mathrm{Gr}(\G)$-invariant, making $\mathrm{Gr}(\G)$ inner amenable by \cite[Proposition 3.2]{CT}.

When $\G=\G_s$ is co-commutative, then $\mathrm{Gr}(\G_s)=G$ and the implication $(iv)\Rightarrow(i)$ follows immediately from \cite[Proposition 3.2]{CT}.
\end{proof}

\begin{cor}\label{c:equiv} Let $\G$ be a locally compact quantum group for which $\LIQH$ is an injective von Neumann algebra. Then the following are equivalent:
\begin{enumerate}[label=(\roman*)]
\item $\G$ is amenable;
\item $\G$ is inner amenable;
\item $\LIQH$ is relatively 1-injective in $\mathbf{mod}\hskip2pt\LOQH$.
\end{enumerate}
\end{cor}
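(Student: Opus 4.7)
The plan is to prove the cycle $(i) \Rightarrow (ii) \Rightarrow (iii) \Rightarrow (i)$. The implication $(i) \Rightarrow (ii)$ is immediate from Proposition \ref{p:amen=>IA}. For $(ii) \Rightarrow (iii)$, I apply Proposition \ref{p:IA=>relinj} with $\h{\G}$ in place of $\G$: condition (i) of that proposition then reads ``$\wh{\h{\G}} = \G$ is inner amenable,'' which is our hypothesis (ii), while condition (ii) becomes ``$\LIQH$ is relatively 1-injective in $\mathbf{mod}\hskip2pt\LOQH$,'' which is (iii).

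The substantive step is $(iii) \Rightarrow (i)$. A von Neumann algebra is injective if and only if it is 1-injective as an operator space, so the standing hypothesis gives $\LIQH$ 1-injective in $\mathbf{mod}\hskip2pt\C$. I then combine this with (iii) via a one-sided analogue of Proposition \ref{p:rel+inj}: relative 1-injectivity exhibits $\LIQH$ as a completely contractive $\LOQH$-module retract of $\mc{CB}(\LOQH^+, \LIQH)$, while the latter is 1-injective in $\mathbf{mod}\hskip2pt\LOQH$ precisely because $\LIQH$ is 1-injective in $\mathbf{mod}\hskip2pt\C$. As a module retract of a 1-injective object, $\LIQH$ is itself 1-injective in $\mathbf{mod}\hskip2pt\LOQH$. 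Amenability of $\G$ then follows from \cite[Corollary 4.10]{CN}, which is the 1-injectivity characterization of amenability used in establishing Corollary \ref{c:LP}.

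I anticipate the only point requiring any care is the one-sided analogue of Proposition \ref{p:rel+inj}; this is a routine adaptation of the bimodule proof, and the same idea is already implicitly invoked in deriving Corollary \ref{c:LP}, so no new technical input should be needed.
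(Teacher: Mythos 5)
Your proposal is correct and follows essentially the same route as the paper: $(i)\Rightarrow(ii)$ via \propref{p:amen=>IA}, $(ii)\Rightarrow(iii)$ via \propref{p:IA=>relinj} applied to $\h{\G}$, and $(iii)\Rightarrow(i)$ by upgrading relative $1$-injectivity to genuine $1$-injectivity in $\mathbf{mod}\hskip2pt\LOQH$ using the operator-space injectivity of the von Neumann algebra (the one-sided analogue of \propref{p:rel+inj}, which is precisely the cited \cite[Proposition 2.3]{C}) and then invoking the injectivity characterization of amenability. The only cosmetic difference is the reference for the last step (the paper cites \cite[Theorem 5.1]{C} rather than \cite[Corollary 4.10]{CN}), which does not affect the argument.
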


\begin{proof} Propositions \ref{p:amen=>IA} and \ref{p:IA=>relinj} yield the implications $(i)\Rightarrow(ii)\Rightarrow(iii)$. Assume $(iii)$. Since $\LIQH$ is 1-injective in $\mathbf{mod}\hskip2pt\C$ it follows from \cite[Proposition 2.3]{C} that $\LIQH$ is 1-injective in $\mathbf{mod}\hskip2pt\LOQH$. Hence, $\G$ is amenable by \cite[Theorem 5.1]{C}.
\end{proof}


In the recent article \cite{NV}, Ng and Viselter utilized topological inner amenability to elucidate the connection between co-amenability of $\G$ and amenability of $\h{\G}$. One of their main results is the following.

\begin{thm}[Ng--Viselter]\label{t:Ng-Viselter} Let $\G$ be a locally compact quantum group.
\begin{enumerate}[label=(\roman*)]
\item Consider the following conditions:
\begin{enumerate}
\item $\G$ is co-amenable;
\item $C_0(\G)$ is nuclear and there exists a state $\rho\in C_0(\G)^*$ such that
$$\la \rho,x\widehat{\lhd} \hat{f}\ra=\la\rho,x\ra\la \hat{f},1\ra, \ \ \ \hat{f}\in\LOQH, \ x\in C_0(\G).$$
\item $\h{\G}$ is amenable.
\end{enumerate}
Then $(a)\Rightarrow(b)\Rightarrow(c)$.
\item Moreover, if $\h{\G}$ has trivial scaling group (for instance, if $\G$ is a Kac algebra), then $(a)\Rightarrow(b')\Rightarrow(b)$, where
\begin{enumerate}
\item [(b$'$)] $C_0(\G)$ is nuclear and has a tracial state.
\end{enumerate}
\end{enumerate}
\end{thm}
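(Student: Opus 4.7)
The proof splits naturally along the two implication chains. For $(a)\Rightarrow(b)$: co-amenability of $\G$ is equivalent to $\pi_{\G}:C_u(\G)\rightarrow C_0(\G)$ being a $*$-isomorphism, so the universal counit descends to a character $\epsilon$ on $C_0(\G)$, and nuclearity of $C_0(\G)$ is a standard consequence of co-amenability (cf. \cite{BT}). I would then verify directly that $\epsilon$ is topologically inner invariant: expanding $x\widehat{\lhd}\hat{f}$ through the fundamental unitary $\widehat{W}=\sigma W^*\sigma\in\LIQH\oten\LIQ$ and applying the counit identity $(\epsilon\ten\id)(W)=1$ should reduce $\la\epsilon,x\widehat{\lhd}\hat{f}\ra$ to $\la\hat{f},1\ra\la\epsilon,x\ra$.

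For $(b)\Rightarrow(c)$, which is the technical heart: nuclearity supplies a net of u.c.p.\ factorizations $\phi_i:C_0(\G)\to M_{n_i}$, $\psi_i:M_{n_i}\to C_0(\G)$ with $\psi_i\circ\phi_i\to\id$ point-norm, and composing the normalized trace on $M_{n_i}$ with $\phi_i$ yields a net of ``approximately tracial'' states $\tau_i$ on $C_0(\G)$. The plan is to convolve $\tau_i$ with the topologically inner invariant state $\rho$ through $\Gam$ to build a net of states on $\LIQ$ which is asymptotically left-invariant under the action of $\LOQH$, so that any weak* cluster point furnishes the desired invariant mean on $\LIQH$. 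The inner invariance of $\rho$ is the ingredient making this trace-averaging respect the $\LOQH$-module structure.

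For part (ii), $(a)\Rightarrow(b')$: under trivial scaling group of $\h{\G}$, the counit $\epsilon$ of $C_0(\G)$ produced above becomes tracial, via the compatibility of the antipode $S$ of $\G$ with the Haar weights encoded by the dual scaling group, combined with the identity $\epsilon\circ S=\epsilon$. For $(b')\Rightarrow(b)$: given a tracial state $\tau$ on $C_0(\G)$, one expands $\la\tau,x\widehat{\lhd}\hat{f}\ra$ through $W$, uses traciality to cyclically rearrange the resulting expression, and collapses via $W^*W=1$ to $\la\hat{f},1\ra\la\tau,x\ra$.

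The main obstacle is $(b)\Rightarrow(c)$. Topological inner invariance is substantially weaker than full inner amenability, so Corollary \ref{c:equiv} is not directly applicable; instead one must leverage nuclearity of $C_0(\G)$ to promote a localized invariance on $C_0(\G)$ into a global invariant mean on $\LIQH$. The delicate point is coordinating the point-norm convergence $\psi_i\circ\phi_i\to\id$ with the inner-invariance relation satisfied by $\rho$ so that $\LOQH$-module invariance actually survives the weak* limit.
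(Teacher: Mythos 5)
First, a point of comparison: the paper does not prove this statement at all --- it is quoted from Ng--Viselter \cite{NV} as background, and only the new converse $(b)\Rightarrow(a)$ is proved here (Theorem \ref{t:co-amen}). So your proposal can only be judged against \cite{NV} and on its own terms. The easy implication $(a)\Rightarrow(b)$ is essentially right: the bounded counit $\epsilon$ is a $*$-character, so $\epsilon\ten\id$ is multiplicative and $(\epsilon\ten\id)(W(x\ten 1)W^*)=(\epsilon\ten\id)(W)\,\epsilon(x)\,(\epsilon\ten\id)(W^*)=\epsilon(x)1$, while nuclearity of $C_0(\G)$ follows from \cite{BT}. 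Note, though, that a character is automatically tracial, so $(a)\Rightarrow(b')$ requires no hypothesis on the scaling group; that hypothesis is consumed entirely by $(b')\Rightarrow(b)$, which is not how your sketch allocates it.

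The two hard implications have genuine gaps. For $(b)\Rightarrow(c)$: composing the normalized trace with the c.p.c.\ maps $\phi_i:C_0(\G)\to M_{n_i}$ furnished by nuclearity does \emph{not} give approximately tracial states --- the $\phi_i$ are nowhere near multiplicative, so $\mathrm{tr}(\phi_i(xy))$ and $\mathrm{tr}(\phi_i(yx))$ are unrelated --- and traciality is not even the relevant invariance outside the trivial-scaling-group case. More seriously, your limiting object lives in the wrong place: an invariant mean for $\h{\G}$ is a state on $\LIQH$, whereas you construct states on $\LIQ$; since $C_0(\G)\cap\LIQH=\C 1$, no restriction procedure is available. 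The actual content of the implication is that the \emph{same} action $\widehat{\lhd}$ for which $\rho$ is invariant on $C_0(\G)$ restricts on $\LIQH$ to convolution, $\hat{x}\,\widehat{\lhd}\,\hat{f}=(\hat{f}\ten\id)\h{\Gam}(\hat{x})$; so one must extend $\rho$ to a $\widehat{\lhd}$-invariant state on all of $\BLTQ$ and then restrict to $\LIQH$, and nuclearity of $C_0(\G)$ is used precisely to make that equivariant extension possible. Your plan never confronts this step. For $(b')\Rightarrow(b)$: the proposed cyclic rearrangement inside the slice $(\tau\ten\hat{f})(W(x\ten 1)W^*)$ is not legitimate, since $\tau\ten\hat{f}$ is not a trace on the tensor product and the second legs of $W$ and $W^*$ cannot be commuted past $\hat{f}$; this is exactly where triviality of the scaling group of $\h{\G}$ must enter, via the modular relation $(\h{J}\ten J)W(\h{J}\ten J)=W^*$ and the behaviour of $\tau$ under the unitary antipode. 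As written, both implications would need to be reworked from scratch.
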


They conjectured that condition $(b)$ above is equivalent to either condition $(a)$ or condition $(c)$. We now show that  $(b)$ is indeed equivalent to $(a)$.

\begin{thm}\label{t:co-amen} Let $\G$ be a locally compact quantum group. Then $\G$ is co-amenable if and only if $C_0(\G)$ is nuclear and $\h{\G}$ is topologically inner amenable.\end{thm}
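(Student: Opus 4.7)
The forward implication is contained in Theorem \ref{t:Ng-Viselter}: co-amenability implies both nuclearity of $C_0(\G)$ and the existence of a topologically inner invariant state, namely the counit $\epsilon\in C_0(\G)^*$ afforded by the isomorphism $C_u(\G)\cong C_0(\G)$.

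For the reverse direction, assume $C_0(\G)$ is nuclear and let $m\in C_0(\G)^*$ be topologically inner invariant. The invariance of $m$ can be recast as
$$(\id\ten m)\h{\Gam}^l(x)=m(x)\cdot 1,\quad x\in C_0(\G),$$
where $\h{\Gam}^l$ is the left coaction of $\h{\G}$ on $C_0(\G)$. The aim is to produce a counit $\epsilon\in C_0(\G)^*$, equivalently a bounded approximate identity for $\LOQ$, which in turn yields co-amenability of $\G$.

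The strategy is an averaging technique in the spirit of Haagerup, with topological inner amenability playing the role that discreteness plays in the original argument. Nuclearity of $C_0(\G)$ provides a net of contractive completely positive factorizations $T_\alpha=\psi_\alpha\circ\phi_\alpha:C_0(\G)\to M_{n_\alpha}\to C_0(\G)$ with $T_\alpha\to\id$ pointwise in norm. For each $\alpha$, I would slice an appropriate composition of $T_\alpha$ with the coaction $\h{\Gam}^l$ against $m$ on the relevant leg to extract a functional $e_\alpha\in C_u(\G)^*$ which, via the inclusion $\LOQ\hookrightarrow C_u(\G)^*$ as a two-sided ideal, lands in $\LOQ$. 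The topological inner invariance of $m$ causes the $\h{\G}$-coaction to collapse after slicing, so $e_\alpha$ behaves like a counit on the range of $T_\alpha$; the pointwise convergence $T_\alpha\to\id$ then propagates this property to a counit in the weak-$*$ limit.

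The main obstacle is verifying (i) uniform boundedness of the net $(e_\alpha)$; (ii) its actual membership in $\LOQ$; and (iii) that any weak-$*$ cluster point $\epsilon\in C_0(\G)^*$ satisfies the counit identity $(\epsilon\ten\id)\Gam(x)=x$ for $x\in C_0(\G)$. The first two items follow from the contractivity of the completely positive slice maps together with the ideal structure of $\LOQ$ in $C_u(\G)^*$. The third item is the delicate point: it requires careful use of the adjoint relations between $W$ and $\h{W}$ in combination with the invariance of $m$, and is precisely where the argument diverges from Haagerup's discrete/unimodular setting, exploiting topological inner amenability as the correct substitute.
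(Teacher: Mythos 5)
Your high-level strategy---averaging the nuclearity approximants through a map built from the invariant state $m$ and recognizing the resulting functionals inside $C_u(\G)^*$---is indeed the paper's approach, but the sketch stops exactly where the proof actually lives, and your account of the ``obstacles'' misidentifies where the difficulty sits. The object one must extract from $m$ is not merely a functional to slice against: it is a completely positive \emph{right $\LOQ$-module} left inverse to the co-multiplication. Concretely, one passes to $n:=m\circ R$ (which is left $\widehat{\rhd}'$-invariant), extends it strictly to $M(C_0(\G))$, and sets $\Phi(A)=(\id\ten n)(V^*AV)$ for $A\in C_0(\G)\ten_{\min}C_0(\G)$; the pentagon relation and the extended invariance of $n$ give both the module property and $\widetilde{\Phi}\circ\Gam|_{C_0(\G)}=\id_{C_0(\G)}$, exactly as in Proposition \ref{p:IA=>relinj} but with strict-topology care. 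The module property is what guarantees that the averaged maps are multipliers at all; without verifying it the averaging produces nothing, and you do not address it.

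Your items (i)--(iii) then misplace the remaining work. Dualizing gives a completely positive left $\LOQ$-module map $\Phi^*:\LOQ\rightarrow M(\G)\hten M(\G)$ with $m_{M(\G)}\circ\Phi^*$ the inclusion $\LOQ\hookrightarrow M(\G)$, and the averaged maps $\psi_i=m_{M(\G)}\circ(\id\ten\vphi_i^*)\circ\Phi^*$ are of the form $m^r_{\nu_i}$ for positive contractive $\nu_i\in C_u(\G)^*$ by \cite[Theorem 5.2]{D3}. The genuine issue is forcing $\nu_i$ into $M(\G)$---not into $\LOQ$; your claim that the functionals ``land in $\LOQ$'' via the ideal property is not how the argument runs and is not needed. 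One writes $\vphi_i=\sum_k x^i_k\ten\mu^i_k$ and must know that each slice $(\id\ten x^i_k)\Phi^*$ is right multiplication by an element of $C_u(\G)^*$; since the $x^i_k$ need not be positive, the paper first invokes $(b)\Rightarrow(c)$ of Theorem \ref{t:Ng-Viselter} to conclude that $\h{\G}$ is amenable, whence $_{\LOQ}\mc{CB}(\LOQ)=C_u(\G)^*$ by \cite[Proposition 5.10]{C}, and only then does the ideal property give $\nu_i=\sum_k\nu^i_k\star\mu^i_k\in M(\G)$. A weak* cluster point of $(\nu_i)$ is a right identity for $M(\G)$, and co-amenability follows from \cite[Theorem 3.1]{BT}; no delicate manipulation of the adjoint relations between $W$ and $\h{W}$ is needed at that stage. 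The hidden input your sketch omits is thus the amenability of $\h{\G}$, which must be secured \emph{before} the averaging in order to recognize the sliced maps as multipliers.
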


\begin{proof} Suppose that $C_0(\G)$ is nuclear and $\h{\G}$ is topologically inner amenable.  Given a state $m\in C_0(\G)^*$ which is right $\widehat{\lhd}$ invariant, it follows that $n:=m\circ R$ is left $\widehat{\rhd}'$ invariant, as in the proof of Proposition \ref{p:IA=>relinj}. Let $n\in M(C_0(\G))^*$ also denote the unique extension to $M(C_0(\G))$ which is strictly continuous on the unit ball, and $(\id\ten n):M(\KLTQ\ten_{\min} C_0(\G))\rightarrow \BLTQ$ denote the unique extension of the slice map $(\id\ten n):\KLTQ\ten_{\min} C_0(\G)\rightarrow \KLTQ$ which is strictly continuous on the unit ball. By strict density of $C_0(\G)$ in $M(C_0(\G))$, it follows that
$$\la n,\hat{f}'\widehat{\rhd}'x\ra=\la n,x\ra\la \hat{f}',1\ra, \ \ \ \hat{f}\in\LOQHP, \ x\in M(C_0(\G)).$$
Since $V\in M(\KLTQ \ten_{\min} C_0(\G))$, the map $\Phi:C_0(\G)\ten_{\min} C_0(\G)\rightarrow\BLTQ$ defined by
$$\Phi(A)=(\id\ten n)(V^*AV), \ \ \ A\in C_0(\G)\ten_{\min} C_0(\G),$$
is a non-zero, strict, completely positive contraction. Using the extended $\widehat{\rhd}'$-invariance on $M(C_0(\G))$, it follows verbatim from the proof of Proposition \ref{p:IA=>relinj} that
$$\Phi(A\unrhd\rho)=\Phi(A)\rhd\rho, \ \ \ A\in C_0(\G)\ten_{\min} C_0(\G), \ \rho\in\TCQ.$$
Since $\mathrm{Ad}(V^*):\KLTQ \ten_{\min} C_0(\G)\rightarrow\KLTQ \ten_{\min} C_0(\G)$ and
$$\TCQ\hten M(\G)=(\KLTQ \ten_{\min} C_0(\G))^*,$$
we have 
$$\mathrm{Ad}(V^*)^*:\TCQ\hten M(\G)\rightarrow\TCQ\hten M(\G).$$ 
Letting $r:\TCQ\ni\rho\mapsto\rho|_{C_0(\G)}\in M(\G)$ be the (completely positive) restriction map, it follows that
$$\Phi^*|_{\TCQ}:\TCQ\ni\rho\mapsto(r\ten \id)(\mathrm{Ad}(V^*)^*(\rho\ten n))\in M(\G)\hten M(\G).$$
The proof of \cite[Corollary 4.3]{C} entails the inclusion $\Phi(C_0(\G)\ten_{\min} C_0(\G))\subseteq\LIQ$. In fact, since $\Phi$ is a right $\LOQ$-module map and $C_0(\G)$ is essential, more is true:
$$\Phi(C_0(\G)\ten_{\min} C_0(\G))\subseteq\LUC\subseteq M(C_0(\G)).$$
The unique strict extension $\widetilde{\Phi}:M(C_0(\G)\ten_{\min} C_0(\G))\rightarrow M(C_0(\G))$, which exists by \cite[Corollary 5.7]{Lance}, satisfies $\widetilde{\Phi}\circ\Gamma|_{C_0(\G)}=\id_{C_0(\G)}$. 
If $\rho\in\LIQ_{\perp}$ then the invariance $\Phi(C_0(\G)\ten_{\min} C_0(\G))\subseteq\LIQ$ implies $\Phi^*(\rho)=0$. Thus, $\Phi^*$ induces a completely positive left $\LOQ$-module map 
$$\Phi^*:\LOQ=(\TCQ/\LIQ_{\perp})\rightarrow M(\G)\hten M(\G).$$
By strict continuity and the definition of the multiplication on $M(\G)$, for $f\in\LOQ$ and $x\in C_0(\G)$ we have
$$\la m_{M(\G)}(\Phi^*(f)),x\ra=\la\Phi^*(f),\Gam(x)\ra=\la f,\widetilde{\Phi}(\Gam(x))\ra=\la f,x\ra.$$
Hence, $m_{M(\G)}\circ\Phi^*$ is the canonical inclusion $\LOQ\hookrightarrow M(\G)$. 


Now, by nuclearity of $C_0(\G)$, there exists a net $\vphi_i:C_0(\G)\rightarrow C_0(\G)$ of finite-rank completely positive contractions converging to the identity in the point-norm topology. Define $\psi_i:\LOQ\rightarrow M(\G)$ by
$$\psi_i=m_{M(\G)}\circ(\id\ten\vphi_i^*)\circ \Phi^*.$$
Then $\psi_i\in \ _{\LOQ}\mc{CP}(\LOQ,M(\G))=_{\LOQ}\mc{CP}(\LOQ)$. By \cite[Theorem 5.2]{D3} there exist contractive positive functionals $\nu_i\in C_u(\G)^*$ such that $\psi_i=m^r_{\nu_i}$. Since $\vphi^*_i:M(\G)\rightarrow M(\G)$ forms a bounded net converging to the identity point-weak*, and $m_{M(\G)}$ is separately weak* continuous, it follows that $\psi_i$ converges to the inclusion $\LOQ\hookrightarrow M(\G)$ point-weak*.

Write $\vphi_i=\sum_{k=1}^{n_i}x_k^i\ten \mu_k^i$ for some $x_k^i\in C_0(\G)$ and $\mu_k^i\in M(\G)$.  Since $(b)\Rightarrow(c)$ in Theorem \ref{t:Ng-Viselter} we know that $\h{\G}$ is amenable. Hence, by \cite[Proposition 5.10]{C} $M^r_{cb}(\LOQ)= _{\LOQ}\mc{CB}(\LOQ)=C_u(\G)^*$. For each $i$ and $k$ the map $(\id\ten x_k^i)\Phi^*\in _{\LOQ}\mc{CB}(\LOQ)$, so there exists $\nu_k^i$ such that $(\id\ten x_k^i)\Phi^*=m^r_{\nu_k^i}$. Thus, for $f\in\LOQ$ we have
\begin{align*}\psi_i(f)&=m_{M(\G)}\circ(\id\ten\vphi_i^*)\circ \Phi^*(f)\\
&=\sum_{k=1}^{n_i}m_{M(\G)}((\id\ten x_k^i)\Phi^*(f)\ten\mu_k^i)\\
&=\sum_{k=1}^{n_i}m_{M(\G)}(f\star\nu_k^i\ten\mu_k^i)\\
&=\sum_{k=1}^{n_i}f\star\nu_k^i\star\mu_k^i.\end{align*}
Hence, $\nu_i=\sum_{k=1}^{n_i}\nu_k^i\star\mu_k^i\in M(\G)$ as $M(\G)$ is a closed ideal in $C_u(\G)^*$. Passing to a subnet we may assume $\nu_i\rightarrow\nu$ weak* in $M(\G)$. But then for any $f\in\LOQ$ and $x\in C_0(\G)$ we have
$$\la f\star\nu, x\ra=\la\nu, x\star f\ra=\lim_i\la\nu_i,x\star f\ra=\lim_i\la f\star\nu_i,x\ra=\la f,x\ra.$$
It follows that $\nu$ is a right identity for $M(\G)$, which implies that $M(\G)$ is unital, whence $\G$ is co-amenable (cf. \cite[Theorem 3.1]{BT}).
\end{proof}

Theorem \ref{t:co-amen} generalizes, and provides a different proof of, the main result in \cite{Ng}, which says that a locally compact group $G$ is amenable if and only if $C^*_\lm(G)$ is nuclear and has a tracial state.

\begin{cor} Let $\G$ be a locally compact quantum group such that $\h{\G}$ has trivial scaling group (for instance, if $\G$ is a Kac algebra). Then $\G$ is co-amenable if and only if $C_0(\G)$ is nuclear and has a tracial state.\end{cor}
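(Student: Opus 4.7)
The plan is to obtain both implications as short consequences of results already assembled in the excerpt, using the trivial scaling group hypothesis on $\h{\G}$ as the bridge between topological inner amenability and the existence of a tracial state on $C_0(\G)$.

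For the forward direction, I would simply invoke Theorem~\ref{t:Ng-Viselter}(ii). Under the hypothesis that $\h{\G}$ has trivial scaling group, that theorem gives the implication $(a)\Rightarrow(b')$, i.e., co-amenability of $\G$ already forces $C_0(\G)$ to be nuclear and to carry a tracial state. So this direction is immediate and requires no further work.

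For the reverse direction, I would combine two inputs. First, the statement from \cite{NV} recorded in the Examples after Definition~\ref{d:WIA}: if a quantum group has trivial scaling group and its dual $C^*$-algebra admits a tracial state, then it is topologically inner amenable. Applied with $\h{\G}$ in place of $\G$ (which is permitted since $\h{\G}$ has trivial scaling group by hypothesis and $C_0(\h{\h{\G}})=C_0(\G)$ possesses a tracial state by assumption), this yields that $\h{\G}$ is topologically inner amenable. Since we also assume $C_0(\G)$ is nuclear, Theorem~\ref{t:co-amen} then delivers co-amenability of $\G$.

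There is essentially no obstacle here: the corollary is a packaging of Theorem~\ref{t:co-amen} (the main theorem of the section) together with the bi-directional correspondence between topological inner amenability of $\h{\G}$ and existence of a tracial state on $C_0(\G)$ that holds precisely when $\h{\G}$ has trivial scaling group. The only thing to be careful about is to apply the \cite{NV} implication to the dual quantum group, and to cite Theorem~\ref{t:Ng-Viselter}(ii) rather than Theorem~\ref{t:Ng-Viselter}(i) for the forward direction so that the tracial state, rather than merely the general invariant state of condition~$(b)$, is produced.
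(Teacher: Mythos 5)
Your proposal is correct and is exactly the argument the paper intends (the corollary is stated without proof as an immediate consequence): the forward direction is Theorem~\ref{t:Ng-Viselter}(ii) $(a)\Rightarrow(b')$, and the reverse direction converts the tracial state into topological inner amenability of $\h{\G}$ via the \cite{NV} result (equivalently, $(b')\Rightarrow(b)$ of Theorem~\ref{t:Ng-Viselter}(ii)) and then applies Theorem~\ref{t:co-amen}. Your care in applying the \cite{NV} implication to the dual and in identifying condition $(b)$ with topological inner amenability of $\h{\G}$ is exactly the right bookkeeping.
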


The combination of Theorem \ref{t:co-amen} with Corollary \ref{c:equiv} elucidates the relationship between co-amenability and amenability of the dual: $\h{\G}$ is amenable if and only if $C_0(\G)$ is nuclear and there exists a state $m\in\LIQ^*$ such that
$$\la m,x\widehat{\lhd} \h{f}\ra=\la m,x\ra\la \hat{f},1\ra, \ \ \ \hat{f}\in\LOQH, \ x\in \LIQ,$$
while $\G$ is co-amenable if and only if $C_0(\G)$ is nuclear and there exists a state $m\in C_0(\G)^*$ such that
$$\la m,x\widehat{\lhd} \h{f}\ra=\la m,x\ra\la \hat{f},1\ra, \ \ \ \hat{f}\in\LOQH, \ x\in C_0(\G).$$
This subtle difference can also be phrased in terms of homology: $\h{\G}$ is amenable if and only if $\LOQ$ is 1-flat \cite[Theorem 5.1]{C}, while $\G$ is co-amenable if and only if $M(\G)$ is 1-projective, as we now prove.

\begin{thm} Let $\G$ be a locally compact quantum group. Then $\G$ is co-amenable if and only if $M(\G)$ is 1-projective in $M(\G)\hskip2pt\mathbf{mod}$.\end{thm}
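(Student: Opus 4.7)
The forward direction is straightforward. By \cite[Theorem 3.1]{BT}, co-amenability of $\G$ is equivalent to $M(\G)$ being unital; let $e\in M(\G)$ be the unit. Define $\Phi:M(\G)\to M(\G)\hten M(\G)$ by $\Phi(x):=x\otimes e$. Since the left $M(\G)$-action on $M(\G)\hten M(\G)$ is on the first tensor factor, $\Phi$ is a completely contractive left $M(\G)$-module section of the multiplication $m$; as $M(\G)$ is essential, this gives relative $1$-projectivity by the operator analogue of \cite[Proposition 1.2]{DP}. To upgrade to full $1$-projectivity, given a complete quotient $\Psi:Y\twoheadrightarrow Z$ of left $M(\G)$-modules and morphism $\Phi':M(\G)\to Z$, lift $\Phi'(e)\in Z$ to some $y\in Y$ with $\norm{y}<\norm{\Phi'}_{cb}+\varepsilon$, and set $\widetilde{\Phi}'(x):=x\cdot y$; then $\widetilde{\Phi}'$ is a module lift of $\Phi'$ with the required cb-norm bound.

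For the converse, suppose $M(\G)$ is $1$-projective in $M(\G)\hskip 2pt\mathbf{mod}$. The extended multiplication $m^+:M(\G)^+\hten M(\G)\to M(\G)$ is a complete quotient morphism, since $x=m^+(u\otimes x)$ for $u$ the formal unit of $M(\G)^+$. Applying $1$-projectivity to $\id_{M(\G)}$ produces, for every $\varepsilon>0$, a left $M(\G)$-module section $\sigma_\varepsilon:M(\G)\to M(\G)^+\hten M(\G)$ of $m^+$ with $\norm{\sigma_\varepsilon}_{cb}<1+\varepsilon$. Decompose $M(\G)^+\hten M(\G)\cong (M(\G)\hten M(\G))\oplus_1 M(\G)$ and write $\sigma_\varepsilon=(A_\varepsilon,b_\varepsilon)$. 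Relative to the left $M(\G)$-action $a\cdot(Y,y)=(a\cdot_1 Y+a\otimes y,0)$, the module-map property forces $b_\varepsilon$ to vanish on all products $ax$, and hence by continuity on $\overline{\LOQ\star\LOQ}=\LOQ$ (using self-inducedness). Consequently $m(A_\varepsilon(f))=f$ and $A_\varepsilon(af)=a\cdot_1 A_\varepsilon(f)$ for every $f\in\LOQ$ and $a\in M(\G)$.

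The main obstacle is to extract an identity in $M(\G)$ from the restricted sections $A_\varepsilon:\LOQ\to M(\G)\hten M(\G)$. The plan is to slice $A_\varepsilon(f)$ on the second tensor factor against suitable elements of $C_0(\G)\subseteq M(\G)^*$, producing for each $f\in\LOQ$ a uniformly bounded net in $M(\G)$ whose weak$^*$-cluster points, in view of $m\circ A_\varepsilon=\id_{\LOQ}$ and $A_\varepsilon(af)=a\cdot_1 A_\varepsilon(f)$, act as a left identity on the ideal $\LOQ$. Faithfulness and nondegeneracy of the left $M(\G)$-action on $\LOQ$ (via $\overline{\LOQ\star C_0(\G)}=C_0(\G)$) then promote such a left identity to a two-sided unit of $M(\G)$, yielding co-amenability by \cite[Theorem 3.1]{BT}. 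The delicate step is arranging the averaging over $C_0(\G)$ so that the weak$^*$-limit genuinely provides the identity rather than a degenerate element; this mirrors in spirit the averaging argument used in the proof of Theorem~\ref{t:co-amen}, and is where the full tensorial structure of $A_\varepsilon$ must be exploited.
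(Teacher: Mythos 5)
Your forward direction is correct and is essentially the paper's: a unital completely contractive Banach algebra whose identity has norm one is $1$-projective, and your explicit lift $\widetilde{\Phi}'(x)=x\cdot y$ is the standard argument for that fact. The converse, however, is not a proof. Your final paragraph announces a ``plan'' to slice $A_\varepsilon(f)$ on the second leg against elements of $C_0(\G)$ and concedes that the ``delicate step'' of making the weak* cluster points into an identity is left open; that step is exactly where the content of the theorem lies, and the route you sketch cannot be completed from the module section alone. To convert the sliced maps $f\mapsto(\id\ten x)(A_\varepsilon(f))$, which are merely completely bounded left $\LOQ$-module maps on $\LOQ$, into elements of $M(\G)$ (or even of $C_u(\G)^*$) one needs the identification $_{\LOQ}\mc{CB}(\LOQ)=C_u(\G)^*$, which is available only when $\h{\G}$ is amenable; and to make the resulting net cluster to an identity rather than degenerate one needs finite-rank approximations of $\id_{C_0(\G)}$, i.e.\ nuclearity of $C_0(\G)$. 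Neither ingredient is produced anywhere in your argument.

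The paper obtains both by first extracting amenability of $\h{\G}$ from the hypothesis: $1$-projectivity of $M(\G)$ dualizes to $1$-injectivity of $M(\G)^*=C_0(\G)^{**}$ in $\mathbf{mod}\hskip2pt M(\G)$, so the inclusion $M(C_0(\G))\hookrightarrow C_0(\G)^{**}$ extends to a unital completely positive $M(\G)$-module map $\Phi:\BLTQ\rightarrow C_0(\G)^{**}$; since $(\id\ten\Phi)(\h{W})=\h{W}$, a multiplicative-domain argument shows $\Phi$ is an $\LOQH$-module map, and composing with the canonical surjection $C_0(\G)^{**}\rightarrow\LIQ$ yields an invariant mean on $\LIQH$. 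Hence $\h{\G}$ is amenable and $C_0(\G)$ is nuclear, and only then can the averaging argument of Theorem~\ref{t:co-amen} be run on the section $\Phi_\varepsilon:M(\G)\rightarrow M(\G)^+\hten M(\G)$ to produce a bounded net clustering to a right identity in $M(\G)$. Your write-up contains no substitute for this derivation of amenability of $\h{\G}$, so the converse direction has a genuine gap.
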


\begin{proof} If $\G$ is co-amenable then $M(\G)$ is unital by \cite[Theorem 3.1]{BT}, and moreover the unit has norm one. Any unital completely contractive Banach algebra $\mc{A}$ is $\norm{e_{\mc{A}}}$-projective, so the claim follows.

Conversely, if $M(\G)$ is 1-projective, then $C_0(\G)^{**}=M(\G)^*$ is 1-injective in $\mathbf{mod}-M(\G)$. It follows that the inclusion morphism $M(C_0(\G))\hookrightarrow C_0(\G)^{**}$ extends to a unital completely contractive, hence completely positive $M(\G)$-module map $\Phi: \BLTQ\rightarrow C_0(\G)^{**}$. Applying the argument of \cite[Theorem 3.2]{NV} we see that $(\id\ten\Phi):C_0(\G)^{**}\oten\BLTQ\rightarrow C_0(\G)^{**}\oten C_0(\G)^{**}$ is a unital completely positive morphism satisfying $(\id\ten \Phi)(\h{W})=\h{W}$. By unitarity it follows that $\h{W}$ is in the multiplicative domain of $(\id\ten\Phi)$, and hence
$$(\id\ten\Phi)(\h{W}^*X\h{W})=\h{W}^*(\id\ten\Phi)(X)\h{W}, \ \ \ X\in C_0(\G)^{**}\oten\BLTQ.$$
In particular, for every $T\in\BLTQ$ and $\hat{f}\in\LOQH$ we have
\begin{align*}\Phi(T\h{\lhd}\hat{f})&=\Phi((\hat{f}\ten\id)(\h{W}^*(1\ten T)\h{W}))=(\hat{f}\ten\id)(\id\ten\Phi)((\h{W}^*(1\ten T)\h{W}))\\
&=(\hat{f}\ten\id)(\h{W}^*(1\ten \Phi(T))\h{W})=\Phi(T)\h{\lhd}\hat{f}.\end{align*}
Thus, $\Phi$ is a morphism with respect to the canonical $\LOQH$-module structure on $C_0(\G)^{**}$. Moreover, the $M(\G)$-module property entails $\pi\circ\Phi(\LIQH)=\C1$, where $\pi:C_0(\G)^{**}\rightarrow\LIQ$ is the canonical surjection. Since $\pi$ is clearly an $\LOQH$-module map, it follows that $\pi\circ\Phi|_{\LIQH}$ is an invariant mean, entailing the amenability of $\h{\G}$ and therefore the nuclearity of $C_0(\G)$. 

Now, by 1-projectivity of $M(\G)$, for every $\ep>0$ there exists a morphism $\Phi_\ep:M(\G)\rightarrow M(\G)^+\hten M(\G)$ such that $m^+\circ \Phi_\ep=\id_{M(\G)}$, and $\norm{\Phi_\ep}_{cb}<1+\ep$. By amenability of $\h{\G}$, we know $C_u(\G)^*= _{\LOQ}\mc{CB}(\LOQ)$ \cite[Proposition 5.10]{C}, so one does not require the complete positivity of $\Phi_\ep$ to perform the averaging argument from Theorem \ref{t:co-amen}, which yields a bounded net $(\mu_i)$ in $M(\G)$ that clusters to a right identity, entailing the co-amenability of $\G$.
\end{proof}

The averaging argument used above, together with its variants used in \cite{Ar2,C}, shows that it is \textit{inner amenability}, as opposed to discreteness, that underlies the original averaging technique of Haagerup. In the setting of unimodular discrete quantum groups $\h{\G}$, the technique relies on the existence of a normal left inverse $\Phi:\LIQ\oten \LIQ\rightarrow\LIQ$ to the co-multiplication that is an $\LOQ$-module map. Such a map is typically built from a trace-preserving normal conditional expectation $E:\LIQ\oten \LIQ\rightarrow \Gam(\LIQ)$ onto the image of $\Gam$ (see \cite[Theorem 7.5]{DFSW} and \cite[Theorem 5.5]{KR}). It is the combination of the $\LOQ$-module property of $\Phi$ together with a suitable finite-dimensional approximation that allows one to average approximation properties of $\LIQ$ or $C_0(\G)$ to approximation properties of $\h{\G}$. Thus, provided one has a suitably nice $\LOQ$-module left inverse to the co-multiplication, the same averaging technique applies. This is where inner amenability enters the picture.

Recall that a locally compact quantum group $\G$ is \textit{weakly amenable} if there exists an approximate identity $(\hat{f}_i)$ in $\LOQH$ which is bounded in $\McbQHl$. The infimum of bounds for such approximate identities is the Cowling--Haagerup constant of $\G$, and is denoted $\Lambda_{cb}(\G)$. We say that $\G$ has the \textit{approximation property} if there exists a net $(\hat{f}_i)$ in $\LOQH$ such that $\h{\Theta}^l(\hat{\lm}(\hat{f}_i))$ converges to $\id_{\LIQH}$ in the stable point-weak* topology.

\begin{prop}\label{p:AP} Let $\G$ be a locally compact quantum group whose dual $\h{\G}$ is strongly inner amenable.

\begin{enumerate}[label=(\roman*)]
\item If $\LIQ$ has the w*CBAP then $\h{\G}$ is weakly amenable with $\Lambda_{cb}(\h{\G})\leq\Lambda_{cb}(\LIQ)$.
\item $\LIQ$ has the w*OAP if and only if $\h{\G}$ has the approximation property.
\end{enumerate}
\end{prop}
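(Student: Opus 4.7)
The plan is to use a Haagerup-style averaging, where the role of unimodular discreteness is played by strong inner amenability of $\h{\G}$ as described in the paragraph preceding the proposition.

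First, I would produce the averaging maps. By Proposition~\ref{p:implication}, $\h{\G}$ is inner amenable. The proof of Proposition~\ref{p:IA=>relinj} applied to $\h{\G}$ furnishes a completely contractive right $\LOQ$-module left inverse $\Phi:\LIQ\oten\LIQ\to\LIQ$ of $\Gam$, which the antipode equivalence (ii)$\Leftrightarrow$(iii) in that proof converts into a left $\LOQ$-module left inverse $\Phi':\LIQ\oten\LIQ\to\LIQ$ of $\Gam$. Crucially, strong (as opposed to mere) inner amenability realizes $\Phi'$ as a weak$^*$-cluster point of \emph{normal} completely contractive maps $\Phi'_i$, each built from a vector state $\om_{\xi_i}$, where $(\xi_i)\subset\LTQ$ is the net of almost conjugation-invariant unit vectors supplied by strong inner amenability. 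Each $\Phi'_i$ is then a normal completely contractive left inverse to $\Gam$, and is asymptotically a left $\LOQ$-module map.

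Next, I would transfer the approximation net across $\Phi'_i$. Let $(T_\alpha)$ be a net of w$^*$-continuous finite-rank cb maps on $\LIQ$ realizing the w$^*$CBAP, with $\norm{T_\alpha}_{cb}\to\Lambda_{cb}(\LIQ)$ and $T_\alpha\to\id_{\LIQ}$ point-w$^*$ (analogously in the stable point-w$^*$ topology for w$^*$OAP in part (ii)). Form
$$S_{\alpha,i} := \Phi'_i\circ(T_\alpha\oten\id)\circ\Gam:\LIQ\to\LIQ.$$
Each $S_{\alpha,i}$ is normal and cb with cb-norm at most $\norm{T_\alpha}_{cb}$. Since $T_\alpha\oten\id$ commutes with the appropriate $\LOQ$-action on $\LIQ\oten\LIQ$ (acting only on the first tensor factor while the relevant $\LOQ$-action lives on the second), the asymptotic left-module property of $\Phi'_i$ transfers to $S_{\alpha,i}$. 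By the completely isometric identification ${}_{\LOQ}\mc{CB}^\sigma(\LIQ)\cong\McbQl$ of \cite[Proposition 4.2]{JNR}, $S_{\alpha,i}$ corresponds, in the weak$^*$ limit, to a multiplier $\hat{b}_{\alpha,i}\in\McbQl$ of norm at most $\norm{T_\alpha}_{cb}$. The finite-rankness of $T_\alpha$, together with the vector-state form of $\om_{\xi_i}$ in $\Phi'_i$, forces $\hat{b}_{\alpha,i}$ to lie in $\lm(\LOQ)\subset\LIQH$; accordingly, $\hat{b}_{\alpha,i} = \lm(f_{\alpha,i})$ for some $f_{\alpha,i}\in\LOQ$.

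Finally, I would extract an approximate identity and address the converse in (ii). Since $T_\alpha\to\id_{\LIQ}$ point-w$^*$, a diagonal subnet argument and Mazur's theorem applied to convex combinations produce a net $(f_\beta)\subset\LOQ$ bounded by $\Lambda_{cb}(\LIQ)$ in $\McbQl$ that is a norm approximate identity in $\LOQ$, establishing (i). The identical argument with stable point-w$^*$ convergence yields the forward direction of (ii). The converse in (ii) is more direct: if $\h{\G}$ has the AP there exists $(f_i)\subset\LOQ$ with $\Theta^l(\lm(f_i))\to\id_{\LIQ}$ stably point-w$^*$, and since each $\lm(f_i)\in C_0(\h{\G})$ is norm-approximable by finite rank operators on $\LTQ$, we obtain w$^*$-continuous finite-rank cb maps on $\LIQ$ converging stably point-w$^*$ to $\id_{\LIQ}$, yielding the w$^*$OAP. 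The main obstacle is the refinement that $\hat{b}_{\alpha,i}$ lies in $\lm(\LOQ)$ rather than merely in $\McbQl\subseteq\LIQH$; this is the crux of the averaging technique, and the finite-rankness of $T_\alpha$ is decisive here.
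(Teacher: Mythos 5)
Your overall strategy (Haagerup averaging through a module left inverse of $\Gam$ supplied by strong inner amenability) is the paper's, but there are two genuine gaps in the execution. First, you have the exact and asymptotic properties of the averaging maps reversed. What \cite[Lemma 4.1]{RX} gives, and what the paper uses, is that each
$\Phi_i(X)=(\om_{\xi_i}\ten\id)(W^*(U^*\ten 1)X(U\ten 1)W)$ is \emph{exactly} a unital completely positive, normal, left $\LOQ$-module map for every $i$; it is the identity $\Phi\circ\Gam=\id_{\LIQ}$ that holds only for the weak* cluster point, via the asymptotic conjugation invariance of $(\xi_i)$. Your version --- each $\Phi_i'$ an exact left inverse to $\Gam$ but only asymptotically a module map --- is false for a fixed vector state, and the reversal is not harmless: the next step needs the exact module property and exact complete positivity of $\Phi_i$ for each fixed $i$. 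If the module property only appears in the limit, the limit map is no longer normal, the identification with $\McbQl$ via ${}_{\LOQ}\mc{CB}^{\sigma}(\LIQ)$ breaks down, and no element of $\LOQ$ can be extracted index by index.

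Second, the step you yourself flag as the crux --- that the averaged map corresponds to a multiplier lying in $\lm(\LOQ)$ rather than merely in $\McbQl$ --- is asserted, not proved; ``finite-rankness of $T_\alpha$ together with the vector-state form of $\om_{\xi_i}$ forces $\hat b_{\alpha,i}\in\lm(\LOQ)$'' is not an argument. The actual mechanism is: write $T_\alpha=\sum_k x_k\ten f_k$ with $x_k\in\LIQ$, $f_k\in\LOQ$, so that $S_{\alpha,i}(x)=\sum_k\Phi_i(x_k\ten(x\star f_k))$; decompose each $x_k$ into positive parts, so that $x\mapsto\Phi_i(x_k\ten x)$ is a linear combination of \emph{normal completely positive} left $\LOQ$-module maps (this uses the complete positivity of $\Phi_i$); apply \cite[Theorem 5.2]{D3} to realize each such map as right multiplication by some $\mu_k\in C_u(\G)^*$; and conclude $S_{\alpha,i}(x)=x\star\bigl(\sum_k f_k\star\mu_k\bigr)$ with $\sum_k f_k\star\mu_k\in\LOQ$ because $\LOQ$ is a closed two-sided ideal in $C_u(\G)^*$. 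None of these three ingredients (complete positivity, Daws' classification of completely positive module maps, the ideal property) appears in your proposal, and without them the argument does not close. A smaller issue: your converse in (ii) is too quick --- compactness of $\lm(f_i)$ does not make $x\mapsto x\star f_i$ a finite-rank map on $\LIQ$; this direction requires the Kraus--Ruan argument \cite[Theorem 4.15]{KR}, which the paper invokes.
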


\begin{proof} Let $(\xi_i)$ be a net of asymptotically conjugation invariant unit vectors in $\LTQ$. It follows verbatim from \cite[Lemma 4.1]{RX} that 
$$\Phi_i:\LIQ\oten\LIQ\ni X\mapsto (\om_{\xi_i}\ten\id)(W^*(U^*\ten 1)X(U\ten 1)W)\in\LIQ.$$
defines a net of unital completely positive left $\LOQ$-module maps, which cluster weak* in 
$$\mc{CB}(\LIQ\oten\LIQ,\LIQ)=((\LIQ\oten\LIQ)\hten\LOQ)^*$$
to a module left inverse to $\Gamma$. Passing to a subnet we may assume convergence. 

$(i)$ Let $(\vphi_j)$ be a net of finite-rank, normal, completely bounded maps converging to the identity point-weak*, with $\norm{\vphi_j}_{cb}\leq C$. Since $\vphi_j$ is finite-rank, there exists $f^j_1,...,f^j_{n_j}\in\LOQ$ and $x^j_1,...,x^j_{n_j}\in\LIQ$ such that $\vphi_j=\sum_{k=1}^{n_j} x^j_k\ten f^j_k$. Put
$$\Phi_{ij}=\Phi_i\circ(\vphi_j\ten\id)\circ\Gam:\LIQ\rightarrow\LIQ.$$
Then $\Phi_{ij}$ is a normal completely bounded left $\LOQ$-module map with $\norm{\Phi_{ij}}_{cb}\leq C$. Also, for each $i,j$ and $1\leq k\leq n_j$, the map
\begin{equation}\label{e:e1}\LIQ\ni x\mapsto \Phi_i(x^j_k\ten x)\in\LIQ\end{equation}
is a normal completely bounded left $\LOQ$-module map. Since $x^j_k$ is a linear combination of positive elements in $\LIQ$, and $\Phi_i$ is completely positive, the map (\ref{e:e1}) is a linear combination of normal completely positive $\LOQ$-module maps $\LIQ\rightarrow\LIQ$. By \cite[Theorem 5.2]{D3}, there exist $\mu^{ij}_k\in C_u(\G)^*$ such that (\ref{e:e1}) is given by right multiplication by $\mu^{ij}_k$. Hence, for each $x\in \LIQ$
\begin{align*}\Phi_{ij}(x)&=\Phi_i\circ(\vphi_j\ten\id)\circ\Gam(x)\\
&=\sum_{k=1}^{n_j}\Phi_i(x^j_k \ten x\star f^j_k)\\
&=\sum_{k=1}^{n_j}x\star f^j_k\star\mu^{ij}_k\\
&=x\star f_{ij},\end{align*}
where $f_{ij}=\sum_{k=1}^{n_i} f^j_k\star\mu^{ij}_k\in\LOQ$ as $\LOQ$ is a closed ideal in $C_u(\G)^*$. Thus, $\norm{f_{ij}}_{cb}\leq C$, and it follows that
$$\lim_i\lim_jx\star f_{ij}=\lim_i\Phi_i(\Gamma(x))=x,$$
point-weak* in $\LIQ$. Combining the iterated limit into a single net as in Proposition \ref{p:bicrossed} and appealing to the standard convexity argument, it follows that $\h{\G}$ is weakly amenable with $\Lambda_{cb}(\h{\G})\leq\Lambda_{cb}(\LIQ)$.

$(ii)$ The proof that w*OAP  implies the approximation property follows by a similar argument to $(i)$, appealing to well-known properties of the stable point-weak* topology on von Neumann algebras (cf. \cite[Proposition 1.7]{HK}). The converse was shown for Kac algebras in \cite[Theorem 4.15]{KR}. Their proof extends verbatim to arbitrary locally compact quantum groups. 
\end{proof}

\begin{cor} Let $G$ be an inner amenable locally compact group. 

\begin{enumerate}[label=(\roman*)]
\item If $VN(G)$ has the w*CBAP then $G$ is weakly amenable.
\item $VN(G)$ has the w*OAP if and only if $G$ has the approximation property.
\end{enumerate}
\end{cor}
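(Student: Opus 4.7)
The plan is to apply Proposition~\ref{p:AP} to the co-commutative quantum group $\G_s$ associated to $G$. With this choice, $\LIQ = L^{\infty}(\G_s) = VN(G)$ and $\h{\G} = \G_a$ is the commutative quantum group of $G$, so the hypothesis ``$\LIQ$ has the w*CBAP'' (resp. w*OAP) in the proposition is literally the hypothesis ``$VN(G)$ has the w*CBAP'' (resp. w*OAP) appearing in the corollary.

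The only remaining hypothesis of Proposition~\ref{p:AP} that must be matched is strong inner amenability of $\h{\G}$. This is where Proposition~\ref{p:equal} does the work for us: $\h{\G} = \G_a$ is strongly inner amenable if and only if the locally compact group $G$ is inner amenable. Thus the inner amenability assumption on $G$ is precisely what is needed to invoke Proposition~\ref{p:AP} with $\G = \G_s$.

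For (i), applying Proposition~\ref{p:AP}(i) yields that $\h{\G} = \G_a$ is weakly amenable in the sense of this paper, with $\Lambda_{cb}(\G_a) \leq \Lambda_{cb}(VN(G))$. Unpacking the definition, one obtains a bounded approximate identity in $L^1(\h{\G_a}) = L^1(\G_s) = A(G)$ whose bound is taken in $\McblA$ with $\mathcal{A} = L^1(\G_s)$, i.e.\ in $M_{cb}A(G)$. This is precisely the Cowling--Haagerup definition of $G$ being weakly amenable.

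For (ii), Proposition~\ref{p:AP}(ii) gives that $VN(G) = \LIQ$ has the w*OAP if and only if $\G_a = \h{\G}$ has the approximation property. The latter asks for a net $(\hat{f}_i) \subset L^1(\h{\G_a}) = A(G)$ whose associated completely bounded multipliers converge to $\id_{VN(G)}$ in the stable point-weak* topology, which is exactly the Haagerup--Kraus approximation property for $G$. There is no genuine obstacle here: the corollary is simply the specialization of Proposition~\ref{p:AP} to $\G = \G_s$, with Proposition~\ref{p:equal} supplying the translation between inner amenability of $G$ and strong inner amenability of $\G_a$.
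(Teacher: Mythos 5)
Your proposal is correct and is exactly the intended argument: the paper states the corollary without proof as the specialization of Proposition~\ref{p:AP} to $\G=\G_s$, where $L^\infty(\G_s)=VN(G)$, $\h{\G_s}=\G_a$, and Proposition~\ref{p:equal} converts inner amenability of $G$ into strong inner amenability of $\G_a$. Your unpacking of the quantum-group definitions of weak amenability and the approximation property into the classical Cowling--Haagerup and Haagerup--Kraus notions is also accurate.
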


\begin{remark} From the homological perspective, the (potential) distinction between strong inner amenability and inner amenability of $\h{\G}$, is that the former case generates an $\LOQ$-module left inverse to $\Gam$ that can be approximated by normal $\LOQ$-module maps. It is not clear if a similar approximation can be achieved in the latter case, whence the strong inner amenability assumption in Proposition \ref{p:AP}.  \end{remark}

\begin{remark} To the author's knowledge, the converse of Proposition \ref{p:AP} $(i)$ is not known to hold even in the co-commutative setting. That is, if $G$ is a weakly amenable locally compact group, does $VN(G)$ have the weak*CBAP? By \cite[Theorem 3.2]{HK} it is known that in this case $\id_{VN(G)}$ can be approximated in the point-weak* topology by a bounded net in $\mc{CB}^\sigma(VN(G))$, each of whose elements is the limit in the point-weak* topology of bounded net of finite-rank elements in $\mc{CB}^\sigma(VN(G))$. \end{remark} 

\begin{prop} Let $\G$ be a topologically inner amenable locally compact quantum group.

\begin{enumerate}[label=(\roman*)]
\item If $C_0(\G)$ has the CBAP, then there exists a net $(\nu_i)$ in $M(\G)$ such that $\norm{\nu_i}_{cb}\leq\Lm_{cb}(C_0(\G))$ and $\nu_i\rightarrow 1$ $\sigma(\McbQr,\QcbQr)$.
\item If $C_0(\G)$ has the OAP then $M(\G)$ is $\sigma(\McbQr,\QcbQr)$-dense in $\McbQr$.
\end{enumerate}
\end{prop}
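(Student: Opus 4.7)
The plan is to mirror the averaging argument of Proposition~\ref{p:AP} at the $C^*$-algebraic level: as emphasised in the discussion preceding this proposition, it is (topological) inner amenability---not discreteness or nuclearity---that underlies the Haagerup-type averaging. The topologically inner invariant state $m\in C_0(\h{\G})^*$ supplied by hypothesis will play the role of the vectors $\xi_i$ in the proof of Proposition~\ref{p:AP} and of the state on $C_0(\G)$ in the proof of Theorem~\ref{t:co-amen}.

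The key preliminary step is to build from $m$ a completely positive, completely contractive $\LOQ$-module map
$$\Phi\colon C_0(\G)\ten_{\min}C_0(\G)\longrightarrow M(C_0(\G))$$
whose unique strict extension $\widetilde\Phi\colon M(C_0(\G)\ten_{\min}C_0(\G))\to M(C_0(\G))$ satisfies $\widetilde\Phi\circ\Gam|_{C_0(\G)}=\id_{C_0(\G)}$. The construction mirrors that of Theorem~\ref{t:co-amen}, with the roles of $C_0(\G)$ and $C_0(\h{\G})$ interchanged: slice by the strictly continuous extension of $m$ on the $C_0(\h{\G})$-factor, using the multiplicative unitary $\h W=\sigma W^*\sigma\in M(C_0(\h{\G})\ten_{\min}C_0(\G))$. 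The $\lhd$-invariance of $m$ forces the appropriate module property, exactly as in Proposition~\ref{p:IA=>relinj}, and essentiality of $C_0(\G)$ combined with \cite[Corollary~5.7]{Lance} keeps the image of $\widetilde\Phi$ inside $M(C_0(\G))$ rather than merely in $\BLTQ$.

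For (i), given a CBAP net $(\vphi_j)\subseteq\mc{CB}(C_0(\G))$ of finite-rank maps with $\sup_j\norm{\vphi_j}_{cb}\le\Lm_{cb}(C_0(\G))$ converging to $\id_{C_0(\G)}$ in the point-norm topology, write each $\vphi_j=\sum_{k=1}^{n_j}x_k^j\ten\mu_k^j$ with $x_k^j\in C_0(\G)$ and $\mu_k^j\in M(\G)$, and set
$$\Phi_j:=\widetilde\Phi\circ(\vphi_j\ten\id)\circ\Gam|_{C_0(\G)}\colon C_0(\G)\longrightarrow M(C_0(\G)).$$
Each $\Phi_j$ is a cb $\LOQ$-module map with $\norm{\Phi_j}_{cb}\le\Lm_{cb}(C_0(\G))$. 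Splitting each $x_k^j$ into its positive parts, the auxiliary map $y\mapsto\Phi(x_k^j\ten y)$ is a linear combination of completely positive $\LOQ$-module maps from $C_0(\G)$ into $M(C_0(\G))$, so by \cite[Theorem~5.2]{D3} each such cp module map is right multiplication by a contractive positive functional in $C_u(\G)^*$. Assembling yields $\Phi_j=m^r_{\nu_j}$ for some $\nu_j\in C_u(\G)^*$; since the $\mu_k^j$ lie in $M(\G)$ and $M(\G)$ is a closed two-sided ideal in $C_u(\G)^*$, in fact $\nu_j\in M(\G)$. The convergence $\nu_j\to 1$ in the $\sigma(\McbQr,\QcbQr)$ topology follows from $\Phi_j\to\id_{C_0(\G)}$ in point-norm (via $\widetilde\Phi\circ\Gam|_{C_0(\G)}=\id$ and $\vphi_j\to\id$ point-norm), by unfolding the pairing against the generators of $\QcbQr$ (the right-sided analogue of the description $\QcbQl=\{\Om_{A,\rho}\}$ recalled in the Preliminaries). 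Part (ii) is proved identically, with stable point-norm convergence replacing point-norm convergence, yielding $\sigma(\McbQr,\QcbQr)$-density of $M(\G)$ in $\McbQr$.

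The main obstacle is the first step: constructing $\Phi$ with image in $M(C_0(\G))$ (rather than merely $\BLTQ$) and arranging the module structure on the correct side to accord with the specific $\lhd$-invariance of $m$; the remaining averaging and ideal-property extraction of $\nu_j$ closely follow the pattern of Theorem~\ref{t:co-amen}, now with complete positivity replaced by complete boundedness for the approximating net $\vphi_j$.
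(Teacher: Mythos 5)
Your averaging machinery is the paper's: writing $\vphi_j=\sum_k x_k^j\ten\mu_k^j$, splitting the $x_k^j$ into positive parts so that \cite[Theorem 5.2]{D3} realizes the resulting completely positive $\LOQ$-module maps as right multiplications by elements of $C_u(\G)^*$, using the ideal property of $M(\G)$ in $C_u(\G)^*$ to place $\nu_j$ in $M(\G)$, and deducing $\sigma(\McbQr,\QcbQr)$-convergence from uniform boundedness plus point-norm convergence. (The paper runs this on the predual side, via $\psi_i=m_{M(\G)}\circ(\id\ten\vphi_i^*)\circ\Phi$ with $\Phi:\LOQ\rightarrow M(\G)\hten M(\G)$, but that is only a cosmetic difference from your $C^*$-level formulation.)

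The gap is in your first step. The paper does not construct a new map here: it imports the completely positive left $\LOQ$-module map $\Phi$ \emph{already built in Theorem \ref{t:co-amen}}, which is obtained from a state on $C_0(\G)$ that is right $\widehat{\lhd}$-invariant (i.e.\ from topological inner amenability of $\h{\G}$), by conjugating with $V\in M(\KLTQ\ten_{\min}C_0(\G))$ and slicing with the strict extension of that state. You instead start from the state $m\in C_0(\h{\G})^*$ that the hypothesis literally supplies and propose to ``interchange the roles of $C_0(\G)$ and $C_0(\h{\G})$,'' slicing $\h{W}$ by $m$. This cannot yield an $\LOQ$-module left inverse to $\Gam$ on $C_0(\G)\ten_{\min}C_0(\G)$: the $\lhd$-invariance of $m$ is invariance for the coaction $\hat{x}\mapsto W^*(1\ten\hat{x})W\in\LIQ\oten\LIQH$ of $\G$ on $C_0(\h{\G})$, and by the duality encoded in Proposition \ref{p:IA=>relinj} what it produces is an $\LOQH$-module left inverse to $\h{\Gam}$ --- the tool for averaging approximation properties of $C_0(\h{\G})$ into multipliers of $\LOQH$, not of $C_0(\G)$ into $\McbQr$. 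The domain and codomain of the map you need live on the $C_0(\G)$ side while your invariance lives on the $C_0(\h{\G})$ side, and no flip reconciles the two. (The source of your difficulty is that the proposition's hypothesis, read against Definition \ref{d:WIA}(iii), places the invariant state on the wrong algebra for the stated conclusion; the paper's proof tacitly assumes the hypothesis of Theorem \ref{t:co-amen}, namely topological inner amenability of $\h{\G}$, just as Proposition \ref{p:AP} assumes strong inner amenability of $\h{\G}$. The repair is not to dualize the construction but to take the $\widehat{\lhd}$-invariant state on $C_0(\G)$ and quote the construction of Theorem \ref{t:co-amen} verbatim.)
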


\begin{proof} Let $\Phi:\LOQ\rightarrow M(\G)\hten M(\G)$ be the completely positive left $\LOQ$-module map constructed in Theorem \ref{t:co-amen} from topological inner amenability. Recall that $m_{M(\G)}\circ \Phi$ is the canonical inclusion $\LOQ\hookrightarrow M(\G)$. 

$(i)$ Let $\vphi_i:C_0(\G)\rightarrow C_0(\G)$ be a net of finite-rank completely bounded maps converging to the identity in the point-norm topology such that $\norm{\vphi_i}_{cb}\leq C$. Write $\vphi_i=\sum_{k=1}^{n_i}x_k^i\ten \mu_k^i$ for some $x_k^i\in C_0(\G)$ and $\mu_k^i\in M(\G)$.  For each $i$ and $k$, $x_k^i$ is a linear combination of positive elements in $C_0(\G)$, and hence the map $(\id\ten x_k^i)\Phi\in _{\LOQ}\mc{CB}(\LOQ)$ is a linear combination of completely positive left $\LOQ$-module maps on $\LOQ$, so by \cite[Theorem 5.2]{D3} there exists $\nu_k^i$ such that $(\id\ten x_k^i)\Phi=m^r_{\nu_k^i}$.

Define $\psi_i:\LOQ\rightarrow M(\G)$ by $\psi_i=m_{M(\G)}\circ(\id\ten\vphi_i^*)\circ \Phi$.
Since $\vphi^*_i:M(\G)\rightarrow M(\G)$ forms a bounded net converging to the identity point-weak*, and $m_{M(\G)}$ is separately weak* continuous, it follows that $\psi_i$ converges to the inclusion $\LOQ\hookrightarrow M(\G)$ point-weak*. Moreover, for $f\in\LOQ$ we have
\begin{align*}\psi_i(f)&=m_{M(\G)}\circ(\id\ten\vphi_i^*)\circ \Phi(f)\\
&=\sum_{k=1}^{n_i}m_{M(\G)}((\id\ten x_k^i)\Phi(f)\ten\mu_k^i)\\
&=\sum_{k=1}^{n_i}m_{M(\G)}(f\star\nu_k^i\ten\mu_k^i)\\
&=\sum_{k=1}^{n_i}f\star\nu_k^i\star\mu_k^i\\
&=f\star\nu_i,\end{align*}
where $\nu_i=\sum_{k=1}^{n_i}\nu_k^i\star\mu_k^i\in M(\G)$ as $M(\G)$ is a closed ideal in $C_u(\G)^*$. Then $\norm{\nu_i}_{cb}\leq C$, and it follows that $\nu_i\star x\rightarrow x$ weak* in $\LIQ$ for all $x\in C_0(\G)$. Since $(\nu_i)$ is bounded in $\mc{CB}_{\LOQ}(C_0(\G),\LIQ)$, we have $\nu_i\rightarrow 1$ $\sigma(\McbQr,\QcbQr)$.

$(ii)$ The proof follows similarly to $(i)$, averaging the finite-rank maps arising from the OAP of $C_0(\G)$ and using the stable point-norm topology together with the structure of $\QcbQr$. 
\end{proof}

\section{Self-duality of Biflatness}

As in the one-sided case, for a completely contractive Banach algebra $\mc{A}$, we say that an operator $\mc{A}$-bimodule $X$ is \e{$C$-biflat} (respectively, \e{relatively $C$-biflat}) if its dual $X^*$ is $C$-injective (respectively, relatively $C$-injective) in $\AmodA$. Equivalently, $X$ is $C$-biflat if for any 1-exact sequence 
$$0\rightarrow Y\hookrightarrow Z\twoheadrightarrow Z/Y\rightarrow 0$$
in $\AmodA$, the sequence
$$0\rightarrow X_{\A}\hten_{\A}Y\hookrightarrow X_{\A}\hten_{\A}Z\twoheadrightarrow X_{\A}\hten_{\A}Z/Y\rightarrow 0$$
is $C$-exact in the category of operator spaces and completely bounded maps, where $X_{\A}\hten_{\A}Y$ is the bimodule tensor product of $X$ and $Y$, defined by $X\hten Y/_{\A}N_{\A}$, where
$$_{\A}N_{\A}=\la a\cdot x\ten y - x\ten y\cdot a, \ x'\cdot a'\ten y' - x'\ten a'\cdot y'\mid a,a'\in\A, \ x,x'\in X, \ y,y'\in Y\ra.$$

In \cite[Theorem 4.3]{RX} Ruan and Xu provided a sufficient condition for relative 1-biflatness of $\LOQH$ for any Kac algebra $\G$ by means of the existence of a certain net of unit vectors $(\xi_i)$ which are asymptotically invariant under the conjugate co-representation $W\sigma V \sigma$ and for which $\om_{\xi_i}|_{\LIQ}$ is a bounded approximate identity of $\LOQ$. In the group setting, this is precisely the quasi-SIN, or QSIN condition (see \cite{LR,St2}). We now obtain the same conclusion under a priori weaker hypotheses.

\begin{prop}\label{p:relative1-bimodule} Let $\G$ be a locally compact quantum group for which there exists a right invariant mean $m\in\LIQ^*$ satisfying
\begin{equation}\label{e:invstate}\la m,\hat{f}'\wh{\rhd}'x\ra=\la\hat{f}',1\ra\la m,x\ra, \ \ \ \hat{f}'\in\LOQHP, \ x\in\LIQ.\end{equation}
Then $\LIQ$ is relatively 1-injective in $\LOQ\hskip2pt\mathbf{mod}\hskip2pt\LOQ$. When $\G=\G_s$ is co-commutative, the converse holds.
\end{prop}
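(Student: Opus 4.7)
The plan is to mimic the argument of Proposition~\ref{p:IA=>relinj} in the bimodule setting by combining a right $\LOQ$-module retraction (built from the $\wh{\rhd}'$-invariance of $m$) with a left $\LOQ$-module retraction (built from the right invariance of $m$) into a single bimodule retraction for the canonical embedding $\Delta:\LIQ\rightarrow\mc{CB}(\LOQ^{\mathrm{op}}\hten\LOQ,\LIQ)$, $\Delta(x)(f\ten g) = f\rhd x\lhd g$.

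First, by direct imitation of Proposition~\ref{p:IA=>relinj}, the $\wh{\rhd}'$-invariance of $m$ yields a completely contractive right $\LOQ$-module left inverse $\Phi_r:\BLTQ\oten\LIQ\rightarrow\LIQ$ of $\Gam|_{\LIQ}$, given by $\Phi_r(A)=(\id\ten m)(V^*AV)$. Symmetrically, using the factorization $\Gam(x)=W^*(1\ten x)W$, the adjoint relation $(\h{J}\ten J)W(\h{J}\ten J)=W^*$, and the right invariance of $m$, a verbatim dual argument produces a completely contractive left $\LOQ$-module left inverse $\Phi_l:\LIQ\oten\BLTQ\rightarrow\LIQ$, $\Phi_l(A)=(m\ten\id)(WAW^*)$.

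Next I combine these into a bimodule retraction. Using $(\id\ten\Gam)(V)=V_{12}V_{13}$, we have
$$\Gam^{(2)}(x):=(\Gam\ten\id)\Gam(x)=V_{12}V_{13}(x\ten 1\ten 1)V_{13}^*V_{12}^*\in\LIQ\oten\LIQ\oten\LIQ,$$
which, up to a cyclic permutation of tensor factors, represents $\Delta(x)$ under the canonical identification $\mc{CB}(\LOQ^{\mathrm{op}}\hten\LOQ,\LIQ)\cong\LIQ\oten\LIQ\oten\LIQ$. Setting
$$\Phi(A) := (m\ten\id\ten m)\bigl(W_{12}V_{23}^*AV_{23}W_{12}^*\bigr), \quad A\in\BLTQ\oten\BLTQ\oten\BLTQ,$$
the pentagonal identity $V_{12}V_{13}V_{23}=V_{23}V_{12}$ gives $V_{23}^*\Gam^{(2)}(x)V_{23}=\Gam(x)\ten 1$; then $W\Gam(x)W^*=1\ten x$ gives $W_{12}(\Gam(x)\ten 1)W_{12}^*=1\ten x\ten 1$, so $\Phi(\Gam^{(2)}(x))=x$. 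The bimodule property of $\Phi$ is inherited from the right module equivariance of the inner $V_{23}$-slice (as in Proposition~\ref{p:IA=>relinj}, via $\wh{\rhd}'$-invariance) and the left module equivariance of the outer $W_{12}$-slice (via the dual computation using right invariance). Since $\LIQ$ is a faithful $\LOQ$-bimodule, this furnishes relative $1$-injectivity in $\LOQ\hskip2pt\mathbf{mod}\hskip2pt\LOQ$ by the bimodule analogue of \cite[Proposition 1.7]{DP}.

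For the converse when $\G=\G_s$ is co-commutative, I extract the invariant mean in the spirit of the implication $(iii)\Rightarrow(iv)$ of Proposition~\ref{p:IA=>relinj}: a completely contractive bimodule left inverse $\Phi$ to $\Delta$, combined with the categorical equivalence $\AmodA\cong\mathbf{mod}\hskip2pt\LOQ^{\mathrm{op}}\hten\LOQ$ and the explicit structures $\LOQ=A(G)$, $\LIQ=VN(G)$, should yield a state $m\in VN(G)^*$ satisfying both the right and $\wh{\rhd}'$-invariance conditions. The hardest step will be verifying that the combined $\Phi$ is genuinely a bimodule morphism on all of $\mc{CB}(\LOQ^{\mathrm{op}}\hten\LOQ,\LIQ)$ (not merely on $\Delta(\LIQ)$), which demands the careful interplay of the multiplicativity properties of $V$ and $W$ with the two invariance conditions on $m$.
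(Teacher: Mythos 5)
Your proposal diverges from the paper's proof and, more importantly, leaves its central step unproven. The paper's argument is much shorter: it observes that relative $1$-injectivity in $\LOQ\hskip2pt\mathbf{mod}\hskip2pt\LOQ$ follows from a single completely contractive \emph{bimodule} left inverse $\Phi:\LIQ\oten\LIQ\rightarrow\LIQ$ to $\Gam$ (the standard criterion for Hopf--von Neumann algebras, as in \cite{Ru3,RX}, since $\LIQ\oten\LIQ\cong\mc{CB}(\LOQ,\LIQ)$ is itself relatively injective as a bimodule). It then takes the \emph{same} map $\Phi(X)=(\id\ten m)(V^*XV)$ already used in Proposition~\ref{p:IA=>relinj}: the $\wh{\rhd}'$-invariance (\ref{e:invstate}) makes it a right $\LOQ$-module map left-inverting $\Gam$, and the additional hypothesis that $m$ is a right invariant mean makes the very same map a left $\LOQ$-module map by the module argument of \cite[Theorem 5.5]{CN}. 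No iterated comultiplication and no composite retraction are needed.

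Your route instead tries to retract $\Gam^{(2)}=(\Gam\ten\id)\Gam$ inside $\LIQ\oten\LIQ\oten\LIQ\cong\mc{CB}(\LOQ^{\mathrm{op}}\hten\LOQ,\LIQ)$ by the composite $\Phi(A)=(m\ten\id\ten m)(W_{12}V_{23}^*AV_{23}W_{12}^*)$. Your algebra for the retraction identity is correct ($V_{23}^*\Gam^{(2)}(x)V_{23}=\Gam(x)\ten1$ by the pentagon relation, and $W_{12}(\Gam(x)\ten1)W_{12}^*=1\ten x\ten1$), but the proof stands or falls on $\Phi$ being a morphism for the $\LOQ^{\mathrm{op}}\hten\LOQ$-module structure on the whole of $\LIQ\oten\LIQ\oten\LIQ$, and this is exactly what you do not prove: you assert that the inner $V_{23}$-slice handles the right action and the outer $W_{12}$-slice the left action, but for the composite to be a bimodule map you must also show that the outer slice $(m\ten\id)(W\cdot W^*)$ commutes with the \emph{right} action and that the inner slice $(\id\ten\id\ten m)(V_{23}^*\cdot V_{23})$ commutes with the \emph{left} action (or at least that the two intertwine compatibly). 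This is precisely the difficulty your last sentence concedes is open, and it is the whole content of the proposition; the paper's choice of a single map on $\LIQ\oten\LIQ$ with two invariance properties is what circumvents it. Finally, your treatment of the co-commutative converse is only a gesture: the paper extracts the doubly invariant state from \cite[Theorem 4.1]{CT} and then uses $V_s=W_a$ to identify $f\wh{\rhd}'_s x$ with $x\lhd_a f$, verifying (\ref{e:invstate}) explicitly; your sketch supplies neither ingredient.
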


\begin{proof} It suffices to provide a completely contractive $\LOQ$-bimodule map $\Phi:\LIQ\oten\LIQ\rightarrow\LIQ$ which is a left inverse to $\Gam$. Defining $\Phi(X)=(\id\ten m)(V^*XV)$, $X\in\LIQ\oten\LIQ$, as in Proposition \ref{p:IA=>relinj}, it immediately follows that $\Phi$ is a completely contractive right $\LOQ$-module map and $\Phi\circ\Gam=\id_{\LIQ}$. However, since $m$ is also a right invariant mean on $\LIQ$, the module argument from \cite[Theorem 5.5]{CN} shows that $\Phi$ is also a left $\LOQ$-module map.

When $\G=\G_s$ is co-commutative, the converse follows from the proof of \cite[Theorem 4.1]{CT}, wherein the existence of a state $m\in VN(G)^*$ invariant under both the $A(G)$-action and the right $\LO$-action was established. Owing to the fact that $V_s=W_a$ we have
\begin{align*}f\wh{\rhd}_{s}'x&=(\id\ten f)(\h{V}'_s(x\ten 1)\h{V}_s'^*)=(f\ten\id)(V_s^*(1\ten x)V_s)\\
&=(f\ten\id)(W_a^*(1\ten x)W_a)=x\lhd_a f\end{align*}
for all $f\in\LO$ and $x\in\LG$. Hence, $m$ satisfies (\ref{e:invstate}).
\end{proof}


A completely contractive Banach algebra $\mc{A}$ is \e{operator amenable} if it is relatively $C$-biflat in $\AmodA$ for some $C\geq1$, and has a bounded approximate identity. By \cite[Proposition 2.4]{Ru3} this is equivalent to the existence of a bounded approximate diagonal in $\mc{A}\hten\mc{A}$, that is, a bounded net $(A_\alpha)$ in $\mc{A}\hten\mc{A}$ satisfying
$$a\cdot A_\alpha - A_\alpha\cdot a, \ m_{\mc{A}}(A_\alpha)\cdot a \rightarrow 0, \ \ \ a\in\mc{A}.$$
We let $OA(\mc{A})$ denote the \textit{operator amenability constant of $\mc{A}$}, the infimum of all bounds of approximate diagonals in $\mc{A}\hten\mc{A}$. This notion is the operator module analogue of the classical concept introduced by Johnson \cite{John}, who showed that the group algebra $\LO$ of a locally compact group $G$ is (operator) amenable if and only if $G$ is amenable. In \cite{Ru3}, Ruan established the dual result, showing that the Fourier algebra $A(G)$ is operator amenable precisely when $G$ is amenable. Thus, $\LO$ is operator amenable if and only if $A(G)$ is operator amenable. To motivate our next result, we now recast this equivalence at the level of (non-relative) biflatness.

\begin{prop} Let $G$ be a locally compact group. Then $\LO$ is 1-biflat if and only if $G$ is amenable if and only if $A(G)$ is 1-biflat.
\end{prop}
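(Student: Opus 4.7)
The plan is to combine the self-duality of 1-biflatness established earlier in this section with Propositions \ref{p:rel+inj} and \ref{p:relative1-bimodule} to close a triangle of equivalences. Applying the self-duality to $\G = \G_a$ immediately yields that $L^1(G) = L^1(\G_a)$ is 1-biflat if and only if $A(G) = L^1(\G_s)$ is 1-biflat, so it suffices to establish one equivalence with amenability of $G$; I would argue one direction on each side.

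For the forward direction $G$ amenable $\Rightarrow$ $L^1(G)$ is 1-biflat, I would exploit that an amenable $G$ admits a two-sided invariant mean on $L^{\infty}(G)$; in the commutative setting $\G = \G_a$ the invariance condition (\ref{e:invstate}) in Proposition \ref{p:relative1-bimodule} reduces to ordinary translation invariance, since $L^{\infty}(\widehat{\G_a}') = \mc{R}(G)$ acts on $L^{\infty}(G) \subseteq \BLT$ by right translations. Proposition \ref{p:relative1-bimodule} then delivers relative 1-injectivity of $L^{\infty}(G)$ as an $L^1(G)$-bimodule. Since $L^{\infty}(G)$ is a commutative (hence injective) von Neumann algebra, it is 1-injective as an operator space. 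The bimodule version of Proposition \ref{p:rel+inj} promotes these two properties to full 1-injectivity of $L^{\infty}(G)$ in $L^1(G)\hskip2pt\mathbf{mod}\hskip2pt L^1(G)$, which is by definition 1-biflatness of $L^1(G)$; by self-duality $A(G)$ is then 1-biflat as well.

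For the converse, I would pass through the dual side: if $A(G)$ is 1-biflat, then $VN(G)$ is 1-injective as an $A(G)$-bimodule, and in particular relatively 1-injective. The co-commutative converse in Proposition \ref{p:relative1-bimodule} (specialised to $\G = \G_s$) then yields an invariant state $m \in VN(G)^*$ satisfying the translation-type invariance of (\ref{e:invstate}), and the argument invoked there via \cite[Proposition 3.2]{CT} identifies $m$ with a $G$-invariant mean, forcing $G$ to be amenable. Combined with self-duality this also handles 1-biflatness of $L^1(G)$.

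The principal obstacle is constant bookkeeping together with the precise identification of the $\mc{R}(G)$-action in the commutative case. One must check that the bimodule version of Proposition \ref{p:rel+inj} really does give constant $1$ when both ingredients are $1$, and that the relevant action in (\ref{e:invstate}) reduces to right translation on $L^{\infty}(G)$ (so that ordinary amenability suffices to produce the required mean). Both points are routine in spirit but worth verifying carefully.
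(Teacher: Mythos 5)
Your forward direction is essentially sound. For $\G=\G_a$ the invariance condition (\ref{e:invstate}) is indeed harmless: in fact the action of $\LOQHP$ on $x\in L^{\infty}(G)$ is trivial, $\hat{f}'\,\wh{\rhd}'\,x=\la\hat{f}',1\ra x$, because $V_a\in\mc{R}(G)\oten L^{\infty}(G)$ commutes with $1\ten x$ when $x$ lies in the maximal abelian algebra $L^{\infty}(G)$ --- so it is not right translation, but the upshot is the same: a right invariant mean coming from amenability of $G$ satisfies it, and Proposition \ref{p:rel+inj} is already stated for bimodules with constant $C_1C_2$, so the constant bookkeeping works out. This route through Proposition \ref{p:relative1-bimodule} differs from the paper, which simply quotes Johnson's theorem ($G$ amenable iff $L^1(G)$ amenable iff $L^1(G)$ relatively 1-biflat) on the convolution side and Ruan's theorem on the $A(G)$ side before upgrading with Proposition \ref{p:rel+inj}. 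Note also that the self-duality theorem is proved \emph{after} this proposition, which is stated as motivation for it; invoking it here is not mathematically circular (its proof does not use the proposition) but does reorder the section.

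The converse contains a genuine gap. From 1-biflatness of $A(G)$ you pass to relative 1-injectivity of $VN(G)$ in $A(G)\hskip2pt\mathbf{mod}\hskip2pt A(G)$ and then apply the co-commutative converse of Proposition \ref{p:relative1-bimodule}. But that converse only produces a state $m\in VN(G)^*$ which is a right invariant mean for the $A(G)$-action (such a state always exists, since $\G_s$ is always amenable) and which satisfies (\ref{e:invstate}); for $\G_s$ the latter is invariance under the right $L^1(G)$-action $x\lhd_a f$, which by the computation in Proposition \ref{p:equal} amounts to a conjugation-invariant state on $VN(G)$, i.e.\ \emph{inner} amenability of $G$ --- that is exactly what \cite[Proposition 3.2]{CT} characterizes, not amenability. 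Relative 1-biflatness of $A(G)$ does not imply amenability of $G$: every discrete group admits a contractive approximate indicator for the diagonal (the characteristic function of $G_\Delta$), so $A(\F_2)$ is relatively 1-biflat while $\F_2$ is not amenable. By weakening full injectivity to relative injectivity you have discarded precisely the strength you started with. The repair is either to keep the full (non-relative) 1-injectivity and restrict to the one-sided category, where 1-injectivity of $VN(G)$ in $A(G)\hskip2pt\mathbf{mod}$ is equivalent to amenability of $G$ by \cite[Corollary 5.3]{C} (this is the paper's argument), or to close the loop on the convolution side: 1-biflatness of $L^1(G)$ gives relative 1-biflatness, which together with the bounded approximate identity gives operator amenability of $L^1(G)$ and hence amenability of $G$ by Johnson's theorem.
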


\begin{proof} By \cite{John} $G$ is amenable if and only if $\LO$ is amenable if and only if $\LO$ is relatively 1-biflat. Since $\LI$ is a 1-injective operator space, Proposition \ref{p:rel+inj} entails the equivalence with 1-injectivity of $\LI$ in $\LO\hskip2pt\mathbf{mod}\hskip2pt\LO$. 

Dually, if $G$ were amenable, then $\LG$ is a 1-injective operator space, and it is relatively 1-injective in $A(G)\hskip2pt\mathbf{mod}\hskip2pt A(G)$ by (the proof of) \cite[Theorem 3.6]{Ru3}. Thus, Proposition \ref{p:rel+inj} entails the 1-injectivity of $VN(G)$ as an operator $A(G)$-bimodule. Conversely, if $VN(G)$ is 1-injective in $A(G)\hskip2pt\mathbf{mod}\hskip2pt A(G)$, then it is clearly 1-injective in $A(G)\hskip2pt\mathbf{mod}$, so $G$ is amenable by \cite[Corollary 5.3]{C}.
\end{proof}

We now show that 1-biflatness of quantum convolution algebras is a self dual property.

\begin{thm}\label{t:biflat} Let $\G$ be a locally compact quantum group. Then $\LOQ$ is 1-biflat if and only if $\LOQH$ is 1-biflat.
\end{thm}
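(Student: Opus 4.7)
By the inherent symmetry of the duality $\G\leftrightarrow\h{\G}$, it suffices to prove one implication; I would establish that $\LOQ$ being 1-biflat implies $\LOQH$ is 1-biflat. The overall plan is to decompose 1-biflatness into two self-dual constituents and transfer each between $\G$ and $\h{\G}$ through the fundamental unitary $W\in\LIQ\oten\LIQH$.

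The central decomposition I would establish is: $\LIQ$ is 1-injective as an $\LOQ$-bimodule if and only if $\LIQ$ is 1-injective as an operator space \emph{and} $\LIQ$ is relatively 1-injective as an $\LOQ$-bimodule. One direction is exactly Proposition \ref{p:rel+inj}. For the converse, relative 1-injectivity is automatic since it is a priori weaker than full bimodule injectivity, while operator-space 1-injectivity can be extracted by applying bimodule 1-injectivity to the inclusion $\LIQ\hookrightarrow\BLTQ$ (with $\BLTQ$ endowed with the canonical $\LOQ$-bimodule structure coming from $\Gam^r$ and $\Gam^l$): the resulting $\LOQ$-bimodule projection $\BLTQ\to\LIQ$ is, in particular, a completely contractive operator-space retraction from an ambient 1-injective space.

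Each condition is self-dual. For operator-space 1-injectivity: $\LIQ$ being 1-injective is equivalent to $\LIQ$ being an injective von Neumann algebra, and combining this with the Tomita--Takesaki identities $J\LIQ J=\LIQ'$ and $\h{J}\LIQH\h{J}=\LIQHP=\LIQH'$ allows one to transfer injectivity between $\LIQ$ and $\LIQH$ via the fact that injectivity passes to commutants. For the relative part: relative 1-biflatness of $\LOQ$ is captured by the existence of a completely contractive $\LOQ$-bimodule splitting $\Phi:\LIQ\oten\LIQ\to\LIQ$ of $\Gam$, in the spirit of Proposition \ref{p:relative1-bimodule}. Using the fundamental unitary and an averaging argument analogous to those in Propositions \ref{p:IA=>relinj} and \ref{p:relative1-bimodule}, one would construct a corresponding $\LOQH$-bimodule splitting $\h{\Phi}:\LIQH\oten\LIQH\to\LIQH$ of the dual coproduct. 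Reassembling the two transferred pieces via Proposition \ref{p:rel+inj} then yields that $\LIQH$ is 1-injective as an $\LOQH$-bimodule, i.e.\ $\LOQH$ is 1-biflat.

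The hardest part will be the explicit transfer of the bimodule splitting from $\Gam$ to the dual coproduct. This requires delicate use of the pentagonal relation, the intertwining identity $V=\sigma(1\ten U)W(1\ten U^*)\sigma$, the adjoint relations $(\h{J}\ten J)W(\h{J}\ten J)=W^*$ and $(J\ten\h{J})V(J\ten\h{J})=V^*$, and the unitary antipode $R$. Verifying that the $\LOQ$-bimodule property on the original side genuinely translates to the $\LOQH$-bimodule property on the dual side -- rather than to some twisted or shifted variant -- is where the real work lies.
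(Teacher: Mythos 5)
Your decomposition of bimodule 1-injectivity of $\LIQ$ into operator-space 1-injectivity plus relative bimodule 1-injectivity is correct and is indeed the skeleton of the paper's argument (Proposition \ref{p:rel+inj} in one direction, the retraction onto $\LIQ$ from $\BLTQ$ in the other). The trouble is that neither of your two transfer steps works as stated. First, operator-space 1-injectivity is \emph{not} self-dual, and the commutant argument cannot repair this: $\LIQ$ and $\LIQH$ are not commutants of one another; $J\LIQ J=\LIQ'$ is an (anti-)isomorphic copy of $\LIQ$, not of $\LIQH$. Concretely, for $\G=\G_a$ with $G$ non-amenable, $\LI$ is injective (it is commutative) while $\LIQH=VN(G)$ is not. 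In the paper the injectivity of $\LIQ$ is instead extracted from \emph{amenability} of $\h{\G}$: the full bimodule splitting $\Phi:\LIQH\oten\LIQH\to\LIQH$ of $\h{\Gam}$ restricts on $\LIQH\ten1$ to a right invariant mean, so $\h{\G}$ is amenable and $\LIQ$ is a 1-injective operator space by \cite[Theorem 3.3]{BT}.

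Second, the step you defer as ``the hardest part'' --- transferring a relative bimodule splitting of $\Gam$ to one of $\h{\Gam}$ --- is not merely delicate; it amounts to the self-duality of \emph{relative} 1-biflatness, which the paper does not claim and explicitly leaves open (and which, if available, would contradict the corollary on non-unimodular discrete quantum groups unless injectivity were also supplied). The paper's proof avoids this by exploiting the \emph{non-relative} hypothesis in an essential way: full bimodule 1-injectivity of $\LIQH$ yields a completely contractive $\LOQH$-bimodule projection $E:\BLTQ\to\LIQH$ defined on all of $\BLTQ$, not only on $\LIQH\oten\LIQH$. One then shows $E(\LIQ)\subseteq\LIQ\cap\LIQH=\C1$, conjugates by the extended unitary antipode to form $E_R=R\circ E\circ R$, and verifies via the adjoint relations and \eqref{e:R} that $E_R|_{\LIQ}$ is a state on $\LIQ$ which is simultaneously a right invariant mean and $\wh{\rhd}'$-invariant; Proposition \ref{p:relative1-bimodule} then delivers relative 1-injectivity of $\LIQ$ as an $\LOQ$-bimodule. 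Without a projection from the ambient $\BLTQ$ there is no visible way to manufacture such a state, so your averaging strategy would in any case have to invoke the full, not the relative, hypothesis. As written, the proposal therefore has two genuine gaps, one of them (the commutant transfer of injectivity) outright false.
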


\begin{proof} Clearly, it suffices to show one direction by Pontrjagin duality, so suppose that $\LOQH$ is 1-biflat, that is, $\LIQH$ is 1-injective in $\LOQH\hskip2pt\mathbf{mod}\hskip2pt\LOQH$. Consider the canonical $\LOQH$-bimodule structure on $\BLTQ$ given by
$$\hat{f}\wh{\rhd}T=(\id\ten \hat{f})\widehat{V}(T\ten 1)\h{V}^* \hs\hs\textnormal{and}\hs\hs T\wh{\lhd}\hat{f}=(\hat{f}\ten\id)\h{W}^*(1\ten T)\h{W},$$
for $\hat{f}\in\LOQH$ and $T\in\BLTQ$. Then by 1-injectivity, $\id_{\LIQH}$ extends to a completely contractive $\LOQH$-bimodule projection $E:\BLTQ\rightarrow\LIQH$. By the left $\wh{\rhd}$-module property, it follows from the standard argument that $E(\LIQ)\subseteq\LIQ\cap\LIQH=\C1$. Also, \cite[Theorem 4.9]{CN} implies that $E$ is a right $\lhd$-module map. Let $R$ be the extended unitary antipode of $\G$. Then
$$(R\ten R)(\h{V}')=(R\ten R)(\sigma V^*\sigma)=\Sigma(R\ten R)(V^*)=\Sigma(\h{J}\ten\h{J})(V)(\h{J}\ten\h{J})=\Sigma\h{W},$$
where the last equality follows from equation (\ref{e:WV}) and the adjoint relations of $W$ and $V$. Let $E_R:\BLTQ\rightarrow\LIQHP$ be the projection of norm one, $E_R=R\circ E\circ R$. Then for $\hat{f}'\in\LOQHP$ and $T\in\BLTQ$, we have
\begin{align*}E_R(\hat{f}'\wh{\rhd}'T)&=R(E(R((\id\ten\hat{f}')\h{V}'(T\ten1)\h{V}'^*)))\\
&=R(E((\id\ten\hat{f}'\circ R)(R\ten R)(\h{V}'(T\ten1)\h{V}'^*)))\\
&=R(E((\id\ten\hat{f}'\circ R)((R\ten R)(\h{V}'^*)(R(T)\ten1)(R\ten R)(\h{V}'))))\\
&=R(E((\id\ten\hat{f}'\circ R)((\Sigma\h{W}^*)(R(T)\ten1)(\Sigma\h{W}))))\\
&=R(E((\hat{f}'\circ R\ten\id)(\h{W}^*(1\ten R(T))\h{W})))\\
&=R(E(R(T)\wh{\lhd}(\hat{f}'\circ R)))\\
&=R(E(R(T))\wh{\lhd}(\hat{f}'\circ R))\\
&=\hat{f}'\wh{\rhd}'E_R(T).
\end{align*}
Thus, $E_R$ is a left $\wh{\rhd}'$-module map. Since $R(\LIQ)=\LIQ$, the restriction $E_R|_{\LIQ}$ defines a state $m\in\LIQ^*$ satisfying
$$\la m,\h{f}'\wh{\rhd}'x\ra=\la\hat{f}',1\ra\la m,x\ra, \ \ \ \hat{f}'\in\LOQHP, \ x\in\LIQ.$$
But $E$ was also a right $\lhd$-module map, which implies that $E_R$ is a left $\rhd$-module map by the generalized antipode relation (\ref{e:R}). Thus, we also have
$$\la m,f\rhd x\ra=\la f,1\ra\la m,x\ra, \ \ \ f\in\LOQ, \ x\in\LIQ,$$
meaning that $m$ is a right invariant mean on $\LIQ$. By Proposition \ref{p:relative1-bimodule} it follows that $\LIQ$ is relatively 1-injective in $\LOQ\hskip2pt\mathbf{mod}\hskip2pt\LOQ$.

By 1-injectivity of $\LIQH$ in $\LOQH\hskip2pt\mathbf{mod}\hskip2pt\LOQH$, there exists a completely contractive morphism $\Phi:\LIQH\oten\LIQH\rightarrow\LIQH$ which is a left inverse to $\h{\Gam}$. It follows that $\Phi|_{\LIQH\ten 1}$ defines a state $\h{m}\in\LIQH^*$ which is a right $\LOQH$-module map, i.e., $\h{\G}$ is amenable. Hence, $\LIQ$ is a 1-injective operator space by \cite[Theorem 3.3]{BT}. Proposition \ref{p:rel+inj} then implies the 1-injectivity of $\LIQ$ in $\LOQ\hskip2pt\mathbf{mod}\hskip2pt\LOQ$.
\end{proof}


\begin{cor} $\ell^1(\G)$ is not relatively 1-biflat for any non-unimodular discrete quantum group.\end{cor}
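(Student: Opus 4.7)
I will argue by contradiction: assume $\G$ is discrete non-unimodular and $\ell^1(\G)$ is relatively $1$-biflat. The strategy is to promote this to (non-relative) $1$-biflatness via the injective operator space structure of $\ell^{\infty}(\G)$, pass through the self-duality Theorem~\ref{t:biflat}, and then derive from the resulting bimodule projection a modular constraint on $\h{\G}$ that forces Kac type.

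Since $\G$ is discrete, $\ell^{\infty}(\G)\cong\bigoplus^{\ell^{\infty}}_{\alpha\in\Irr} M_{n_\alpha}(\C)$ is an atomic type~I, hence injective, von Neumann algebra, and in particular is $1$-injective as an operator space. Relative $1$-biflatness of $\ell^1(\G)$ is precisely the statement that $\ell^{\infty}(\G)$ is relatively $1$-injective in $\ell^1(\G)\hskip2pt\mathbf{mod}\hskip2pt\ell^1(\G)$; Proposition~\ref{p:rel+inj} applied with $C_1=C_2=1$ then upgrades this to $1$-injectivity in the bimodule category, so $\ell^1(\G)$ is $1$-biflat. By Theorem~\ref{t:biflat}, $L^1(\h{\G})$ is $1$-biflat, meaning $L^{\infty}(\h{\G})$ is $1$-injective in $L^1(\h{\G})\hskip2pt\mathbf{mod}\hskip2pt L^1(\h{\G})$. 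Replaying the proof of Theorem~\ref{t:biflat} with $\G$ and $\h{\G}$ interchanged, this bimodule $1$-injectivity yields a completely contractive $L^1(\h{\G})$-bimodule projection $E:\mc{B}(L^2(\h{\G}))\to L^{\infty}(\h{\G})$; its restriction to $\ell^{\infty}(\G)$ takes values in $\ell^{\infty}(\G)\cap L^{\infty}(\h{\G})=\C 1$ and delivers a state $m\in\ell^{\infty}(\G)^*$ that is simultaneously a right invariant mean and satisfies the $\wh{\rhd}'$-invariance \eqref{e:invstate}. The $\wh{\rhd}'$-action of $L^1(\h{\G}')$ is implemented by conjugation through $\h{V}'$; pairing $\hat{f}'$ with the one-parameter family $\h{V}'^{it}$ converts \eqref{e:invstate} into invariance of $m$ under the scaling automorphism group $\tau_t$ of $\G$. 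On each block $M_{n_\alpha}\subseteq\ell^{\infty}(\G)$, $\tau_t$ acts as conjugation by the Woronowicz $F$-matrix $F_\alpha^{it}$, and combining this invariance with the right $\star$-invariance of $m$ forces $F_\alpha=I$ for every $\alpha$, i.e., $\G$ is of Kac type, contradicting the hypothesis.

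\textbf{Main obstacle.} The principal technical point is the last step: rigorously identifying the abstract $\wh{\rhd}'$-action with the scaling-group action, and extracting $F_\alpha=I$ from the invariance of a merely continuous (not necessarily normal) state. Since the modular element for non-unimodular discrete $\G$ is unbounded, integrating against $\hat{f}'$ corresponding to $\h{V}'^{it}$ requires passing through the KMS/analytic generator framework. A complementary observation that may simplify matters is that for non-Kac compact $\h{\G}$, the image $\Gam(L^{\infty}(\h{\G}))$ fails to be invariant under $\sigma^{h\otimes h}$, so Takesaki's theorem excludes any normal $(h\otimes h)$-preserving conditional expectation onto it; the $1$-contractivity hypothesis in our setting is precisely what must be leveraged to rule out even the non-normal case.
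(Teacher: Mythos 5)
The first half of your argument coincides exactly with the paper's proof: $1$-injectivity of the operator space $\ell^\infty(\G)$ together with Proposition \ref{p:rel+inj} upgrades relative $1$-biflatness of $\ell^1(\G)$ to genuine $1$-biflatness, and Theorem \ref{t:biflat} then transfers this to $L^1(\h{\G})$. The divergence, and the gap, is in the endgame. At this point the paper simply invokes \cite[Theorem 1.1]{CLR}, which states precisely that relative operator biflatness of $L^1$ of a compact quantum group forces it to be of Kac type; since $\h{\G}$ is compact, this immediately yields unimodularity of $\G$ and the contradiction. You instead attempt to re-derive this fact from the bimodule projection, and your sketch does not close.

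Concretely: (a) the expression $\h{V}'^{it}$ is not a meaningful one-parameter family in this context --- the scaling group is implemented by the positive operator $\h{P}^{it}$, not by complex powers of a multiplicative unitary, so the proposed conversion of the $\wh{\rhd}'$-invariance of $m$ into scaling-group invariance is not actually set up; (b) even granting that $m$ is invariant under conjugation by $F_\alpha^{it}$ on each block $M_{n_\alpha}$, this alone cannot force $F_\alpha=I$: the normalized trace on $M_{n_\alpha}$ is invariant under conjugation by \emph{every} unitary, so invariance of a single state under a conjugation action carries no information about the conjugating operator. The actual argument must exploit the full bimodule property of the projection against matrix coefficients $u^\alpha_{ij}$, and the possible non-normality of the state is, as you yourself flag, the genuinely hard point. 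Since you explicitly defer this step as the ``main obstacle,'' the proof is incomplete exactly where the paper outsources the work to the cited theorem of Caspers--Lee--Ricard. Either carry out that modular analysis in full or cite \cite{CLR} as the paper does.
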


\begin{proof} Since $\ell^\infty(\G)$ is always a 1-injective operator space for any disrete quantum group $\G$, if $\ell^1(\G)$ were relatively 1-biflat, then by Proposition \ref{p:rel+inj}, $\ell^\infty(\G)$ would be 1-injective in $\ell^1(\G)\hskip2pt\mathbf{mod}\hskip2pt\ell^1(\G)$. Then by Theorem \ref{t:biflat} $\LIQH$ would be 1-injective in $\LOQH\hskip2pt\mathbf{mod}\hskip2pt\LOQH$. But then, \cite[Theorem 1.1]{CLR} would entail that $\h{\G}$ is a compact Kac algebra, and therefore $\G$ is unimodular.\end{proof}

The relative biprojectivity of $\LOQ$, that is, relative projectivity of $\LOQ$ as an operator bimodule over itself, has been completely characterized: $\LOQ$ is relatively $C$-biprojective if and only if $\LOQ$ is relatively  1-biprojective if and only if $\G$ is a compact Kac algebra \cite{Ar,D3,CLR}. The corresponding characterization for (relative) $C$-biflatness remains an interesting open question. In the co-commutative setting, the relative 1-biflatness of $A(G)$ has been studied in \cite{ARS,CT,RX}. It is known to be equivalent to the existence of a contractive approximate indicator for the diagonal subgroup $G_\Delta$ \cite[Theorem 4.1]{CT}. The authors in \cite{CT} conjecture that it is equivalent to the QSIN property of $G$.

We finish this subsection with a generalization of \cite[Theorem 4.9]{KN} beyond co-amenable quantum groups, which at the same time characterizes the (non-relative) 1-biprojectivity of $\LOQ$.

\begin{thm}\label{t:birpojectivity} Let $\G$ be a locally compact quantum group. Then the following conditions are equivalent:
\begin{enumerate}[label=(\roman*)]
\item $\G$ is finite--dimensional.
\item $\mc{T}_\rhd$ is relatively 1-biprojective;
\item $\LOQ$ is 1-biprojective;
\end{enumerate}
\end{thm}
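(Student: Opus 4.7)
\emph{Proof plan.} The easy direction is $(i)\Rightarrow(ii),(iii)$: when $\G$ is finite-dimensional, $\G$ is automatically a finite Kac algebra, and the Haar state $\vphi$ together with the antipode yields explicit bimodule diagonals of cb-norm one---generalizing the classical finite-group formula $d=\frac{1}{|G|}\sum_{s\in G}\delta_s\otimes\delta_{s^{-1}}$---in both $\LOQ\hten\LOQ$ and $\mc{T}_\rhd\hten\mc{T}_\rhd$, witnessing relative and non-relative $1$-biprojectivity simultaneously. This is essentially bookkeeping with the finite-dimensional Hopf--Kac structure.

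For $(iii)\Rightarrow(i)$: non-relative $1$-biprojectivity of $\LOQ$ a fortiori gives relative $1$-biprojectivity, so by the cited results of \cite{Ar,D3,CLR} the quantum group $\G$ is compact Kac. For a compact Kac algebra, $\LOQ\cong\bigoplus_{1}\{T_{n_\alpha}(\C)\mid\alpha\in\Irr\}$ admits a bounded approximate identity (the central projections onto finite subsums of irreducible blocks), so $\G$ is coamenable. Now \cite[Theorem 4.9]{KN}, which characterizes finite-dimensionality among coamenable quantum groups via biprojectivity, applies and forces $\G$ to be finite-dimensional.

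The implication $(ii)\Rightarrow(i)$ proceeds by the same strategy with one extra step: the complete quotient algebra homomorphism $\pi:\mc{T}_\rhd\twoheadrightarrow\LOQ$ has kernel $\LIQ_\perp$, a closed two-sided ideal of $\mc{T}_\rhd$, so given a relative bimodule splitting $\Phi:\mc{T}_\rhd\to\mc{T}_\rhd\hten\mc{T}_\rhd$ of $\rhd$ I would form $(\pi\hten\pi)\circ\Phi$ and use the self-inducedness of $\LOQ$ to descend it to a completely contractive $\LOQ$-bimodule splitting of $\star$. This reduces the problem to $\LOQ$ being relatively $1$-biprojective, after which the chain in the preceding paragraph closes the argument. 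The main obstacle I anticipate is this push-down step: verifying that $(\pi\hten\pi)\circ\Phi$ is compatible with $\pi$ in a manner that yields a genuine $\LOQ$-bimodule splitting with cb-norm at most one, as opposed to merely a bimodule map on $\mc{T}_\rhd$ valued in $\LOQ\hten\LOQ$. Once this compatibility is established, the reduction to finite-dimensionality is cleanly packaged by the cited literature.
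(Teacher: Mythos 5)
Your overall architecture ($(i)$ implies both $(ii)$ and $(iii)$, then each back to $(i)$) is legitimate, and invoking \cite{Ar,D3,CLR} to get ``compact Kac'' from relative $1$-biprojectivity of $\LOQ$ is consistent with the paper. But both of your downward implications break at the same place: the step ``compact Kac $\Rightarrow$ co-amenable'' is false. Co-amenability of a compact quantum group is equivalent to amenability of its discrete dual, and $O_N^+$ ($N\geq 3$) is a compact Kac algebra whose dual is not amenable. The identification $\LOQ\cong\bigoplus_1\{T_{n_\alpha}(\C)\mid\alpha\in\Irr\}$ that you use to manufacture a bounded approximate identity is the structure of $\ell^1(\h{\G})$ for the \emph{discrete dual}, not of $L^1(\G)$ for compact $\G$; the predual of $\LIQ$ for compact $\G$ has no such block decomposition and no a priori bounded approximate identity. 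So you cannot reach the co-amenable setting in which \cite[Theorem 4.9]{KN} applies, and your chain from ``compact Kac'' to ``finite-dimensional'' does not close. The paper extracts strictly more from the hypothesis: $1$-biprojectivity of $\LOQ$ dualizes to a \emph{normal} completely bounded $\LOQ$-bimodule left inverse $\Phi:\LIQ\oten\LIQ\rightarrow\LIQ$ to $\Gam$; the restriction $\Phi|_{\LIQ\ten 1}$ is a normal right-invariant functional, which forces $\G$ to be compact, and then regularity of compact quantum groups allows one to run the argument of \cite[Theorem 5.14]{C} to deduce discreteness, so $\G$ is simultaneously compact and discrete and hence finite-dimensional by \cite[Theorem 4.8]{KN}. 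The normality of the splitting, which your plan never uses, is the essential extra input.

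Your reduction for $(ii)\Rightarrow(i)$ is also too lossy even if the above were repaired: pushing a relative splitting of $\mc{T}_\rhd$ down through $\pi\hten\pi$ can at best yield relative $1$-biprojectivity of $\LOQ$, and that property is \emph{exactly equivalent} to ``compact Kac'' --- which is satisfied by infinite-dimensional examples such as $O_N^+$ --- so no argument remembering only relative biprojectivity of $\LOQ$ can conclude finite-dimensionality. The paper's route $(ii)\Rightarrow(iii)$ retains the non-relative information carried by $\mc{T}_\rhd$: following \cite[Theorem 5.14]{C} it produces both the relative $1$-biprojectivity of $\LOQ$ and the $1$-projectivity of $\LOQ$ as an operator space, and the bimodule analogue of \cite[Proposition 2.2]{C} combines these into genuine $1$-biprojectivity. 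Relatedly, for $(i)\Rightarrow(iii)$ an explicit diagonal in $\LOQ\hten\LOQ$ only witnesses the relative notion; you still need the operator-space $1$-projectivity of the finite-dimensional $\LOQ$ to upgrade it, though that part is indeed routine.
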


\begin{proof} $(i)\Rightarrow(ii)$ follows from \cite[Theorem 4.9]{KN}.

$(ii)\Rightarrow(iii)$ follows similarly to the proof of \cite[Theorem 5.14]{C}, giving the relative 1-biprojectivity of $\LOQ$ together with the 1-projectivity of $\LOQ$ as an operator space. The bimodule analogue of \cite[Proposition 2.2]{C} then yields $(iii)$.

$(iii)\Rightarrow(i)$ The 1-biprojectivity of $\LOQ$ ensures the existence of a normal completely bounded $\LOQ$-bimodule left inverse $\Phi:\LIQ\oten\LIQ\rightarrow\LIQ$ to $\Gam$. As usual, the restriction $\Phi|_{\LIQ\ten1}:\LIQ\rightarrow\LIQ$ maps into $\C$, and, moreover, it is a right $\LOQ$-module map, so $\G$ is compact by normality of $\Phi$. Since compact quantum groups are regular, we may repeat the proof of $(iii)\Rightarrow(i)$ from \cite[Theorem 5.14]{C} to deduce the discreteness of $\G$. Thus, $\G$ is finite-dimensional by \cite[Theorem 4.8]{KN}.
\end{proof}

\section{Operator Amenability of $L^1_{cb}(\G)$}

For a locally compact quantum group $\G$, let $L^1_{cb}(\G)$ denote the closure of $\LOQ$ inside $\McbQl$. Recall that $\h{\G}$ is weakly amenable precisely when $L^1_{cb}(\G)$ has a bounded approximate identity. In analogy to Ruan's result -- equating amenability of a locally compact group $G$ to operator amenability of $A(G)$ -- it was suggested in \cite{FRS} that $A_{cb}(G)$
may be operator amenable exactly when $G$ is weakly amenable. In \cite{CT} the authors gave examples of weakly amenable connected groups (e.g. $G=SL(2,\R)$) for which $A_{cb}(G)$ is not operator amenable. We now relate weak amenability of $\h{\G}$ to operator amenability of $L^1_{cb}(\G)$ for unimodular discrete quantum groups with Kirchberg's factorization property in the sense of \cite{BW}. 

Let $\G$ be a compact Kac algebra and let $\vphi$ and $R$ denote the Haar trace and unitary antipode on $C(\G)$, as well as their universal extensions to $C_u(\G)$. As in \cite{BW}, we define $*$-homomorphisms $\lm,\rho:C_u(\G)\rightarrow\BLTQ$ by
$$\lm(x)\Lphi(y)=\Lphi(xy), \ \ \ \rho(x)\Lphi(y)=\Lphi(y R(x)), \ \ \ x,y\in C_u(\G).$$
Since $\lm$ and $\rho$ have commuting ranges, we obtain a canonical representation $\lm\times\rho:C_u(\G)\ten_{\max}C_u(\G)\rightarrow\BLTQ$. The unimodular discrete dual $\h{\G}$ is said to have \textit{Kirchberg's factorization property} if $\lm\times\rho$ factors through $C_u(\G)\ten_{\min}C_u(\G)$. When $\G=\G_s$ is co-commutative, this notion coincides with Kirchberg's factorization property for the underlying discrete group $G$.



\begin{lem}\label{l:approx} Let $\A$ be a $C^*$-algebra. There exists a complete isometry $\iota:\A^*\hten\A^*\hookrightarrow(\A\ten_{\min}\A)^*$ such that $\iota(\A^*\hten\A^*_{\norm{\cdot}\leq1})$ is weak* dense in $(\A\ten_{\min}\A)^*_{\norm{\cdot}\leq1}$.\end{lem}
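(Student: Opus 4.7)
The plan is to take $\iota$ to be the restriction map arising from the canonical, completely isometric, weak*-dense inclusion
\[j:\A\ten_{\min}\A\hookrightarrow\A^{**}\oten\A^{**},\]
combined with the Effros--Ruan identification $(\A^{**}\oten\A^{**})_*\cong\A^*\hten\A^*$ completely isometrically. Explicitly, each $F\in\A^*\hten\A^*$ is a normal functional on $\A^{**}\oten\A^{**}$, and we set $\iota(F)(x):=\la F,j(x)\ra$ for $x\in\A\ten_{\min}\A$. Complete contractivity of $\iota$ is automatic from complete isometry of $j$.

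To upgrade $\iota$ to a complete isometry I would work matricially. Under the identifications $M_n(\A^*\hten\A^*)\cong\mc{CB}^{\sigma}(\A^{**}\oten\A^{**},M_n)$ and $M_n((\A\ten_{\min}\A)^*)\cong\mc{CB}(\A\ten_{\min}\A,M_n)$, what needs proving is that precomposition with $j$ preserves the cb-norm of a normal map $\Phi:\A^{**}\oten\A^{**}\to M_n$. This follows from matricial Kaplansky density: for every $k$, the closed unit ball of $M_k(\A\ten_{\min}\A)$ is weak*-dense in the closed unit ball of $M_k(\A^{**}\oten\A^{**})$, and $\Phi^{(k)}$ is weak*-continuous, so the suprema defining $\norm{\Phi^{(k)}}$ and $\norm{(\Phi\circ j)^{(k)}}$ coincide.

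For the weak*-density of $\iota$ applied to the unit ball, I would combine Hahn--Banach with Goldstine's theorem. Given $\varphi$ in the unit ball of $(\A\ten_{\min}\A)^*$ and a basic $\sigma((\A\ten_{\min}\A)^*,\A\ten_{\min}\A)$-neighbourhood of $\varphi$ determined by $x_1,\ldots,x_n\in\A\ten_{\min}\A$ and $\ep>0$, Hahn--Banach extends $\varphi$ to $\tilde\varphi\in(\A^{**}\oten\A^{**})^*=(\A^*\hten\A^*)^{**}$ of norm at most $1$. Goldstine then yields a net $(F_\al)$ in the unit ball of $\A^*\hten\A^*$ with $F_\al\to\tilde\varphi$ in the $\sigma((\A^*\hten\A^*)^{**},\A^*\hten\A^*)$-topology; evaluating at $j(x_i)$ gives $\iota(F_\al)(x_i)=F_\al(j(x_i))\to\tilde\varphi(j(x_i))=\varphi(x_i)$, so eventually $\iota(F_\al)$ lies in the chosen neighbourhood.

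The key technical inputs, and the likely obstacle to a fully clean write-up, are the Effros--Ruan identification of the predual of a spatial von Neumann tensor product with the operator space projective tensor product of preduals, together with matricial Kaplansky density on $\A^{**}\oten\A^{**}$; once these are in hand, both the complete-isometry claim and the unit-ball weak*-density claim reduce to standard duality and Goldstine manipulations.
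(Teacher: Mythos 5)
Your proposal is correct and follows essentially the same route as the paper: the map $\iota$ is exactly the pre-adjoint of the normal surjection $(\A\ten_{\min}\A)^{**}\twoheadrightarrow\A^{**}\oten\A^{**}=(\A^*\hten\A^*)^*$ covering $\pi_\A\ten\pi_\A$, and the unit-ball density argument via Hahn--Banach plus Goldstine is the one given there. The only (harmless) divergence is that you justify the complete isometry by matricial Kaplansky density, whereas the paper simply invokes that the pre-adjoint of a normal surjective $*$-homomorphism is completely isometric.
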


\begin{proof} Let $\pi_\A:\A\rightarrow \A^{**}$ denote the universal representation of $\A$. Then the universal cover of the representation $\pi_\A\ten\pi_\A:\A\ten_{\min}\A\rightarrow\A^{**}\ten_{\min}\A^{**}$ is a normal surjective $*$-homomorphism $\pi$ of $(\A\ten_{\min}\A)^{**}$ onto 
$$(\pi_\A\ten\pi_\A)(\A\ten_{\min}\A)''=\pi_\A(\A)''\oten\pi_\A(\A)''=\A^{**}\oten\A^{**}=(\A^*\hten\A^*)^*$$
(see \cite[Proposition IV.4.13]{T2}). Its pre-adjoint $\iota:=\pi_*:\A^*\hten\A^*\hookrightarrow(\A\ten_{\min}\A)^*$ is a complete isometry. 

Now, Let $F\in(\A\ten_{\min}\A)^*$, $\norm{F}\leq1$. Since $\pi_\A\ten\pi_\A:\A\ten_{\min}\A\rightarrow\A^{**}\ten_{\min}\A^{**}\subseteq \A^{**}\oten\A^{**}$ 
is a complete isometry, we may take a norm preserving Hahn--Banach extension $\tilde{F}\in(\A^{**}\oten\A^{**})^*=(\A^*\hten\A^*)^{**}$ satisfying
$$\la\tilde{F},\pi_\A\ten\pi_\A(A)\ra=\la F,A\ra, \ \ \ A\in\A\ten_{\min}\A.$$
By Goldstine's theorem, there exists a net $(f_j)$ in $(\A^*\hten\A^*)_{\norm{\cdot}\leq 1}$ such that $f_j\rightarrow\tilde{F}$ weak*. Thus, for all $A\in\A\ten_{\min}\A$,
$$\la F,A\ra=\la\tilde{F},\pi_\A\ten\pi_\A(A)\ra=\lim_j\la f_j,\pi_\A\ten\pi_\A(A)\ra=\lim_j\la \iota(f_j),A\ra.$$
\end{proof}

\begin{thm}\label{t:OA} Let $\h{\G}$ be a unimodular discrete quantum group with Kirchberg's factorization property. Then $\h{\G}$ is weakly amenable if and only if $L^1_{cb}(\G)$ is operator amenable. Moreover, $\Lambda_{cb}(\h{\G})\leq OA(L^1_{cb}(\G))\leq \Lambda_{cb}(\h{\G})^2$.
\end{thm}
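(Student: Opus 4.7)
For the easy direction, if $L^1_{cb}(\G)$ is operator amenable with constant $C=OA(L^1_{cb}(\G))$, then by the operator version of Johnson's theorem it carries a bounded approximate identity of norm at most $C$. Approximating these elements in norm by elements of the dense subalgebra $\LOQ\subset L^1_{cb}(\G)$ yields, for each $\ep>0$, an approximate identity in $\LOQ$ bounded by $C+\ep$ in $\McbQl$, which gives $\Lambda_{cb}(\h{\G})\le OA(L^1_{cb}(\G))$.

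For the converse, fix a BAI $(\hat f_i)\subset\LOQ$ with $\sup_i\|\hat f_i\|_{\McbQl}\le\Lambda:=\Lambda_{cb}(\h{\G})$; the plan is to build a bounded approximate diagonal in $L^1_{cb}(\G)\hten L^1_{cb}(\G)$ of norm at most $\Lambda^2$. First I construct a ``quantum diagonal'' functional exploiting the factorization property. Let $\xi_0=\Lphi(1)\in\LTQ$. Kirchberg's factorization property ensures that $\lm\times\rho$ descends to a contractive $*$-homomorphism $\pi:C_u(\G)\ten_{\min}C_u(\G)\to\BLTQ$, and I set $\tilde d:=\om_{\xi_0}\circ\pi\in(C_u(\G)\ten_{\min}C_u(\G))^*$, so that $\|\tilde d\|\le 1$ and $\tilde d(x\ten y)=\vphi(xR(y))$. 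The crucial ``diagonality'' is the identity $\tilde d\circ\Gam=\epsilon$ (where $\epsilon$ is the counit of $C_u(\G)$), which follows from the antipode axiom $m\circ(\id\ten R)\circ\Gam=\epsilon(\cdot)1$ together with $\vphi(1)=1$.

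Next, by Lemma \ref{l:approx} I pick a net $(g_j)\subset C_u(\G)^*\hten C_u(\G)^*$ with $\|g_j\|\le 1$ and $\iota(g_j)\to\tilde d$ weak$^*$ in $(C_u(\G)\ten_{\min}C_u(\G))^*$. To push these into the convolution algebra, I use that $\LOQ$ is a norm-closed ideal in $C_u(\G)^*$, so the left convolution $\hat f_i\star(\cdot):C_u(\G)^*\to\LOQ$ is well-defined; moreover, the factorization $m^l_{\hat f_i\star\mu}=m^l_{\hat f_i}\circ(\mu\star\cdot)$ shows that, composed with $\LOQ\hookrightarrow L^1_{cb}(\G)$, this map has cb-norm at most $\Lambda$. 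Consequently
\[ D_{ij}:=\bigl((\hat f_i\star\cdot)\ten(\hat f_i\star\cdot)\bigr)(g_j)\in\LOQ\hten\LOQ\subset L^1_{cb}(\G)\hten L^1_{cb}(\G) \]
satisfies $\|D_{ij}\|_{L^1_{cb}(\G)\hten L^1_{cb}(\G)}\le\Lambda^2$, which is the desired quadratic bound.

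To finish, I would verify the approximate diagonal axioms. Pairing $m(D_{ij})$ with $x\in\LIQ$ gives $\la m(D_{ij}),x\ra=\la g_j,\bigl((\hat f_i\star\cdot)\ten(\hat f_i\star\cdot)\bigr)\Gam(x)\ra$, and letting $j\to\infty$ reduces, via $\tilde d\circ\Gam=\epsilon$ and the counit property, to a quantity identifiable with the action of $\hat f_i$; combined with the BAI property of $(\hat f_i)$ and a convexity/Mazur argument to collapse the iterated limit in $(i,j)$ into a single directed net, this produces $m(D_\alpha)\cdot a\to a$ for all $a\in L^1_{cb}(\G)$. The bimodule condition $a\cdot D_{ij}-D_{ij}\cdot a\to 0$ follows from a parallel calculation that exploits the coproduct symmetry of $\tilde d$ encoded in the antipode identities. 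The main obstacle is exactly this last verification: the $g_j$ converge only weak$^*$, so the diagonal axioms must be checked against each fixed test element and then the $(i,j)$-indices combined via an iterated limit plus convexity; morally, Kirchberg's factorization property plays two roles, ensuring that $\tilde d$ exists as a bounded functional on the minimal tensor product (so Lemma \ref{l:approx} applies) and keeping $\|g_j\|\le 1$, which together yield the bound $\Lambda^2$ rather than something larger.
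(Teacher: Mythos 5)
Your skeleton is the paper's: Kirchberg's factorization property yields the state $\mu=\om_{\Lphi(1)}\circ(\lm\times\rho)$ on $C_u(\G)\ten_{\min}C_u(\G)$, Lemma \ref{l:approx} approximates it by states $g_j\in C_u(\G)^*\hten C_u(\G)^*$, and cutting down by a cb-bounded approximate identity gives elements of $L^1_{cb}(\G)\hten L^1_{cb}(\G)$ of norm at most $\Lambda_{cb}(\h{\G})^2$. But the verification of the approximate diagonal axioms, which you defer, is where the real work lies, and your candidate $D_{ij}=(\hat f_i\star\cdot\ten\hat f_i\star\cdot)(g_j)$ does not satisfy them. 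Granting the asymptotic relation $g_j\star(1\ten a)-(a\ten 1)\star g_j\to 0$, one computes
\begin{equation*}
a\cdot D_{ij}-D_{ij}\cdot a\approx\bigl((a\star\hat f_i-\hat f_i\star a)\ten\hat f_i\bigr)\star g_j,
\end{equation*}
and similarly $m(D_{ij})$ differs from $\hat f_i\star\hat f_i\star m(g_j)$ by commutators of $\hat f_i$ with the legs of $g_j$. Since $\LOQ$ is noncommutative for a genuinely quantum compact $\G$, these terms do not vanish for a generic bounded approximate identity. The paper's proof avoids this by invoking \cite[Theorem 5.15]{KR}: weak amenability of a unimodular discrete quantum group produces a cb-bounded approximate identity $(f_j)$ lying in $\mc{Z}(\LOQ)$, and the cut-down is taken in the asymmetric form $(1\ten f_j)\star\mu_i\star(f_j\ten 1)$. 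Both the centrality and this particular placement are essential inputs absent from your argument.

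Two further gaps. First, the identity $\Gam_u^*(\mu)=\epsilon$ is not what drives the bimodule axiom; the crux is the pair of relations $(f\ten 1)\star\mu=\mu\star(1\ten f)$ and $(1\ten f)\star\mu=\mu\star(f\ten 1)$, hence $F\star\mu=\mu\star\Sigma F$, which the paper derives from the strong invariance property $R((\id\ten\vphi)(\Gam_u(x)(1\ten y)))=(\id\ten\vphi)((1\ten x)\Gam_u(R(y)))$ together with $\mu=\mu\circ\Sigma$. You gesture at this as ``a parallel calculation,'' but without it nothing forces the commutator terms above to appear in a cancellable form. Second, your proposed Mazur/convexity upgrade from weak$^*$ to norm convergence presupposes that the relevant differences converge \emph{weakly} in $\LOQ\hten\LOQ$, which is itself unclear because $g_j\to\mu$ only against elements of $C_u(\G)\ten_{\min}C_u(\G)$; the paper instead invokes \cite[Theorem 4.6]{RV}, a Raikov-type theorem asserting that weak$^*$ convergence of states to a state already yields \emph{norm} convergence of the induced convolution operators on $\LOQ$. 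So while the strategy is the right one, the three ingredients that make it close --- the central cb-bounded approximate identity, the commutation relations for $\mu$, and the norm upgrade --- are missing or unproven.
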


\begin{proof} It is clear that $\Lambda_{cb}(\h{\G})\leq OA(L^1_{cb}(\G))$, as any operator amenable Banach algebra admits a bounded approximate identity with pertinent control over the norm \cite[Proposition 2.3]{Ru3}.

Conversely, by Kirchberg's factorization property the representation $\lm\times\rho$ factors through $C_u(\G)\ten_{\min}C_u(\G)$. Composing with $\om_{\Lphi(1)}$, we obtain a state $\mu:=\om_{\Lphi(1)}\circ\lm\times\rho\in(C_u(\G)\ten_{\min}C_u(\G))^*$. By $R$-invariance of $\vphi$, one can easily verify that $\mu=\mu\circ\Sigma$. Let $\Gam_u:C_u(\G)\rightarrow C_u(\G)\ten_{\min} C_u(\G)$ denote the universal co-multiplication. Then, similar to the calculations in \cite[Theorem 3.3]{Ru4}, for all $u\in\Irr$, $1\leq i,j\leq n_u$,
\begin{align*}\la\Gam_u^*(\mu)\star f,u_{ij}\ra&=\sum_{k,l=1}^{n_u}\la f,u_{lj}\ra\la\mu,u_{ik}\ten u_{kl}\ra=\sum_{k,l=1}^{n_u}\la f,u_{lj}\ra\vphi(u_{ik} R(u_{kl}))\\
&=\sum_{k,l=1}^{n_u}\la f,u_{lj}\ra\vphi(u_{ik} u_{lk}^{*})=\frac{1}{n_u}\sum_{k=1}^{n_u}\la f,u_{ij}\ra\\
&=\la f,u_{ij}\ra.\end{align*}
Hence, $\Gam_u^*(\mu)\star f = f$ for all $f\in \LOQ$. By \cite[Proposition 9.5]{K}, $R((\id\ten \vphi)(\Gam_u(x^*)(1\ten y)))=(\id\ten\vphi)((1\ten x^*)\Gam_u(y))$ for all $x,y\in C_u(\G)$, so that
\begin{align*}\la (f\ten 1)\star\mu,x\ten y\ra&=\vphi((x\star f) R(y))=(f\ten \vphi)(\Gam_u(x)(1\ten R(y)))\\
&=f\circ R(R((\id\ten\vphi)(\Gam_u(x)(1\ten R(y)))))\\
&=f\circ R((\id\ten\vphi)((1\ten x)\Gam_u(R(y))))\\
&=(f\circ R\ten\vphi)((1\ten x)\Sigma\circ R\ten R\circ\Gam_u(y))\\
&=(\vphi\ten f)(\Gam_u(y)(R(x)\ten 1))\\
&=\vphi((f\star y)R(x))\\
&=\la\mu\star(1\ten f),x\ten y\ra.\end{align*}
Thus, $(f\ten 1)\star\mu=\mu\star(1\ten f)$ for all $f\in\LOQ$. Since $\mu=\mu\circ\Sigma$, we also have $(1\ten f)\star\mu=\mu\star(f\ten 1)$, for all $f\in\LOQ$. It follows that $F\star\mu=\mu\star\Sigma F$ for all $F\in\LOQ\hten\LOQ$, where we let $\Sigma$ also denote the flip homomorphism on $\LOQ\hten\LOQ$.

The proof of Lemma \ref{l:approx} implies the existence of a net $(\mu_i)$ of states in $C_u(\G)^*\hten C_u(\G)^*$ such that $\mu_i\rightarrow\mu$ weak* in $(C_u(\G)\ten_{\min}C_u(\G))^*$, and hence in $(C_u(\G)\ten_{\max}C_u(\G))^*=C_u(\G\times\G)^*$. Since $m_{C_u(\G)^*}=\Gam_u^*|_{C_u(\G)^*\hten C_u(\G)^*}$, it follows that $m_{C_u(\G)^*}(\mu_i)\rightarrow \Gam_u^*(\mu)$ weak* in $C_u(\G)^*$. By 
\cite[Theorem 4.6]{RV} (note that $m_{C_u(\G)^*}(\mu_i),\Gam_u^*(\mu)$ are states), we have 
$$\norm{m_{C_u(\G)^*}(\mu_i)\star f-f}_{\LOQ}=\norm{m_{C_u(\G)^*}(\mu_i)\star f-\Gam_u^*(\mu)\star f}_{\LOQ}\rightarrow0, \ \ \ f\in\LOQ.$$
Since $\mu_i\rightarrow\mu$ weak* in $C_u(\G\times\G)^*$, again by \cite[Theorem 4.6]{RV} we obtain
$$\norm{F\star\mu_i-\mu_i\star\Sigma F}_{\LOQ\hten\LOQ}\rightarrow 0, \ \ \ F\in\LOQ\hten\LOQ,$$
from which we have
\begin{align*}&\norm{F\star((1\ten f)\star\mu_i-\mu_i\star(f\ten 1))}=
\norm{(F\star(1\ten f))\star\mu_i-F\star\mu_i\star(f\ten 1)}\\
&\leq\norm{(F\star(1\ten f))\star\mu_i-(\mu_i\star\Sigma F)\star(f\ten 1)}+\norm{(\mu_i\star\Sigma F)\star(f\ten 1)-F\star\mu_i\star(f\ten 1)}\\
&\leq\norm{(F\star(1\ten f))\star\mu_i-\mu_i\star(\Sigma (F\star(1\ten f)))}+\norm{\mu_i\star\Sigma F-F\star\mu_i}\norm{f\ten 1}\\
&\rightarrow0\end{align*}
for every $f\in\LOQ$, $F\in\LOQ\hten\LOQ$. Similarly, $\norm{((1\ten f)\star\mu_i-\mu_i\star(f\ten 1))\star F}\rightarrow 0$.

Now, if $\h{\G}$ is weakly amenable, then by \cite[Theorem 5.15]{KR} there exists an approximate identity $(f_j)$ for $\LOQ$ in $\mc{Z}(\LOQ)$ such that $\sup_j\norm{f_j}_{cb}<\infty$. The tensor square of the canonical complete contraction $C_u(\G)^*\rightarrow M_{cb}^l(\LOQ)$ allows us to view $\mu_i\in M_{cb}^l(\LOQ)\hten  M_{cb}^l(\LOQ)$ with $\norm{\mu_i}_{ M_{cb}^l(\LOQ)\hten M_{cb}^l(\LOQ)}\leq 1$ for all $i$. By the universal property of the operator space projective tensor product, we may also view each $\mu_i$ as an element of $\mc{CB}(M_{cb}^l(\LOQ)\hten L^1_{cb}(\G))$ by right multiplication, as well as in $\mc{CB}(L^1_{cb}(\G)\hten M_{cb}^l(\LOQ))$ by left multiplication. Moreover,
$$\norm{\mu_i}_{\mc{CB}(M_{cb}^l(\LOQ)\hten L^1_{cb}(\G))},\norm{\mu_i}_{\mc{CB}(L^1_{cb}(\G)\hten M_{cb}^l(\LOQ))}\leq\norm{\mu_i}_{ M_{cb}^l(\LOQ)\hten M_{cb}^l(\LOQ)}\leq 1.$$
Define $\mu_{ij}:=(1\ten f_j)\star\mu_i\star(f_j\ten 1)$. Then $\mu_{ij}\in L^1_{cb}(\G)\hten L^1_{cb}(\G)$ with 
$$\norm{\mu_{ij}}_{L^1_{cb}(\G)\hten L^1_{cb}(\G)}\leq\norm{f_j}_{cb}^2\leq\Lambda_{cb}(\h{\G})^2.$$
Given $f\in\LOQ$, for each $j$ we have
\begin{align*}\lim_i f\star \mu_{ij}-\mu_{ij}\star f
&=\lim_i(f\ten f_j)\star \mu_i\star(f_j\ten 1)-(1\ten f_j)\star \mu_i\star(f_j\ten f)\\
&=\lim_i(f\ten f_j^2)\star \mu_i-\mu_i\star(f_j^2\ten f)\\
&=0\end{align*}
in $\LOQ\hten\LOQ$ and therefore in $L^1_{cb}(\G)\hten L^1_{cb}(\G)$. Furthermore, 
\begin{align*}\lim_j \lim_i m_{L^1_{cb}(\G)}(\mu_{ij})\star f&=\lim_j \lim_i m_{L^1_{cb}(\G)}((1\ten f_j)\star \mu_i\star(f_j\ten 1))\star f\\
&=\lim_j \lim_i m_{\LOQ}((1\ten f_j)\star \mu_i\star(f_j\ten f))\\
&=\lim_j \lim_i m_{\LOQ}(\mu_i\star(f^2_j\ten f))\\
&=\lim_j \lim_i m_{C_u(\G)^*}(\mu_i)m_{\LOQ}(f^2_j\ten f)\\
&=\lim_j m_{\LOQ}(f^2_j\ten f)\\
&=f,\end{align*}
where the $4^{th}$ equality follows from the fact that $f_j\in\mc{Z}(\LOQ)$. Combining the iterated limit into a single net as in Proposition \ref{p:bicrossed}, we obtain a bounded approximate diagonal $(\mu_I)$ in $L^1_{cb}(\G)\hten L^1_{cb}(\G)$ with $\norm{\mu_I}_{L^1_{cb}(\G)\hten L^1_{cb}(\G)}\leq\Lambda_{cb}(\h{\G})^2$. 
\end{proof}


\begin{remark} There is a corresponding statement for the closure of $\LOQ$ in $M_{cb}^r(\LOQ)$. It is proved in the exact same way using the fact that $f\star\Gam_u^*(\mu)=f$ for all $f\in\LOQ$, which is easily verified.\end{remark}

\begin{examples}${}$\vskip5pt
\begin{enumerate}[label=(\roman*)]
\item It was shown in \cite[Theorem 2.7]{FRS} that $A_{cb}(G)$ is operator amenable for any weakly amenable discrete group $G$ such that $C^*(G)$ is residually finite-dimensional. There are examples of weakly amenable residually finite groups (e.g. $G=SL(2,\Z[\sqrt{2}])$) for which $C^*(G)$ is not residually finite-dimensional \cite{Bekka}. Since residually finite groups have Kirchberg's factorization property, Theorem \ref{t:OA} is new even for this class of discrete groups. 
\item When $\h{\G}$ is an amenable unimodular discrete quantum group, we recover Ruan's result on the operator amenability of $L^1(\G)=L^1_{cb}(\G)$ \cite[Theorem 3.5]{Ru4}. 
\item Using results from \cite{BCV} and \cite{BD}, it was shown in \cite{BW} that the discrete duals $\widehat{O_N^+}$ and $\widehat{U_N^+}$ of the free orthogonal and unitary quantum groups have Kirchberg's factorization property for $N\neq 3$. Since $\widehat{O_N^+}$ and $\widehat{U_N^+}$ are always weakly amenable with Cowling--Haagerup constant 1 \cite{F}, we have $OA(L^1_{cb}(O_N^+))=OA(L^1_{cb}(U_N^+))=1$ for all $N\neq 3$. 
\item If $\G_1$ and $\G_2$ are compact quantum groups with Kirchberg's factorization property and $\Lambda_{cb}(\h{\G_1})=\Lambda_{cb}(\h{\G_2})=1$, then $\G=\G_1\ast\G_2$ also has the factorization property \cite{BD} and $\Lambda_{cb}(\h{\G})=1$ \cite{F2}, so $OA(L^1_{cb}(\G))=1$.
\end{enumerate}
\end{examples}

\begin{remark} It would be interesting to find an example of a unimodular discrete quantum group $\h{\G}$ with Kirchberg's factorization property for which equality of the constants in Theorem \ref{t:OA} does not hold.\end{remark} 

\section{Decomposability}

For a locally compact group $G$, it is well-known that $B(G)=\Mcb$ completely isometrically whenever $G$ is amenable \cite[Corollary 1.8]{dH}. We now generalize this implication to arbitrary locally compact quantum groups. Moreover, we show that the corresponding complete isometry is a weak*-weak* homeomorphism.

By \cite[\S6]{K}, the universal co-representation $\W_{\G}=(\id\ten\pi_{\h{\G}})(\mathbb{W}_{\G})\in M(C_u(\G)\ten_{\min} C_0(\h{\G}))$ from the proof of Proposition \ref{p:qs}  satisfies
\begin{enumerate}[label=(\roman*)]
\item $\hat{\lm}_u(\h{f})=(\id\ten\h{f})(\W_{\G}^*)$ for all $\h{f}\in\LOQHs$,
\item $(\id\ten\pi_{\G})\circ\Gam_u(x)=\W_{\G}^*(1\ten\pi_{\G}(x))\W_{\G}$ for all $x\in C_u(\G)$,
\end{enumerate}
where $\hat{\lm}_u$ is the embedding of $\LOQHs$ into $C_u(\G)$, and $\pi_{\G}:C_u(\G)\rightarrow C_0(\G)$ is the (unique) extension of $\hat{\lm}:\LOQHs\rightarrow C_0(\G)$ \cite[Proposition 5.1]{K}. Moreover,
\begin{equation}\label{C_u density}C_u(\G)=\overline{\text{span}\{(\id\ten\h{f})(\W_{\G}) : \h{f}\in\LOQH\}}^{\norm{\cdot}_u}.\end{equation}

We will need the following representation of $\Theta^l(\mu)$ for $\mu\in C_u(\G)^*$, which may be found in \cite[Theorem 6.1]{D}. We present the proof for the convenience of the reader.

\begin{lem}\label{l:lemma} For $\mu\in C_u(\G)^*$, 
$$\Theta^l(\mu)(x)=(\mu\ten\id)\W_{\G}^*(1\ten x)\W_{\G}, \ \ \ x\in\LIQ.$$\end{lem}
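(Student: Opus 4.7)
The plan is to verify the identity by pairing both sides against an arbitrary $f\in\LOQ$ and reducing to property (ii) of the universal co-representation, namely $(\id\ten\pi_\G)\Gam_u(x) = \W_\G^*(1\ten\pi_\G(x))\W_\G$ for $x\in C_u(\G)$. By construction $\Theta^l(\tilde{\lm}(\mu)) = (m_{\tilde{\lm}(\mu)}^l)^*$, and the multiplier $\tilde{\lm}(\mu)$ acts on $\LOQ$ as left multiplication by $\mu$ in $C_u(\G)^*$, which sends $\LOQ$ into itself since $\LOQ$ is a closed two-sided ideal. Hence for $y\in\LIQ$ and $f\in\LOQ$ one has $\langle\Theta^l(\mu)(y),f\rangle = \langle y,\mu\star f\rangle$, and it suffices to prove
\begin{equation*}
\langle y,\mu\star f\rangle = \langle f,(\mu\ten\id)\W_\G^*(1\ten y)\W_\G\rangle.
\end{equation*}

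First I would establish this scalar identity on the norm-dense subspace $C_0(\G)\subseteq\LIQ$. Writing $y=\pi_\G(x)$ for $x\in C_u(\G)$ and viewing $f\in\LOQ\subseteq M(\G)\subseteq C_u(\G)^*$ via $\pi_\G^*$, the product in $C_u(\G)^*$ satisfies $\langle\mu\star f,x\rangle_{C_u(\G)} = \langle\mu\ten f,\Gam_u(x)\rangle$. Applying $\id\ten\pi_\G$ to the second leg of $\Gam_u(x)$ and invoking property (ii) converts this into $\langle\mu\ten f,\W_\G^*(1\ten y)\W_\G\rangle = \langle f,(\mu\ten\id)\W_\G^*(1\ten y)\W_\G\rangle$, while the ideal property $\mu\star f\in\LOQ$ ensures the left-hand side equals $\langle y,\mu\star f\rangle_{\LIQ}$. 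This gives the identity for all $y\in C_0(\G)$.

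Next I would extend to all $y\in\LIQ$ by $\sigma$-weak continuity. The left-hand side $y\mapsto\langle y,\mu\star f\rangle$ is $\sigma$-weakly continuous because $\mu\star f\in\LOQ = \LIQ_*$. The right-hand side is $\sigma$-weakly continuous because $y\mapsto\W_\G^*(1\ten y)\W_\G$ is an adjoint action by the unitary $\W_\G$ and therefore normal on each leg, and subsequent slicing with $\mu$ on the first leg followed by pairing with $f\in\LIQ_*$ on the second preserves $\sigma$-weak continuity. Since $C_0(\G)$ is $\sigma$-weakly dense in $\LIQ$, the identity extends to all of $\LIQ$, yielding the lemma.

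The main obstacle I anticipate is bookkeeping across the several algebras and embeddings involved ($C_u(\G)$, $C_0(\G)$, $\LIQ$, $M(\G)$, $C_u(\G)^*$) and keeping the slice pairings straight. In particular, one should verify a posteriori that the operator $(\mu\ten\id)\W_\G^*(1\ten y)\W_\G$ genuinely lies in $\LIQ$ rather than merely in $\BLTQ$; this is automatic once the identification with $\Theta^l(\mu)(y)\in{}_{\LOQ}\mc{CB}^\sigma(\LIQ)$ is established by the scalar identity above.
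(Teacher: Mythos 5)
Your proof is correct and follows essentially the same route as the paper: pair both sides against $f\in\LOQ$, lift to the universal level via $\pi_{\G}$ and the identity $(\id\ten\pi_{\G})\circ\Gam_u(x)=\W_{\G}^*(1\ten\pi_{\G}(x))\W_{\G}$, then extend from $C_0(\G)$ to $\LIQ$ by weak* density. The only cosmetic difference is that the paper runs the computation on the dense subspace $\hat{\lm}(\LOQHs)$ of $C_0(\G)$ rather than on arbitrary $\pi_{\G}(x)$, $x\in C_u(\G)$.
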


\begin{proof} First let $x=\hat{\lm}(\hat{f})\in C_0(\G)$ for some $\hat{f}\in\LOQHs$. Then, for all $f\in\LOQ$,
\begin{align*}\la\Theta^l(\mu)(x),f\ra&=\la x,m^l_\mu(f)\ra=\la\hat{\lm}(\hat{f}),\mu\star f\ra\\
                                 &=\la\pi_{\G}\circ\hat{\lm}_u(\hat{f}),\mu\star f\ra=\la\hat{\lm}_u(\hat{f}),\mu\star_u \pi_{\G}^*(f)\ra\\
                                 &=\la\Gam_u(\hat{\lm}_u(\hat{f})),\mu\ten \pi_{\G}^*(f)\ra=\la(\id\ten\pi_{\G})(\Gam_u(\hat{\lm}_u(\hat{f}))),\mu\ten f\ra\\
                                 &=\la\W_{\G}^*(1\ten\pi_{\G}(\hat{\lm}_u(\hat{f})))\W_{\G},\mu\ten f\ra=\la\W_{\G}^*(1\ten x)\W_{\G},\mu\ten f\ra\\
                                 &=\la(\mu\ten\id)\W_{\G}^*(1\ten x)\W_{\G},f\ra.\end{align*}
As $\hat{\lm}(\LOQHs)$ is norm dense in $C_0(\G)$, and since $C_0(\G)$ is weak* dense in $\LIQ$, the result follows.\end{proof}

Recall that $\tilde{\lm}:C_u(\G)^*\rightarrow\McbQl$ is the map taking $\mu\in C_u(\G)^*$ to the operator of left multiplication by $\mu$ on $\LOQ$.

\begin{thm}\label{B(G)=McbA(G)} Let $\G$ be a locally compact quantum group. If $\h{\G}$ is amenable then $\tilde{\lm}:C_u(\G)^*\rightarrow\McbQl$ is a weak*--weak* homeomorphic completely isometric isomorphism.\end{thm}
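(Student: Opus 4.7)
The strategy is to combine the explicit representation of $\tilde{\lm}(\mu)$ from Lemma \ref{l:lemma} with the amenability hypothesis on $\h{\G}$, which provides both the completely isometric identification of $C_u(\G)^*$ with a space of completely bounded module maps and the weak*-control needed to upgrade to a homeomorphism.

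First, Lemma \ref{l:lemma} and the completely isometric identification $\Theta^l : \McbQl \cong \ _{\LOQ}\mc{CB}^\sigma(\LIQ)$ jointly give the concrete formula
\[
\Theta^l(\tilde{\lm}(\mu))(x) = (\mu\ten\id)\W_{\G}^*(1\ten x)\W_{\G}, \ \ \ \mu\in C_u(\G)^*, \ x\in\LIQ,
\]
so that $\tilde{\lm}$ is at once injective and completely contractive. For the reverse norm estimate and surjectivity, I would invoke the right-handed statement of \cite[Proposition 5.10]{C}, which under amenability of $\h{\G}$ yields $\McbQr \cong C_u(\G)^*$ completely isometrically, and transport it to the left-handed version using the universal unitary antipode. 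Since $R$ is a $*$-anti-automorphism of $C_u(\G)$, the induced map $\mu\mapsto\mu\circ R$ is a complete isometry on $C_u(\G)^*$, and it matches with a complete isometric antipodal correspondence $\McbQr \leftrightarrow \McbQl$ (coming from the identity $T_\mu^l \circ R_* = R_* \circ T_{\mu\circ R}^r$ on $\LOQ$), yielding $C_u(\G)^* \cong \McbQl$ completely isometrically through $\tilde{\lm}$.

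Next, for the weak*--weak* homeomorphism, using the predual description
\[
\la \hat{b}, \Om_{A,\rho}\ra = \la(\Theta^l(\hat{b})\ten\id_{\mc{K}(\ell^2)})(A),\rho\ra, \ \ \ A\in C_0(\G)\ten_{\min}\mc{K}(\ell^2), \ \rho\in\LOQ\hten\mc{T}(\ell^2),
\]
together with the formula from Lemma \ref{l:lemma}, the pairing rewrites as
\[
\la \tilde{\lm}(\mu), \Om_{A,\rho}\ra = \la\mu, (\id\ten\rho)(\W_{\G,12}^* A_{23}\W_{\G,12})\ra.
\]
Using the density (\ref{C_u density}) and standard slice-map arguments, the slice $(\id\ten\rho)(\W_{\G,12}^* A_{23}\W_{\G,12})$ defines a bona fide element of $C_u(\G)$, which shows $\tilde{\lm}$ is weak*--weak* continuous. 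Since $\tilde{\lm}$ is a bijective isometry of dual spaces with weak*-continuous direction, its pre-adjoint is automatically a bijection between the preduals, and its dual is $\tilde{\lm}^{-1}$; hence $\tilde{\lm}^{-1}$ is weak*-continuous as well, completing the homeomorphism claim.

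\textbf{Main obstacle.} The principal technical hurdle is verifying that $(\id\ten\rho)(\W_{\G,12}^* A_{23}\W_{\G,12})$ actually belongs to $C_u(\G)$ rather than merely $M(C_u(\G))$; this is essential for weak*-continuity against the canonical predual $C_u(\G)$ of $C_u(\G)^*$. It will require reducing to rank-one $A$ via (\ref{C_u density}), invoking property $(\mathrm{ii})$ of $\W_\G$ from \cite[\S6]{K} to rewrite $\W_\G^*(1\ten\pi_\G(c))\W_\G = (\id\ten\pi_\G)\Gam_u(c)$, and then using that $\LOQ$ is a closed two-sided ideal of $C_u(\G)^*$ to ensure the resulting convolution remains in $C_u(\G)$. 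The antipode transport for the surjectivity step is a secondary but non-trivial bookkeeping matter requiring one to match the anti-multiplicative structure on both sides.
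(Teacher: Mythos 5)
There is a genuine gap at the heart of your argument: the complete isometry. You derive it by citing ``the right-handed statement of \cite[Proposition 5.10]{C}, which under amenability of $\h{\G}$ yields $\McbQr\cong C_u(\G)^*$ completely isometrically,'' and then transporting via the antipode. But \cite[Proposition 5.10]{C} only gives the \emph{surjectivity} of $\tilde{\lm}$ (an isomorphism of Banach algebras); the completely isometric identification is precisely the new content of the present theorem, which the paper explicitly describes as a ``strengthening'' of that proposition. So this step begs the question. The paper's actual route to the isometry is substantially harder: one first shows that $\Theta^l:C_u(\G)^*_+\rightarrow\ _{\LOQ}\mc{CP}^\sigma(\LIQ)$ is a \emph{complete} order bijection (via Lemma \ref{l:lemma}, the operator $\h{P}$ implementing the dual scaling group, and density of $\{\om_{\xi,\eta}:\xi\in\mc{D}(\h{P}^{1/2}),\eta\in\mc{D}(\h{P}^{-1/2})\}$ in $\LOQHs$); one then introduces a decomposability norm $\norm{\cdot}_{L^1dec}$ on $_{\LOQ}\mc{CB}^\sigma(\LIQ)$, proves $\norm{(\Theta^l)^n([\mu_{ij}])}_{L^1dec}=\norm{[\mu_{ij}]}_{dec}=\norm{[\mu_{ij}]}_{cb}$ using strict extensions and injectivity of $M_n$, and finally uses the $1$-injectivity of $\LIQ$ in $\LOQ\hskip2pt\mathbf{mod}$ (the homological consequence of amenability of $\h{\G}$) to identify $\norm{\cdot}_{L^1dec}$ with $\norm{\cdot}_{cb}$ on the module-map side. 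None of this is captured by an antipodal transport of an already-known isometry, because no such isometry is known beforehand on either side.

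Your treatment of the weak*--weak* homeomorphism is a legitimate alternative in outline, but it is also left incomplete at exactly the point you flag: you would need to verify that $(\id\ten\rho)(\W_{\G,12}^*A_{23}\W_{\G,12})$ lies in $C_u(\G)$ and not merely in $M(C_u(\G))$, and the reduction you sketch (rank-one $A$, property (ii) of $\W_\G$, ideal property of $\LOQ$ in $C_u(\G)^*$) does not obviously close this, since $c\star\pi_\G^*(f)$ for $c\in C_u(\G)$, $f\in\LOQ$ a priori lands only in $M(C_u(\G))$. The paper sidesteps this entirely: since the map is bounded, by Krein--\v{S}mulian it suffices to check weak* convergence of $\Theta^l(\mu_i)$ against the dense set $\hat{\lm}(\LOQH)\subseteq C_0(\G)$ for bounded nets $(\mu_i)$, which follows from separate weak* continuity of multiplication in the dual Banach algebra $C_u(\G)^*$ together with the identification $_{\LOQ}\mc{CB}(C_0(\G),\LIQ)=(\QcbQl)^*$. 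Your final deduction that a weak*-continuous bijective isometry of dual spaces has weak*-continuous inverse is correct. I would recommend adopting the paper's bounded-net argument for the continuity and supplying a genuine proof of the isometry along the decomposability-norm lines; as it stands, the proposal establishes only that $\tilde{\lm}$ is an injective complete contraction.
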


\begin{proof} Amenability of $\h{\G}$ entails the surjectivity of $\tilde{\lm}$ from (the left version of) \cite[Proposition 5.10]{C}. For simplicity, throughout the proof we denote by $\Theta^l(\mu)$ the map $\Theta^l(\tilde{\lm}(\mu))$ for $\mu\in C_u(\G)^*$.

In \cite[Theorem 5.2]{D} Daws shows that $\Theta^l:C_u(\G)^*_+\rightarrow \ _{\LOQ}\mc{CP}^\sigma(\LIQ)$ is an order bijection. We show that it is a complete order bijection. To this end, let $[\mu_{ij}]\in M_n(C_u(\G)^*)^+$. By Lemma \ref{l:lemma}
$$\Theta^l(\mu_{ij})(x)=(\id\ten\mu_{ij})\W_{\G}^*(1\ten x)\W_{\G}, \ \ \ x\in\LIQ.$$
Thus, for any $x_1,..,x_m\in\LIQ$ we have
\begin{align*}((\Theta^l)^n([\mu_{ij}]))^m([x_k^*x_l])&=[(\mu_{ij}\ten\id)\W_{\G}^*(1\ten x_k^*x_l)\W_{\G}]\\
&=[(\mu_{ij}\ten\id)]^m([\W_{\G}^*(1\ten x_k^*x_l)\W_{\G}])\geq0.\end{align*}
It follows that $(\Theta^l)^n([\mu_{ij}])\in\mc{CP}(\LIQ,M_n(\LIQ))$.

On the other hand, suppose $[\mu_{ij}]\in M_n(C_u(\G)^*)$ such that 
$$(\Theta^l)^n([\mu_{ij}])\in\mc{CP}(\LIQ,M_n(\LIQ)).$$
Let $\h{P}$ denote the positive operator implementing the scaling group $\hat{\tau}$ on $\h{\G}$, via $\hat{\tau}_t(\hat{x})=\h{P}^{it}\hat{x}\h{P}^{-it}$, $\hat{x}\in\LIQH$. Using \cite[Proposition 6.1]{D3}, for $\xi_1,...\xi_m\in\mc{D}(\h{P}^{1/2})$, $\eta_1,...,\eta_m\in\mc{D}(\h{P}^{-1/2})$, and $[z_{ik}]\in\C^{nm}$,
\begin{align*}
\la[\mu_{ij}]^m([\lm_u(\om_{\xi_k,\eta_k})^*\lm_u(\om_{\xi_l,\eta_l})])[z_{ik}],[z_{ik}]\ra
&=\sum_{i,j=1}^n\sum_{k,l=1}^m\overline{z_{ik}}z_{jl}\la \mu_{ij},\lm_u(\om_{\xi_k,\eta_k})^*\lm_u(\om_{\xi_l,\eta_l})\ra\\
&=\overline{\sum_{i,j=1}^n\sum_{k,l=1}^mz_{ik}\overline{z_{jl}}\la \mu_{ij}^*,\lm_u(\om_{\xi_l,\eta_l})\lm_u(\om_{\xi_k,\eta_k})^*\ra}\\
&=\overline{\sum_{i,j=1}^n\sum_{k,l=1}^mz_{ik}\overline{z_{jl}}\la \Theta^l(\mu_{ij})(\xi_l\xi_k^*)\eta_k,\eta_l\ra}\\
&=\overline{\sum_{k,l=1}^m \la[\Theta^l(\mu_{ij})(\xi_l\xi_k^*)]y_k,y_l\ra}\geq0,\end{align*}
where $y_k=[z_{1k}\eta_k \ \cdots \ z_{nk}\eta_k]^T\in\LTQ^n$ for $1\leq k\leq m$. By density of $\{\om_{\xi,\eta}\mid \xi\in\mc{D}(\h{P}^{1/2}), \eta\in\mc{D}(\h{P}^{-1/2})\}$ in $\LOQHs$ \cite[Lemma 3]{DS}, it follows that $[\mu_{ij}]\in M_n(C_u(\G)^*)^+$.

We now show that $\tilde{\lm}$ is a complete isometry. To do so we introduce a decomposability norm on $_{\LOQ}\mc{CB}^{\sigma}(\LIQ)$, given by
$$\norm{\Phi}_{L^1dec}:=\inf\bigg\{\max\{\norm{\Psi_1}_{cb},\norm{\Psi_2}_{cb}\}\mid \begin{bmatrix}\Psi_1 & \Phi\\ \Phi^* & \Psi_2\end{bmatrix}\geq_{cp}0\bigg\},$$
where $\Psi_i\in \ _{\LOQ}\mc{CP}^{\sigma}(\LIQ)$. It is evident that $\norm{\cdot}_{L^1dec}$ is a norm on $_{\LOQ}\mc{CB}^{\sigma}(\LIQ)$. That $\norm{\Phi}_{cb}\leq\norm{\Phi}_{L^1dec}$ for all $\Phi\in \ _{\LOQ}\mc{CB}^{\sigma}(\LIQ)$ follows verbatim from the first part of \cite[Lemma 5.4.3]{ER}. In a similar fashion we obtain a decomposable norm on 
$$M_n(_{\LOQ}\mc{CB}^{\sigma}(\LIQ)= \ _{\LOQ}\mc{CB}^{\sigma}(\LIQ,M_n(\LIQ)).$$
Since $\Theta^l$ is a completely positive contraction from $(C_u(\G)^*)^+$ onto $_{\LOQ}\mc{CP}^{\sigma}(\LIQ)$, one easily sees that
$$\norm{(\Theta^l)^n([\mu_{ij}])}_{L^1dec}\leq\norm{[\mu_{ij}]}_{dec}, \ \ \ [\mu_{ij}]\in M_n(C_u(\G)^*),$$
where $\norm{\cdot}_{dec}$ is the standard decomposable norm for maps between $C^*$-algebras. 

Conversely, if 
$\norm{(\Theta^l)^n([\mu_{ij}])}_{dec}<1$ then there exist 
$$\Psi_1,\Psi_2\in \  _{\LOQ}\mc{CP}^{\sigma}(\LIQ,M_n(\LIQ))$$
such that $\norm{\Psi_1}_{cb},\norm{\Psi_2}_{cb}<1$, and
$$\begin{bmatrix}\Psi_1 & (\Theta^l)^n([\mu_{ij}])\\ (\Theta^l)^n([\mu_{ij}])^* & \Psi_2\end{bmatrix}\geq_{cp}0.$$
Since $(\Theta^l)^n$ is a complete order bijection there exist $[\nu^k_{ij}]\in M_n(C_u(\G)^*)^+=\mc{CP}(C_u(\G),M_n)$ such that $\Psi_k=(\Theta^l)^n([\nu^k_{ij}])$, $k=1,2$, and 
$$\begin{bmatrix}[\nu^1_{ij}] & [\mu_{ij}]\\ [\mu_{ij}]^* & [\nu^2_{ij}]\end{bmatrix}\geq_{cp}0.$$
It follows that $[\nu^k_{ij}]$ is a strictly continuous completely positive map $C_u(\G)\rightarrow M_n$, and therefore admits a unique extension to a completely positive map $\widetilde{[\nu^k_{ij}]}:M(C_u(\G))\rightarrow M_n$ which is strictly continuous on the unit ball \cite[Corollary 5.7]{Lance}. By uniqueness, $\widetilde{[\nu^k_{ij}]}=[\tilde{\nu}^k_{ij}]$, where $\tilde{\nu}^k_{ij}$ is the unique strict extension of the functional $\nu^k_{ij}$. Thus, by completely positivity
$$\norm{[\nu^k_{ij}] }_{cb}=\norm{\widetilde{[\nu^k_{ij}]}(1_{M(C_u(\G))})}=\norm{[\tilde{\nu}^k_{ij}(1_{M(C_u(\G))})]} =\norm{\Psi_k(1)}=\norm{\Psi_k}_{cb}<1,$$
so that $\norm{[\mu_{ij}]}_{dec}<1$. Therefore
$$\norm{(\Theta^l)^n([\mu_{ij}])}_{L^1dec}=\norm{[\mu_{ij}]}_{dec}.$$
However, $\norm{[\mu_{ij}]}_{dec}=\norm{[\mu_{ij}]}_{cb}$ by injectivity of $M_n$ (see \cite[Lemma 5.4.3]{ER}), so that
$$\norm{(\Theta^l)^n([\mu_{ij}])}_{L^1dec}=\norm{[\mu_{ij}]}_{cb}, ,\ \ \ [\mu_{ij}]\in M_n(C_u(\G)^*).$$


Now, amenability of $\h{\G}$ entails the the 1-injectivity of $\LIQ$ in $\LOQ\hskip2pt\mathbf{mod}$ by the left version of \cite[Theorem 5.1]{C}. The matricial analogues of the proofs of \cite[Proposition 5.5, Lemma 5.7]{C} show that 
$$\norm{(\Theta^l)^n([\mu_{ij}])}_{cb}=\norm{(\Theta^l)^n([\mu_{ij}])}_{L^1dec}=\norm{[\mu_{ij}]}_{cb}.$$
Hence, $\Theta^l:C_u(\G)^*\rightarrow \ _{\LOQ}\mc{CB}^\sigma(\LIQ)$ is a completely isometric isomorphism. To prove that $\Theta^l$ is a weak*-weak* homeomorphism, it suffices to show that it is weak* continuous on bounded sets. Let $(\mu_i)$ be a bounded net in $C_u(\G)^*$ converging weak* to $\mu$. Since $C_u(\G)^*$ is a dual Banach algebra \cite[Lemma 8.2]{D4}, multiplication is separately weak* continuous. Hence, for $\hat{f}\in\LOQH$ and $f\in\LOQ$,
$$\la\Theta^l(\mu_i)(\hat{\lm}(\hat{f})),f\ra=\la\hat{\lm}_u(\hat{f}),\mu_i\star_u \pi_{\G}^*(f)\ra\rightarrow\la\hat{\lm}_u(\hat{f}),\mu\star_u \pi_{\G}^*(f)\ra
=\la\Theta^l(\mu_i)(\hat{\lm}(\hat{f})),f\ra.$$
The density of $\hat{\lm}(\LOQH)$ in $C_0(\G)$ and boundedness of $\Theta^l(\mu_i)$ in $_{\LOQ}\mc{CB}(C_0(\G),\LIQ)=(Q_{cb}^l(
\G))^*$ (see \cite[Proposition 5.8]{C}) establish the claim.
\end{proof}

\begin{remark} We note that the conclusion of Theorem \ref{B(G)=McbA(G)} was obtained under the a priori stronger assumption that $\G$ is co-amenable \cite[Theorem 4.2]{HNR2}.\end{remark}

\begin{cor} Let $\G$ be a locally compact quantum group such that $\h{\G}$ has the approximation property. Then $\G$ is co-amenable if and only if $\h{\G}$ is amenable.\end{cor}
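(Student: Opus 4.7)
The forward direction, $\G$ co-amenable implies $\h{\G}$ amenable, is standard (Bédos--Tuset \cite{BT}) and does not require the approximation property hypothesis. The converse is where the approximation property enters. Assuming $\h{\G}$ is amenable and has the approximation property, the plan is to produce a bounded left approximate identity in $\LOQ$, whence $\G$ is co-amenable by \cite[Theorem 3.1]{BT}.

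The starting point is Theorem \ref{B(G)=McbA(G)}: amenability of $\h{\G}$ gives a weak*--weak* homeomorphic completely isometric isomorphism $\tilde{\lm}:C_u(\G)^*\to\McbQl$. Since both sides are unital Banach algebras and $\tilde{\lm}$ extends the inclusion $\LOQ\hookrightarrow\McbQl$, the counit $\epsilon\in C_u(\G)^*$ maps to the unit $1\in\McbQl$. The approximation property of $\h{\G}$ provides a net $(f_i)\subseteq\LOQ$ with $\Theta^l(\lm(f_i))\to\id_{\LIQ}$ in the stable point-weak* topology on $_{\LOQ}\mc{CB}^\sigma(\LIQ)$. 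Using the explicit description of $\QcbQl=\{\Om_{A,\rho}\}$ from the preliminaries, this is precisely weak* convergence $\lm(f_i)\to 1$ in $\McbQl=(\QcbQl)^*$. The uniform boundedness principle applied to the Banach space $\QcbQl$ then yields $\sup_i\norm{\lm(f_i)}_{cb}<\infty$, and by the homeomorphism of Theorem \ref{B(G)=McbA(G)}, $(f_i)$ is bounded in $C_u(\G)^*$ with $f_i\to\epsilon$ weak*.

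Next, I would exploit that $C_u(\G)^*$ is a dual Banach algebra (predual $C_u(\G)$, multiplication dual to $\Gamma_u$) and hence has separately weak*-continuous multiplication. Thus for each $g\in\LOQ$, $f_i\star g\to\epsilon\star g=g$ weak* in $C_u(\G)^*$. Since $\LOQ$ is a norm-closed two-sided ideal in $C_u(\G)^*$, the net $(f_i\star g)$ lies in $\LOQ$ and is bounded there. The weak* convergence in $C_u(\G)^*$ restricts, via the embedding $\LOQ\hookrightarrow M(\G)\xhookrightarrow{\pi_{\G}^*} C_u(\G)^*$, to pointwise convergence against $C_0(\G)\subseteq\LIQ$. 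Boundedness of $(f_i\star g)$ in $\LOQ=(\LIQ)_*$ combined with weak*-density of $C_0(\G)$ in $\LIQ$ then upgrades this to weak convergence $f_i\star g\to g$ in $\LOQ$.

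Finally, a standard Mazur/convexity argument, indexing over finite subsets $F\subseteq\LOQ$ together with $\varepsilon>0$, extracts from the weak convergence on each $g\in F$ a bounded net of convex combinations of the $f_i$ that forms a left approximate identity for $\LOQ$ in norm. By \cite[Theorem 3.1]{BT}, $\G$ is co-amenable. The main technical step is the identification in Step 1 of the stable point-weak* topology with the intrinsic weak* topology on $\McbQl$, which is essentially tautological given the description of $\QcbQl$; the rest is a concatenation of standard weak*/weak density and convexity arguments.
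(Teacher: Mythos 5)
There is a genuine gap at the boundedness step, and it is fatal to your approach. The net $(f_i)$ witnessing the approximation property of $\h{\G}$ converges to $\id_{\LIQ}$ in the stable point-weak* topology, i.e.\ $\lm(f_i)\to 1$ in $\sigma(\McbQl,\QcbQl)$ — that identification is fine — but nothing forces this net to be bounded in $\McbQl$. The uniform boundedness principle does not apply here: a weak*-convergent \emph{net} in a dual Banach space need not be norm bounded (pointwise convergence of a net does not give pointwise boundedness, which is the hypothesis UBP actually needs). Worse, if your step were valid it would show that the approximation property yields a net in $\LOQ$ bounded in the cb-multiplier norm and converging weak* to $1$; combined with your subsequent Mazur argument this would prove that the approximation property implies weak amenability of $\h{\G}$, which is false in general — the whole point of the approximation property is that it drops the boundedness requirement. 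So the argument cannot be repaired along these lines; once you lose boundedness you cannot run the convexity/Mazur step, and you are left only with $\sigma(\McbQl,\QcbQl)$-density statements that do not produce a bounded approximate identity in $\LOQ$.

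The paper's proof uses the approximation property in an entirely different way, precisely to sidestep this issue. There, the stable approximate identity and the w*OAP of $\LIQ$ (hence the dual slice map property) are used only to prove that the induced multiplication $\tilde{m}:\LOQ\hten_{\LOQ}C_0(\G)\to C_0(\G)$ is \emph{injective}. Amenability of $\h{\G}$ then enters through Theorem \ref{B(G)=McbA(G)}, giving $C_u(\G)\cong\LOQ\hten_{\LOQ}C_0(\G)$, and through $1$-flatness of $\LOQ$, which kills $\LOQ\hten_{\LOQ}\mathrm{Ker}(\pi_\G)$ and shows $\tilde{m}$ is a complete quotient map. Together these make $C_0(\G)$ an induced $\LOQ$-module, so $M(\G)\cong{}_{\LOQ}\mc{CB}(C_0(\G),\LIQ)$, and the element corresponding to the inclusion $C_0(\G)\hookrightarrow\LIQ$ is a left unit for $M(\G)$, which yields co-amenability without ever producing a bounded net in $\LOQ$ directly from the AP net. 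If you want to salvage your write-up, you need to replace the boundedness/Mazur portion with an argument of this homological type.
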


\begin{proof} Assuming $\h{\G}$ has the AP, there exists a stable approximate identity $(f_i)$ for $\LOQ$. Moreover, as noted in the proof of Proposition \ref{p:AP}, $\LIQ$ has the w*OAP, and therefore the dual slice map property (see \cite[Theorem 11.2.5]{ER}). Any operator space is a complete quotient of the space of trace class operators for some Hilbert space \cite[Corollary 3.2]{Blech}, so let $H$ be a Hilbert space such that $\Th\twoheadrightarrow C_0(\G)$. Then $\LOQ\hten\Th\twoheadrightarrow\LOQ\hten C_0(\G)$ by projectivity of $\hten$, and $\LIQ\oten M(\G)\hookrightarrow\LIQ\oten\BH$ is a weak*-weak* continuous complete isometry. Hence,
$$\Theta^l(f_i)\ten\id_{M(\G)}(X)\rightarrow X,$$
weak* for all $X\in \LIQ\oten M(\G)$, so that
$$\Theta^l(f_i)_*\ten\id_{C_0(\G)}(A)\rightarrow A$$
weakly for all $A\in\LOQ\hten C_0(\G)$. By the standard convexity argument, we may assume that the net $(f_i)$ satisfies
$$\norm{\Theta^l(f_i)_*\ten\id_{C_0(\G)}(A) - A}\rightarrow0, \ \ \ A\in\LOQ\hten C_0(\G).$$
Consider the multiplication map $m:\LOQ\hten C_0(\G)\rightarrow C_0(\G)$. Let $\tilde{m}:\LOQ\hten_{\LOQ}C_0(\G)\rightarrow C_0(\G)$ denote the induced map and $q:\LOQ\hten C_0(\G)\twoheadrightarrow \LOQ\hten_{\LOQ}C_0(\G)$ denote the quotient map. It follows that $q(f\star A)=q(f\ten m(A))$ for all $f\in\LOQ$ and $A\in\LOQ\hten C_0(\G)$. Thus, if $m(A)=0$, then
$$q(A)=\lim_i q(f_i\star A)=\lim_iq(f_i\ten m(A))=0,$$
so that the induced multiplication $\tilde{m}$ is injective.

Now, assuming $\h{\G}$ is amenable, Theorem \ref{B(G)=McbA(G)} implies that $C_u(\G)^*\cong \  _{\LOQ}\mc{CB}(C_0(\G),\LIQ)$ completely isometrically and weak*-weak* homeomorphically, that is, $C_u(\G)\cong\LOQ\hten_{\LOQ} C_0(\G)$ completely isometrically. By the left version of \cite[Theorem 5.1]{C} $\LOQ$ is 1-flat in $\mathbf{mod}\hskip2pt\LOQ$. Thus, the following sequence is 1-exact
$$0\rightarrow\LOQ\hten_{\LOQ}\mathrm{Ker}(\pi_{\G})\hookrightarrow\LOQ\hten_{\LOQ}C_u(\G)\twoheadrightarrow\LOQ\hten_{\LOQ}C_0(\G)\rightarrow0.$$
Since $f\star x=(\id\ten f\circ\pi_{\G})\circ\Gam_u(x)=(\id\ten f)\W_{\G}^*(1\ten\pi_{\G}(x))\W_{\G}=0$ for all $x\in\mathrm{Ker}(\pi_{\G})$, it follows that $\LOQ\hten_{\LOQ}\mathrm{Ker}(\pi_{\G})=0$. Hence, $\LOQ\hten_{\LOQ}C_u(\G)\cong\LOQ\hten_{\LOQ}C_0(\G)$. Moreover, as
$$\Theta^l(\mu\star f)(x)=\Theta^l(f)(\Theta^l(\mu)(x))=(\Theta^l(\mu)(x))\star f, \ \ \ f\in\LOQ, \ \mu\in C_u(\G)^*, \ x\in C_0(\G),$$
it follows that $C_u(\G)\cong\LOQ\hten_{\LOQ} C_0(\G)$ is an isomorphism of left $\LOQ$-modules, i.e., $C_u(\G)$ is induced. The commutative diagram
\begin{equation*}
\begin{tikzcd}
\LOQ\hten_{\LOQ}C_u(\G) \arrow[r, equal]\arrow[d, equal] &\LOQ\hten_{\LOQ}C_0(\G)\arrow[d, "\tilde{m}"]\\
C_u(\G) \arrow[r, two heads] &C_0(\G)
\end{tikzcd}
\end{equation*}
then implies that $\tilde{m}$ is a complete quotient map. Thus, $C_0(\G)$ is an induced $\LOQ$-module and 
$$M(\G)\cong (\LOQ\hten_{\LOQ}C_0(\G))^*= \ _{\LOQ}\mc{CB}(C_0(\G),\LIQ).$$
The measure corresponding to the inclusion $C_0(\G)\hookrightarrow\LIQ$ is necessarily a left unit for $M(\G)$, which entails the co-amenability of $\G$.

\end{proof}

\section*{Acknowledgements}

The author would like to thank Michael Brannan and Ami Viselter for helpful discussions at various points during this project, as well as the anonymous referee whose valuable comments significantly improved the presentation of the paper. The author was partially supported by the NSERC Discovery Grant 1304873.

\end{spacing}

\end{document}